\documentclass[12pt]{amsart}
\usepackage[pdftex]{graphicx}
\usepackage{amsmath}
\usepackage{amssymb}
\usepackage{amsfonts}
\usepackage{amsthm}
\usepackage{bbm}
\usepackage{url}
\usepackage{mathtools}
\usepackage{ascmac}
\usepackage{caption}
\usepackage{subcaption}
\usepackage{aliascnt}
\usepackage[breaklinks=true,pdftex]{hyperref}
\usepackage[nameinlink]{cleveref}
\usepackage{appendix}
\usepackage{tikz-cd}
\usetikzlibrary{fit}
\date{}
\author{Kan Nagano}
\address{Department of Mathematics, Institute of Science Tokyo, 2-12-1, Ookayama, Meguro-ku, Tokyo, 152-8551, Japan}
\email{nagano.k.ah@m.titech.ac.jp}
\makeatother
\theoremstyle{definition}
\newtheorem{definition}{Definition}[section]

\newaliascnt{example}{definition}
\newtheorem{example}[example]{Example}
\aliascntresetthe{example}

\newaliascnt{construction}{definition}
\newtheorem{construction}[construction]{Construction}
\aliascntresetthe{construction}
\theoremstyle{plain}
\newaliascnt{theorem}{definition}
\newtheorem{theorem}[theorem]{Theorem}
\aliascntresetthe{theorem}

\newaliascnt{lemma}{definition}
\newtheorem{lemma}[lemma]{Lemma}
\aliascntresetthe{lemma}

\newaliascnt{corollary}{definition}
\newtheorem{corollary}[corollary]{Corollary}
\aliascntresetthe{corollary}

\newaliascnt{proposition}{definition}
\newtheorem{proposition}[proposition]{Proposition}
\aliascntresetthe{proposition}

\newaliascnt{conjecture}{definition}
\newtheorem{conjecture}[conjecture]{Conjecture}
\aliascntresetthe{conjecture}

\theoremstyle{remark}
\newaliascnt{remark}{definition}
\newtheorem{remark}[remark]{Remark}
\aliascntresetthe{remark}

\crefname{figure}{Figure}{Figures}
\crefname{remark}{Remark}{Remarks}
\crefname{corollary}{Corollary}{Corollarys}
\crefname{definition}{Definition}{Definitions}
\crefname{theorem}{Theorem}{Theorems}
\crefname{conjecture}{Conjecture}{Conjectures}
\crefname{lemma}{Lemma}{Lemmas}
\crefname{example}{Example}{Examples}
\crefname{proposition}{Proposition}{Propositions}
\crefname{section}{Section}{Sections}
\crefname{appendix}{Appendix}{Appendices}
\crefname{construction}{Construction}{Constructions}

\DeclareMathOperator{\Camb}{Camb}
\DeclareMathOperator{\Tors}{Tors}
\DeclareMathOperator{\Hasse}{Hasse}
\newcommand{\edge}{\ \text{---}\ }

\title{Flip of lattices}

\begin{document}

\begin{abstract}
In this paper, we introduce a new combinatorial operation, called a flip, on arbitrary partially ordered sets. We define a mutation to be a flip that maps a lattice to a lattice.
We study properties of flips, and give a necessary and sufficient condition for a flip to be a mutation. We introduce locally mutable lattices and mutable lattices in terms of flips, and prove that mutable lattices are semidistributive.
We show that type-A and type-B Cambrian lattices are locally mutable, and those associated with the finite-type Coxeter quivers with different orientations are related also by the sequence of mutations. Finally we introduce a new class of lattices, called Ordovician lattices, as the lattices obtained from Cambrian lattices by iterated mutations. We provide conjectures on the structure of Ordovician lattices and on the compatibility between our mutation and the mutation in the theory of cluster algebras.
\end{abstract}

\maketitle

\setcounter{tocdepth}{1}
\tableofcontents

\section{Introduction} 
N.~Reading constructed lattices associated with finite-type Coxeter quivers in 2006, and named them Cambrian lattices \cite{MR2258260}.
Its definition is inspired by the theory of cluster algebras \cite{MR1887642,MR2004457,MR2110627,MR2295199}.
The Cambrian lattices are vast generalization of classical Tamari lattices \cite{MR146227}. 
S.~Ladkani introduced the operation \emph{Flip-Flop} on arbitrary partially ordered sets \cite{ladkani2007universalderivedequivalencesposets}.
The idea of its definition stems from quiver representation theory.
It was proved that the Cambrian lattices associated with finite-type Coxeter quivers with different orientations were related by a sequence of Flip-Flops \cite{ladkani2007universalderivedequivalencesposets}.
Cambrian lattices have been studied by both combinatorial and tilting-theoretic approaches \cite{MR2258260,MR4579952}.

In this paper, we introduce a new combinatorial operation, called a \emph{flip}, which describes a connection between two posets.
The definition of flips is purely order-theoric.
We define a new class of lattices, which we call Ordovician lattices, by applying flips to Cambrian lattices.
We prove that Ordovician lattices, in general, cannot be obtained from the weak order of finite Coxeter groups in the same way as Cambrian lattices.
We present several observations suggesting that Ordovician lattices are associated with infinite Coxeter groups.
Since the weak order of infinite Coxeter groups are not lattices in general, the construction of Cambrian lattices cannot be extended to this case. 
In \cite{barkley2025affinetamarilattice}, G.~Barkley and C.~Defant construct affine version of Tamari lattice by the affine Dyer lattice. However, their construction essentially depends on the specific property of Coxeter group of type $\tilde{A}_n$ \cite{barkley2025affinetamarilattice,MR3943754,barkley2025affineextendedweakorder}.
We study Ordovician lattices without considering infinite Coxeter groups, and conjecture its algebraic property.

\subsection{Main results}
We review the main terminology, results, and conjectures of this paper.
\begin{definition}[\cref{flip pair,definition of flip}]
Let $L$ be a poset. We assume that $A,B\subset L$ satisfies the following:
\[ L=A\cup B,\  \emptyset = A\cap B, \]
\[ \forall x \in A , \forall y \in B \ \ x\ngeq y. \] 
Then we define a set with a relation $(L',\leq_{L'})$ by $L'=L$ as a set, and
\begin{align*}
  x \leq_{L'} y \ \Leftrightarrow & \ \exists x_0,x_1,\cdots x_n, \\
  & \text{such that} \  x_0=x,\ x_n=y,\ x_0 \prec_{L'} x_1 \prec_{L'} \cdots \prec_{L'} x_n.
\end{align*}
Here, $\prec_{L'}$ is defined as:
\begin{align*}
  x \prec_{L'} y \ \Leftrightarrow & \ (x,y\in A \ \text{and} \  x \prec_{L} y) \\
  \ & \text{or} \ (x,y\in B \ \text{and} \  x \prec_{L} y) \\
  \ & \text{or} \  (x\in B, y \in A \ \text{and} \  y \prec_{L} x)
\end{align*}
where $\prec_{L}$ stands for a cover relation of $L$.
We refer to $(L',\leq_{L'})$ as the flip of $L$ by a flip pair $(A,B)$.
We denote this flip by $\mu_{(A,B)}$.
Roughly speaking, $(L',\leq_{L'})$ is obtained from $(L,\leq_L)$ by reversing all cover relations between $A$ and $B$.
\end{definition}

We present an example of a flip. In \cref{fig:MutationBetweenA3TamariandAffineTamari}, the two black components represent $A$ and $B$, and the edges represent cover relations of $L$.
Then the flip $\mu_{(A,B)}$ reverses all cyan lines.
Flips modify the structure of posets only slightly.
  \begin{figure}[htbp]
\centering
\begin{tabular}{cc}
  \begin{minipage}[t]{0.3\linewidth}
  \centering
\begin{tikzpicture}[scale=0.3]

\node[name=A0,draw,inner sep=2pt,circle] at (3,-4) {};
\node[name=A1,draw,inner sep=2pt,circle] at (5,0) {};
\node[name=A2,draw,inner sep=2pt,circle] at (1,0) {};
\node[name=A3,draw,inner sep=2pt,circle] at (5,2) {};
\node[name=A4,draw,inner sep=2pt,circle] at (1,4) {};
\node[name=A5,draw,inner sep=2pt,circle] at (1,6) {};
\node[name=A6,draw,inner sep=2pt,circle] at (3,8) {};
\node[name=B0,draw,inner sep=2pt,circle] at (-3,-6) {};
\node[name=B1,draw,inner sep=2pt,circle] at (-1,-4) {};
\node[name=B2,draw,inner sep=2pt,circle] at (-1,-2) {};
\node[name=B3,draw,inner sep=2pt,circle] at (-5,0) {};
\node[name=B4,draw,inner sep=2pt,circle] at (-1,2) {};
\node[name=B5,draw,inner sep=2pt,circle] at (-5,2) {};
\node[name=B6,draw,inner sep=2pt,circle] at (-3,6) {};

\draw[black,<-] (A0)--(A1);
\draw[black,<-] (A0)--(A2);
\draw[black,<-] (A1)--(A3);
\draw[black,<-] (A1)--(A5);
\draw[black,<-] (A2)--(A4);
\draw[black,<-] (A4)--(A5);
\draw[black,<-] (A3)--(A6);
\draw[black,<-] (A5)--(A6);

\draw[black,<-] (B0)--(B1);
\draw[black,<-] (B0)--(B3);
\draw[black,<-] (B1)--(B2);
\draw[black,<-] (B1)--(B5);
\draw[black,<-] (B2)--(B4);
\draw[black,<-] (B3)--(B5);
\draw[black,<-] (B4)--(B6);
\draw[black,<-] (B5)--(B6);

\draw[cyan,->] (A0)--(B0);
\draw[cyan,->] (A2)--(B2);
\draw[cyan,->] (A3)--(B3);
\draw[cyan,->] (A4)--(B4);
\draw[cyan,->] (A6)--(B6);

\end{tikzpicture}
  \end{minipage} &
  
  \begin{minipage}[t]{0.3\linewidth}
\centering
  \begin{tikzpicture}[scale=0.3]
\node[name=A0,draw,inner sep=2pt,circle] at (3,-8) {};
\node[name=A1,draw,inner sep=2pt,circle] at (5,-4) {};
\node[name=A2,draw,inner sep=2pt,circle] at (1,-4) {};
\node[name=A3,draw,inner sep=2pt,circle] at (5,-2) {};
\node[name=A4,draw,inner sep=2pt,circle] at (1,0) {};
\node[name=A5,draw,inner sep=2pt,circle] at (1,2) {};
\node[name=A6,draw,inner sep=2pt,circle] at (3,4) {};
\node[name=B0,draw,inner sep=2pt,circle] at (-3,-6) {};
\node[name=B1,draw,inner sep=2pt,circle] at (-1,-4) {};
\node[name=B2,draw,inner sep=2pt,circle] at (-1,-2) {};
\node[name=B3,draw,inner sep=2pt,circle] at (-5,0) {};
\node[name=B4,draw,inner sep=2pt,circle] at (-1,2) {};
\node[name=B5,draw,inner sep=2pt,circle] at (-5,2) {};
\node[name=B6,draw,inner sep=2pt,circle] at (-3,6) {};

\draw[black,<-] (A0)--(A1);
\draw[black,<-] (A0)--(A2);
\draw[black,<-] (A1)--(A3);
\draw[black,<-] (A1)--(A5);
\draw[black,<-] (A2)--(A4);
\draw[black,<-] (A4)--(A5);
\draw[black,<-] (A3)--(A6);
\draw[black,<-] (A5)--(A6);

\draw[black,<-] (B0)--(B1);
\draw[black,<-] (B0)--(B3);
\draw[black,<-] (B1)--(B2);
\draw[black,<-] (B1)--(B5);
\draw[black,<-] (B2)--(B4);
\draw[black,<-] (B3)--(B5);
\draw[black,<-] (B4)--(B6);
\draw[black,<-] (B5)--(B6);

\draw[cyan,<-] (A0)--(B0);
\draw[cyan,<-] (A2)--(B2);
\draw[cyan,<-] (A3)--(B3);
\draw[cyan,<-] (A4)--(B4);
\draw[cyan,<-] (A6)--(B6);

\end{tikzpicture}
  \end{minipage}
\end{tabular}
\caption{}
\label{fig:MutationBetweenA3TamariandAffineTamari}
\end{figure}

\begin{theorem}[\cref{flip and minimum}]
  Let $L$ be a connected poset, and let $x\in L$.
  Then there uniquely exists a poset $L'$ obtained from $L$ by a finite sequence of flips whose least element is $x$.
\label{flip and minimum chapter 1}
\end{theorem}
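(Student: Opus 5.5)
The plan is to view flips as reorientations of one fixed undirected graph. I would first take from the preceding section the basic facts about $\mu_{(A,B)}$: its result $L'$ is again a poset, and the cover relations of $L'$ are exactly the cover relations of $L$, with those between $A$ and $B$ reversed. Note that the flip-pair condition says precisely that $B$ is an up-set and $A=L\setminus B$ is a down-set. So every poset reachable from $L$ by flips has the same underlying undirected Hasse graph $G$, and is determined by an orientation of $G$. I assume $L$ is finite, as in the paper's setting; $G$ is connected because $L$ is.

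\emph{Existence.} Given $x$, if $x$ is not already the least element, take the flip pair $B={\uparrow}x$ (an up-set) and $A=L\setminus{\uparrow}x$ (a down-set). Since $G$ is connected and $A\neq\emptyset$, some cover edge joins $a\in A$ to $b\in B$. Inside $B$ all covers are kept, so $B\subseteq{\uparrow}_{L'}x$. After the flip we also have $b\prec_{L'} a$, so $a\in{\uparrow}_{L'}x$. Thus $|{\uparrow}x|$ strictly increases, and after at most $|L|$ flips ${\uparrow}x=L$, i.e.\ $x$ is the least element.

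\emph{Uniqueness.} For an orientation $o$ and an edge traversed from $u$ to $v$, set $o(u,v)=+1$ if $u\prec v$ and $-1$ otherwise. For a closed walk $\gamma$ in $G$, let $c_o(\gamma)$ be the sum of these values along $\gamma$. This sum is invariant under flips. Indeed, $\gamma$ crosses the cut between $A$ and $B$ equally often in each direction. Before the flip every $A\to B$ crossing contributes $+1$ and every $B\to A$ crossing contributes $-1$, and the flip swaps these contributions. Hence two reachable orientations $o,o'$ differ by a coboundary: there is $f:L\to\mathbb{Z}$ with $o'(u,v)-o(u,v)=f(v)-f(u)$ for every edge. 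Consequently $f(v)-f(u)\in\{0,\pm2\}$, and it is nonzero exactly when the edge is reversed.

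Now suppose $x$ is least in both $o$ and $o'$, and $f$ is not constant. Let $M$ be the set where $f$ attains its maximum; if $x\in M$, use the set of minima instead, which is symmetric. Every edge $u\notin M$, $v\in M$ has $f(v)-f(u)=2$, which forces $o(u,v)=-1$ and $o'(u,v)=+1$. So in $o$ every boundary edge of $M$ points out of $M$, meaning no upward path enters $M$ from outside. Since $x\notin M$ and $M\neq\emptyset$, some element of $M$ is not above $x$ in $o$, contradicting that $x$ is least. Hence $f$ is constant, $o=o'$, and the poset is unique.

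The main obstacle I expect is making the coboundary step rigorous. The cycle-invariance argument must also cover non-simple closed walks, and the passage from equal circulations to the existence of $f$ uses connectivity of $G$: fix $f(x)=0$ and define $f$ by integrating $o'-o$ along paths. I would also check carefully that the earlier results really give that the covers of $L'$ are exactly the reoriented edges, since the existence step depends on that.
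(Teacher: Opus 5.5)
Your proof is correct. The existence half is the paper's argument: it also flips with $B=\{w\geq x\}$, $A=L\setminus B$ and shows that $\{w : x\nleq w\}$ strictly shrinks. The paper locates the crossing cover $u\prec v$ via the function $d$, whereas you get it directly from connectivity.

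The uniqueness half takes a genuinely different route. The paper defines $d(y,z)$ as the minimal number of descending covers on a covering path from $y$ to $z$. It shows that a flip changes $d$ by a potential difference, $d_{L'}(y,z)-d_{L}(y,z)=(\epsilon(z)-\epsilon(y))/2$, so $D(x,y,z)=d(x,y)+d(y,z)-d(x,z)$ is flip-invariant. When $x$ is least, $D(x,y,z)=d(y,z)$, which vanishes exactly when $y\leq z$, so the order is read off directly from the invariant.

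You apply the same coboundary phenomenon to the orientation cochain itself. Circulations around closed walks are invariant, so two reachable orientations differ by $\delta f$ with jumps in $\{0,\pm 2\}$, and a maximum principle on $f$ forces $f$ to be constant.

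What your route buys:
\begin{itemize}
\item It is more elementary and purely graph-theoretic. It is close in spirit to the fact that source/sink reversals preserve circulations, which fits the paper's factorization of flips into BGP-reflections.
\item Combined with existence and dual flips, it characterizes flip-equivalence by equality of circulations, among posets sharing a fixed connected undirected Hasse graph.
\end{itemize}

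What the paper's $D$ buys:
\begin{itemize}
\item It needs no extremal argument.
\item It also encodes the undirected Hasse graph, since covers are exactly the pairs with $D(y,z,y)=1$. Hence its corollary on flip-equivalence needs no assumption that the two posets share a Hasse graph.
\item The same $d$ is reused later for the map $u$ on mutable lattices.
\end{itemize}
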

We introduce a certain nonnegative integer $D_L(x,y,z)$ determined from $x,y,z\in L$ and the structure of $L$. See \cref{sec:Invariants of flips} for its precise definition.
As a corollary of \cref{flip and minimum chapter 1}, we show the following:
\begin{corollary}[\cref{D and sequence of flips}]
  Let $L_1=(L,\leq_{L_1})$ and $L_2=(L,\leq_{L_2})$ be connected posets with the underlying set $L$. Then the following are equivalent:
  \begin{enumerate}
    \item $L_2$ is obtained from $L_1$ by applying a finite sequence of flips.
    \item $D_{L_1}(x,y,z)=D_{L_2}(x,y,z)$ for all $x,y,z\in L$.
  \end{enumerate}
\end{corollary}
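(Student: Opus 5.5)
The plan is to reduce both directions to \cref{flip and minimum chapter 1} together with one invariance lemma for $D$. Since the exact definition of $D_L(x,y,z)$ is deferred to \cref{sec:Invariants of flips}, I will assume it is designed so that a single flip does not change it. Concretely, I expect $D_L(x,y,z)$ to be built from the Hasse diagram by counting, along a path from $x$ to $z$ through $y$, how many cover edges are traversed upward versus downward, so that it measures a ``height difference'' and is insensitive to which orientation is declared minimal.

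\textbf{(1) $\Rightarrow$ (2).} First I prove the invariance lemma: for any flip pair $(A,B)$ of a connected poset $L$, $D_{\mu_{(A,B)}(L)}(x,y,z)=D_L(x,y,z)$ for all $x,y,z$. A flip only reverses cover edges between $A$ and $B$, and by the condition $x\ngeq y$ for $x\in A$, $y\in B$, every such edge goes from $A$ up to $B$. Assign a height shift: $h'(a)=h(a)$ for $a\in A$, and for $b\in B$ let $h'(b)=h(b)-c$ for a constant $c$. Reversing every crossing edge at once is then the same as a uniform shift of $B$, and any quantity of the form
\[h(x)-h(y)+h(y)-h(z)\]
or a symmetric combination in $x,y,z$ stays unchanged. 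By induction on the length of the sequence, (2) follows.

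\textbf{(2) $\Rightarrow$ (1).} Fix $x_0\in L$. By \cref{flip and minimum chapter 1}, there are posets $L_1'$ and $L_2'$, obtained from $L_1$ and $L_2$ respectively by finite sequences of flips, each having least element $x_0$. By the first direction, $D_{L_1'}=D_{L_1}=D_{L_2}=D_{L_2'}$. The key claim is that a connected poset with least element $x_0$ is recovered from the function $D(x_0,\cdot,\cdot)$; for example, $y\le z$ would hold iff $D(x_0,y,z)$ equals a certain value, something like a ``triangle equality'' $d(x_0,y)+d(y,z)=d(x_0,z)$. Granting this claim, $L_1'=L_2'$ as posets. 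Flips are invertible: reversing a flip by $(A,B)$ is the flip of $L'$ by $(B,A)$, since after the flip no element of $B$ lies above an element of $A$ through a crossing edge. Hence $L_2$ is obtained from $L_1$ by going to $L_1'=L_2'$ and then inverting the sequence that led from $L_2$ to $L_2'$.

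\textbf{Main obstacle.} I expect the hardest step to be the reconstruction claim, together with the check that inverse flips are flips, including verifying that $(B,A)$ satisfies the flip-pair condition in $L'$. For the reconstruction I would first try to show that cover relations in a poset with a minimum are detected by $D$. Failing that, I would use the uniqueness part of \cref{flip and minimum chapter 1} directly: show that the set of posets reachable by flips with least element $x_0$ is determined by $D$. This might require revisiting the proof of that theorem to see that it is constructive from $D$-data.
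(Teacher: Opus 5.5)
Your overall plan matches the paper's proof. For (2)$\Rightarrow$(1) you fix $x_0$, use \cref{flip and minimum} to reach $L_1'$ and $L_2'$ with least element $x_0$, identify them because $D$ determines a poset with a least element, and then undo the flips from $L_2$ to $L_2'$ by dual flips.

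Your guessed reconstruction is exactly what the paper uses. Here $D_L(x,y,z)=d_L(x,y)+d_L(y,z)-d_L(x,z)$, where $d_L(x,y)$ is the minimum number of descending covers on a covering path from $x$ to $y$. When $x_0$ is least, $d(x_0,\cdot)\equiv 0$, so $D(x_0,y,z)=d(y,z)$, and this is $0$ iff $y\le z$ (\cref{D recovers d}(3) and \cref{basic properties of d}(1)).

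Drop your fallback via the uniqueness part of \cref{flip and minimum}. That uniqueness is among posets reachable from one fixed poset. It cannot identify $L_1'$ with $L_2'$ unless you already know $L_2$ is reachable from $L_1$, which is what you are proving.

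The real gap is your argument for (1)$\Rightarrow$(2).
A height function compatible with covers need not exist; the pentagon is not graded. $D$ is not a combination of heights. Moreover, the quantity you display equals $h(x)-h(z)$, which is \emph{not} invariant under lowering $B$ by $c$ when $x\in A$ and $z\in B$.

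Your ``uniform shift of $B$'' intuition is correct, but it must be applied to $d$, which is a minimum over paths. Take any covering path from $x$ to $y$; these are the same paths in $L$ and $L'$, since the undirected Hasse graph is unchanged. The flip turns each $A\to B$ step from ascending into descending, turns each $B\to A$ step the other way, and leaves the remaining steps alone. So the number of descending covers changes by $\#\{A\to B\text{ steps}\}-\#\{B\to A\text{ steps}\}=(\epsilon(y)-\epsilon(x))/2$, with $\epsilon=-1$ on $A$ and $+1$ on $B$.

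Because this shift depends only on the endpoints, it passes through the minimum. This gives $d_{L'}(x,y)=d_L(x,y)+(\epsilon(y)-\epsilon(x))/2$ (\cref{potential epsilon}). The coboundary term cancels in $d(x,y)+d(y,z)-d(x,z)$, so $D$ is invariant under one flip, and by induction under finite sequences.

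With that step replaced, your proof is complete, provided you take invertibility of flips from the dual-flip lemma. That lemma rests on \cref{After flip}(4): the covers of $L'$ are exactly the relations $\prec_{L'}$.
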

Let $L$ be a poset. We define a graph $G(L)$ as follows (cf.~\cref{the flip graph G}):
  \begin{itemize}
    \item A vertex is labelled by $L'$ such that $L'$ is obtained from $L$ by applying a finite sequence of flips and such that $L'$ has the least element. 
    \item An edge joins two vertices $L_1$ and $L_2$ if a flip $L_1 \xmapsto[]{\mu_{(A,B)}} L_2$ exists.
  \end{itemize}
\begin{theorem}[\cref{flip graph and Hasse quiver}]
  Let $L$ be a connected poset with the least element.
  Then $G(L)$ is isomorphic to the underlying undirected graph of $\Hasse(L)$.
\end{theorem}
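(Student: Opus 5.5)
Using \cref{flip and minimum chapter 1}, I define a map $\Phi\colon L\to V(G(L))$ by letting $\Phi(x)$ be the unique poset obtained from $L$ by a finite sequence of flips whose least element is $x$. By uniqueness, $\Phi$ is well defined and injective: two distinct elements cannot both be least in the same poset, so if $\Phi(x)=\Phi(y)$ then $x=y$. It is surjective because every vertex $L'$ of $G(L)$ has a least element $x$ and is reachable by flips, so $L'=\Phi(x)$. Here I use that flips are invertible ($\mu_{(B,A)}$ undoes $\mu_{(A,B)}$), so reachability is symmetric and the vertex set does not depend on which reachable poset we start from. It remains to show that $\Phi(x)$ and $\Phi(y)$ are joined by an edge exactly when $x$ and $y$ are joined by an edge in $\Hasse(L)$.

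\textbf{Edges of $G(L)$ come from Hasse edges.} Suppose $L_1=\Phi(x)$, $L_2=\Phi(y)$ and $L_2=\mu_{(A,B)}(L_1)$. The least element $x$ of $L_1$ lies in $A$, since $B$ is an up-set. After the flip, every cover relation between $A$ and $B$ is reversed, so in $L_2$ all of $A$ sits above the reversed edges, and $y\in B$. In $L_2$, the element $x$ must cover something. Walking down from $x$ within $A$ in $L_2$ is impossible, because $x$ is minimal in $A$. So $x$ must cover some element of $B$ through a reversed edge, and hence $x\prec_{L_1} b$ for some $b\in B$. I expect the sets $A=\{x\}$ and $B=L\setminus\{x\}$ to be forced, but the real task is to identify $y$ with such a $b$ and to show that $x$–$y$ is a Hasse edge of $L$ itself, not merely of $L_1$. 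To transfer this I will use that flips preserve the underlying undirected Hasse graph, which follows from the definition together with the lemma that covers in $L'$ are exactly the reversed or retained covers. The main obstacle is this: a flip need not preserve cover relations in general, since a flipped chain may create a new comparability that makes an old cover non-covering. I will handle it using the invariant $D_L$ of \cref{D and sequence of flips}, which should encode the Hasse graph (for example, $D_L(x,y,y)$ detecting adjacency). If that fails, I will argue directly that the least element of $L_2$ must be adjacent to $x$ in the Hasse graph of $L_1$.

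\textbf{Hasse edges give edges of $G(L)$.} Conversely, let $x\edge y$ be an edge in the Hasse graph, which is the same for every reachable poset. In $L_1=\Phi(x)$ we have $x\prec y$. I take $B=\{z : z\ge y\}$ and $A=L\setminus B$. Then $B$ is an up-set, so $(A,B)$ is a flip pair. I claim that in $\mu_{(A,B)}(L_1)$ the element $y$ is least. The elements of $B$ lie above $y$ through covers inside $B$. For elements of $A$, I connect each one down to $x$ inside $A$ (possible since $x$ is least in $L_1$ and $A$ is a down-set containing $x$), and then use the reversed cover $x\succ y$. Hence this flip equals $\Phi(y)$ by uniqueness, which gives the required edge. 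The converse direction of the previous paragraph, specifically identifying $y$ as adjacent to $x$ through the correct transport of covers, is where I expect the main difficulty.
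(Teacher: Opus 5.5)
You have the same structure as the paper, and two of the three pieces are correct. The vertex bijection $\Phi$ works. So does your proof that a Hasse edge $x\edge y$ gives an edge $\Phi(x)\edge\Phi(y)$: that is uniqueness in \cref{flip and minimum} plus \cref{flip on atom and minimum one}.

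The gap is in the other direction, which you leave open. You never show that the least element $y$ of $L_2=\mu_{(A,B)}(L_1)$ is adjacent to $x$. This is exactly the content of \cref{flip on atom and minimum two}, and it follows in one line from what you already have:
\begin{itemize}
\item You found $b\in B$ with $b\prec_{L_2}x$, i.e.\ $x\prec_{L_1}b$.
\item Since $y$ is least in $L_2$ and $y,b\in B$, we have $y\leq_{L_2}b$, hence $y\leq_{L_1}b$ by \cref{After flip}(3).
\item Since $x$ is least in $L_1$, $x\leq_{L_1}y$.
\item So $y\in[x,b]_{L_1}=\{x,b\}$, and $y\neq x$ because $x\in A$ and $y\in B$. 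Thus $y=b$ and $x\prec_{L_1}y$, which is an edge of the Hasse graph of $L$ by \cref{After flip}(5).
\end{itemize}
Alternatively, \cref{potential epsilon}(1) gives $d_{L_1}(y,x)=d_{L_2}(y,x)+1=1$, while $d_{L_1}(x,y)=0$, so $x\prec_{L_1}y$ by \cref{basic properties of d}(4).

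Several side remarks in that paragraph are wrong and should be dropped.
\begin{itemize}
\item $A=\{x\}$ is not forced. What is forced is $B=\{z : z\geq_{L_1}y\}$ and $A=L\setminus B$. For example, in $\{0,1\}^2$ the flip from least element $(0,0)$ to least element $(1,0)$ has $A=\{(0,0),(0,1)\}$.
\item The ``main obstacle'' you name does not exist. \cref{After flip}(4) says the covers of $\mu_{(A,B)}(L)$ are exactly the retained and reversed covers of $L$. So a flip never turns a cover into a non-cover, and moving adjacency from $L_1$ to $L$ is immediate, as you noted a sentence earlier.
\item The proposed tool fails. $D_L(x,y,y)=d(x,y)+d(y,y)-d(x,y)=0$ identically, so it detects nothing. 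The right adjacency test is $D_L(x,y,x)=d(x,y)+d(y,x)=1$. But $D$ cannot close the gap in any form. By \cref{D and sequence of flips}, $D$ is the same at every vertex of $G(L)$, so it cannot tell pairs joined by one flip from pairs joined by several. You need the direct argument above, or the $\epsilon$-shift of $d$ itself.
\end{itemize}
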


A flip $\mu_{(A,B)}$ is said to be a \emph{mutation} when both $L$ and $\mu_{(A,B)}(L)$ are lattices.
A lattice $L$ may have two or more mutations. These mutations can be labelled by atoms (resp.~coatoms) of $L$.
When a mutation corresponds to an atom $a$, we denote it by $\mu_{a}$.
We define $L$ to be \emph{locally mutable} if $L$ can be mutated by all atoms (resp.~coatoms). 
Moreover, we define $L$ to be \emph{mutable} if all lattices obtained by a finite number of mutations from $L$ are locally mutable lattices.

\begin{theorem}[\cref{mutable implies semidistributive}]
  Let $L$ be a mutable lattice. Then $L$ is semidistributive.
  \label{mutable implies semidistributive chapter 1}
\end{theorem}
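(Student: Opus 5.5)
The plan is to prove join-semidistributivity and obtain meet-semidistributivity by duality. For finite lattices, join-semidistributivity is equivalent to the following statement: for every join-irreducible $j$ with lower cover $j_*$, the set $\{x : x\wedge j = j_*\}$ has a maximum. An equivalent formulation uses the coatom/atom labelling. So the strategy is to translate semidistributivity into a statement about atoms of every lattice in the mutation class of $L$, and then use local mutability to verify it. We assume $L$ is finite, as implicitly intended.

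\textbf{Step 1: what a mutation at an atom does.} Let $a$ be an atom of $L$ and $\mu_a=\mu_{(A,B)}$. Since $A$ contains no element $\geq$ any element of $B$ and $\hat 0$ lies below everything, we have $\hat 0\in A$. I expect the definition of $\mu_a$ to force $B=[a,\hat 1]=\{x : x\geq a\}$ and $A=L\setminus B$. In the flipped lattice the least element is then $a$: this matches the construction in \cref{flip and minimum chapter 1} and the bijection $G(L)\cong \Hasse(L)$ of \cref{flip graph and Hasse quiver}, where the edge $\hat0\prec a$ corresponds to $\mu_a$. The condition that $\mu_a(L)$ is a lattice should then be equivalent to the following:
\begin{itemize}
\item[(i)] $A=L\setminus[a,\hat1]$ has a greatest element $m_a$, so $A=[\hat0,m_a]$;
\item[(ii)] the cover relations between $A$ and $B$ are well behaved, so that $\mu_a(L)$ has a top element and meets exist.
\end{itemize}
This is presumably the content of the paper's necessary and sufficient condition for a flip to be a mutation, and I would invoke it.

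\textbf{Step 2: the key consequence.} Condition (i) says exactly that $L=[\hat0,m_a]\sqcup[a,\hat1]$. By the standard characterization (Day; Freese--Je\v{z}ek--Nation), this means that the atom $a$, viewed as a join-irreducible, admits the maximal element $\kappa(a)=m_a$ with $a\wedge m_a=\hat0$. Dually, local mutability at coatoms gives $\kappa^d$ for coatoms.

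\textbf{Step 3: propagate to all join-irreducibles.} For a general join-irreducible $j$, I would restrict to the interval $[j_*,\hat1]$ or use iterated mutations. Mutating along a maximal chain $\hat0\prec a_1\prec\cdots$ yields lattices $L'$ in the mutation class whose least element is an arbitrary $x$ (\cref{flip and minimum chapter 1}). Since $L$ is mutable, each such $L'$ is locally mutable. Because a flip reverses only covers between $A$ and $B$, the order inside $[j_*,\hat1]$, suitably identified, should persist in the lattice $L'$ whose bottom is $j_*$. In $L'$ the element $j$ becomes an atom. Step 2 applied in $L'$ then splits $L'$ into $[j,\cdot]\sqcup[\cdot,m]$. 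Translating back, this gives a greatest element $x$ of $L$ with $x\geq j_*$ and $x\not\geq j$. Combined with the corresponding fact for $x\wedge j=j_*$, this produces $\kappa(j)$, and hence join-semidistributivity.

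\textbf{Main obstacle.} The hardest part is Step 3: controlling how the order of $L$ transforms under the sequence of flips, precisely enough that "$x\geq j$ in $L$" corresponds to "$x$ lies in the $B$-part of the atom $j$ in $L'$". I would first try induction on the length of a chain from $\hat0$ to $j_*$. At each stage I would use that $\mu_a$ restricted to $B=[a,\hat1]$ is the identity order. Thus $[a,\hat1]_L$ embeds as an up-set-like block of $\mu_a(L)$ that contains the new bottom $a$ and the relevant elements above $j$. If this identification only holds on $[a,\hat1]$, then semidistributivity should be proved inductively on intervals: every upper interval $[a,\hat1]$ of a mutable lattice is itself mutable, and semidistributivity of all proper upper intervals together with the atom condition of Step 2 suffices. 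The last step uses the fact that a failure $x\vee y=x\vee z\neq x\vee(y\wedge z)$ either lives in some $[a,\hat1]$ or involves $y\wedge z=\hat0$, where (i) applies directly.
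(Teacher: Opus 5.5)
\textbf{There is a genuine gap in Step 3.} Your main route is, in outline, the order dual of the paper's proof. The paper mutates at coatoms along a chain from $y$ up to $1_L$, so that $y$ becomes the top while $[0_L,y]_L$ stays unchanged. It then uses \cref{locally mutable and AC-correspondence}(3): $\{z: x\vee z=\hat1\}$ is the principal filter of $\bigvee\{a \text{ atom}: a\nleq x\}$.

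The gap is the sentence ``Translating back, this gives a greatest element.'' That is where the whole argument lives, and as written it does not follow. In $L'$ (with bottom $j_*$), the split at the atom $j$ is $L'=[j_*,m]_{L'}\sqcup[j,1_{L'}]_{L'}$, where $m$ is a coatom of $L'$. Neither $m$ nor $1_{L'}$ need lie in $\{z: z\ge_L j_*\}$, so the split alone gives no maximum in $L$.

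The missing observation is that $\{z: z\ge_L j_*\}$ survives as the interval $[j_*,1_L]_{L'}$, with its $L$-order. At each step this set sits in the hanging wall, and after a flip the hanging wall is a \emph{down}-set of the new lattice. So the block is the principal ideal of $1_L$ in $L'$, not an ``up-set-like block.'' It follows that
\[
\{z\ge_L j_*: z\not\ge_L j\}=[j_*,m]_{L'}\cap[j_*,1_L]_{L'}=[j_*,\,m\wedge_{L'}1_L]_{L'},
\]
so $\kappa(j)=m\wedge_{L'}1_L$. The identification of ``$x\ge j$'' that you single out as the main obstacle is automatic. What is missing is the meet with $1_L$.

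\textbf{The fallback does not work.} Upper intervals of a mutable lattice need not be mutable, or even locally mutable. In the hexagon, the weak order of $S_3$ (mutable by \cref{mutation of weak order}), the interval $[a,\hat1]$ is a three-element chain. Its only atom is also its only coatom, so it has no AC-correspondence.

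Condition (i) also does not settle the case $y\wedge z=\hat0$. For the join condition, the atom and coatom splittings only give information about $\{z: x\vee z=\hat1\}$. When $x\vee y=x\vee z<\hat1$, you must first move that join to the top by mutations, which is exactly the paper's device.

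\textbf{A minor labelling slip.} Existence of $\max\{x: x\wedge j=j_*\}$ for every join-irreducible $j$ characterizes \emph{meet}-semidistributivity, not join-semidistributivity. Also, your ``combined with'' clause is redundant, since $\{x: x\ge j_*,\ x\not\ge j\}=\{x: x\wedge j=j_*\}$. Both points are harmless, because the mutable class is self-dual: $\mu_{(B,A)}(L^{op})=\mu_{(A,B)}(L)^{op}$.
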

The semidistributive property holds for Cambrian lattices, and it is studied from both representation-theoretic and lattice-theoretic viewpoints \cite{MR4579952,MR4280379}.
\cref{mutable implies semidistributive chapter 1} suggests that the structure of mutable lattices might be governed by lattice theory and representation theory.

The next theorem shows that a mutation describes the connection between Cambrian lattices.
It is already known by using Flip-Flop, but our approach is not based on quiver representation theory \cite{ladkani2007universalderivedequivalencesposets,MR4492558}.
The following theorem relates the mutation of lattices and that of quivers.
\begin{theorem}[\cref{mutation of general type Cambrian lattice}]
  Let $\overrightarrow{B}$ be a finite-type Coxeter quiver and $\Camb(\overrightarrow{B})$ the corresponding Cambrian lattice.
  If $i\in \overrightarrow{B}$ is a sink or a source, then $\mu_{\eta((i,i+1))}(\Camb(\overrightarrow{B})) \cong \Camb(\mu_{i}(\overrightarrow{B})) $.
  Here, $\mu_{i}(\overrightarrow{B})$ is the BGP-reflection of $\overrightarrow{B}$ on $i$.
  Moreover, if $\overrightarrow{B},\overrightarrow{B'}$ are finite-type Coxeter quivers which share the same underlying undirected graph, then $\Camb(\overrightarrow{B})$ is mapped to a lattice isomorphic to $\Camb(\overrightarrow{B'})$ by a finite sequence of mutations.
\end{theorem}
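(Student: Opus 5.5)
I would prove the statement in two stages: first the single-step identity for a sink or source, then the global statement by composing reflections.

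\textbf{Single step.} Assume $i$ is a sink of $\overrightarrow{B}$; the source case follows by the dual argument, since reversing all arrows corresponds to reversing the order on $\Camb$. I would use Reading's realization of $\Camb(\overrightarrow{B})$ as a quotient of the weak order on $W$ by the Coxeter element $c$ attached to $\overrightarrow{B}$. Equivalently, I would use the $c$-sortable elements with the induced order. I would take $\eta((i,i+1))$ to be the atom of $\Camb(\overrightarrow{B})$ labelled by the simple reflection $s_i$, which is initial in $c$ because $i$ is a sink.

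Reading's recursion for $c$-sortable elements gives the flip pair. Define
\[ A=\{w \text{ $c$-sortable} : w\not\geq s_i\}, \qquad B=\{w \text{ $c$-sortable} : w\geq s_i\}. \]
The condition that no element of $A$ lies above an element of $B$ is immediate, since $B$ is an up-set.

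By the recursion, $A$ is isomorphic to the $s_ic$-sortable elements of the parabolic subgroup $W_{\langle i\rangle}$. Left multiplication $w\mapsto s_iw$ identifies $B$ with the $s_ics_i$-sortable elements not above $s_i$ in $W$. Here $s_ics_i$ is the Coxeter element of $\mu_i(\overrightarrow{B})$, in which $i$ becomes a source.

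On the other side, $\Camb(\mu_i(\overrightarrow{B}))$ splits in the same way, into the elements not above $s_i$ and those above $s_i$. The roles are now exchanged:
\begin{itemize}
\item the parabolic part (elements not containing $s_i$) is the top piece, which should be matched with $B$;
\item the piece matched with $A$ lies below it.
\end{itemize}

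\textbf{Constructing and checking the bijection.} I would define the bijection $\phi:\Camb(\overrightarrow{B})\to\Camb(\mu_i(\overrightarrow{B}))$ piecewise through these identifications. I would then verify that $\phi$ carries cover relations inside $A$ and inside $B$ to cover relations, and that it carries the reversed $A$–$B$ covers to the covers between the two pieces of the new lattice.

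Since $\leq_{L'}$ is the transitive closure of the modified cover relation, it suffices to show two things:
\begin{itemize}
\item $\phi$ is a bijection on covers;
\item the transitive closure of the modified covers produces no extra relations.
\end{itemize}
The main obstacle is the cross covers. One must show that $x\prec y$ with $x\in A$ and $y\in B$ corresponds exactly to a cover from the image of $y$ up to the image of $x$. I would attack this using the cover description of Cambrian lattices by $c$-sortable elements and their cover reflections, tracking how conjugation by $s_i$ acts on cover reflections. If this becomes unwieldy, I would fall back on the explicit combinatorial models, triangulations in type A and centrally symmetric triangulations in type B, where the cross covers are single diagonal flips and can be checked directly.

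\textbf{Global statement.} Finally, I would show that any two orientations of a tree are related by a sequence of sink/source reflections. This is the classical BGP fact: the Coxeter graph of a finite-type Coxeter quiver is a tree, and one can realize each orientation by repeatedly reflecting at sinks.

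Applying the single-step result along such a sequence, each intermediate lattice is isomorphic to a Cambrian lattice and hence is a lattice, so each flip is a mutation. This yields a finite sequence of mutations taking $\Camb(\overrightarrow{B})$ to a lattice isomorphic to $\Camb(\overrightarrow{B'})$.
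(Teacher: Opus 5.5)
\paragraph{Verdict.}
Your reduction of the second assertion to the single step matches the paper: any two orientations of the Coxeter tree are joined by sink/source reflections, and each intermediate flip is a mutation because its result is isomorphic to a Cambrian lattice. The gap is the single step itself, which you set up but do not prove, and whose setup is partly wrong. (In the paper's convention arrows point from earlier to later letters of $c$, so a sink makes $s_i$ \emph{final}. Your ``sink'' case is therefore the paper's source case, which your duality remark makes harmless.)

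\paragraph{The matching of pieces is inconsistent.}
After the flip, $B$ is the \emph{bottom} piece and $A$ the top piece of $\mu_{\eta(s_i)}(\Camb(\overrightarrow{B}))$. Your map $w\mapsto s_iw$ correctly sends $B$ onto $\{u : u\not\geq s_i\}\subset\Camb(s_ics_i)$, a down-set, which is consistent with this. But you then say $B$ should be matched with a ``top piece'' that you identify with the parabolic part. Two things go wrong here:
\begin{itemize}
\item The parabolic part of a Cambrian lattice is a down-set, not a top piece.
\item It is strictly smaller than $\{u\not\geq s_i\}$. For $s_ics_i=s_1s_2$ and $s_i=s_2$, the element $s_1s_2$ is sortable, not above $s_2$, and not parabolic.
\end{itemize}
What $A\cong\Camb_{W_{\langle s_i\rangle}}(s_ic)$ must be matched with is the up-set $\{u\geq s_i\}$ of $\Camb(s_ics_i)$. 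Because $s_i$ is final rather than initial in $s_ics_i$, the initial-letter recursion tells you nothing about this up-set. So you have no map on $A$ (the natural candidate is $v\mapsto s_i\vee v$), and no proof that it is an order isomorphism onto that up-set.

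\paragraph{The cross-cover verification is missing.}
This is the more serious omission. You need that $x\prec y$ with $x\in A$, $y\in B$ holds exactly when $\phi(y)\prec\phi(x)$ in $\Camb(s_ics_i)$, and that every cover between the two pieces of $\Camb(s_ics_i)$ arises this way. This is the whole content of the theorem, and ``tracking how conjugation by $s_i$ acts on cover reflections'' is not an argument. Your fallback to triangulation models is what the paper does in its appendix for types A and B, but it does not reach types D, E, F, H, I.

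\paragraph{How the paper closes it.}
The paper handles all finite types at once by invoking Reading--Speyer. The map $[Z_{s_i}]\colon[w]_c\mapsto[Z_{s_i}(w)]_{s_ics_i}$ is a bijection between the $c$- and $s_ics_i$-Cambrian fans that preserves adjacency of maximal cones, hence preserves the undirected Hasse graph. A direct computation shows it preserves the direction of $w\le v$ exactly when $w,v$ lie on the same side of $s_i$. So it reverses precisely the $A$--$B$ covers and therefore realizes $\mu_{\eta(s_i)}$. Some such uniform adjacency-preserving bijection is the missing idea.
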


\begin{theorem}[\cref{mutation of general type Cambrian lattice}]
  Cambrian lattices of finite-type are locally mutable.
\end{theorem}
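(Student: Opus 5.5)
The plan is to reduce local mutability to a statement about single BGP-reflections, and then to use the previous theorem. Local mutability requires a mutation $\mu_a$ for every atom $a$ of $\Camb(\overrightarrow{B})$. I will first recall Reading's description of $\Camb(\overrightarrow{B})$ as the sublattice of the weak order on the finite Coxeter group $W$ consisting of $c$-sortable elements, where $c$ is the Coxeter element read off from the orientation of $\overrightarrow{B}$. In this description the atoms are exactly the simple reflections $s_i$, $i\in\overrightarrow{B}$, since every simple reflection is $c$-sortable. Dually, the coatoms correspond to the elements covered by $w_0$. So the atoms are labelled by the vertices $i$ of the quiver, and it suffices to produce, for each vertex $i$, a flip pair $(A,B)$ with $s_i$ the relevant atom and with $\mu_{(A,B)}(\Camb(\overrightarrow{B}))$ a lattice.

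For a vertex $i$ that is a source or a sink, the previous theorem does this directly. It gives $\mu_{\eta((i,i+1))}(\Camb(\overrightarrow{B}))\cong\Camb(\mu_i(\overrightarrow{B}))$, which is a lattice, so this flip is a mutation. It remains to check that it is the mutation labelled by the atom $s_i$. For this I will take the flip pair to be $A=\{w : s_i\le w\}$ and $B$ its complement, with $s_i$ initial or final in $c$ accordingly. Concretely, the aim is $A=\{w\in\Camb : s_i\le w\}$, using the standard bijection $w\mapsto s_iw$ (or $ws_i$) between $c$-sortable and $s_ics_i$-sortable elements. I expect this check to be immediate once the definition of $\eta$ is unwound.

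The main obstacle is a vertex $i$ that is neither a source nor a sink, since then there is no BGP-reflection at $i$. My first attempt will be to show that the property ``$\Camb$ admits a mutation at the atom labelled $i$'' is invariant under conjugating the Coxeter element, that is, under mutations at other vertices. Every orientation of a tree can be reached from one in which $i$ is a source by reflecting at sinks and sources different from $i$. By the second part of the previous theorem, these reflections induce lattice isomorphisms between Cambrian lattices, up to mutations. I would then need a compatibility lemma: a mutation at a vertex $j\neq i$ carries the flip pair for the atom $s_i$ to a flip pair for the corresponding atom, preserving the lattice property.

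If that commutation is hard to establish, I will instead argue directly with the necessary and sufficient condition for a flip to be a mutation proved earlier in the paper. The flip pair is $A=[s_i,\hat 1]$ and $B=L\setminus A$ in $L=\Camb(\overrightarrow{B})$. The condition should follow from semidistributivity and the fact that $\Camb$ is a lattice quotient of the weak order. In the weak order, $\{w:s_i\le w\}$ and its complement are both intervals isomorphic via $w\mapsto s_iw$, and the Cambrian congruence is compatible with this splitting. Checking that compatibility, i.e. that the interval $[s_i,\hat 1]$ and its complement in $\Camb$ satisfy the criterion, is the step I expect to require the real work.
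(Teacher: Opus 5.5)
There is a genuine gap: the case where $i$ is neither a source nor a sink is left unproved, and neither of your two routes closes it. This is precisely the case that produces non-Cambrian (Ordovician) lattices. Your source/sink case via the BGP-reflection statement is logically fine, since that part of the paper's theorem is proved independently. Note, however, that the footwall must be the down-set: for the atom $s_i$ one takes $B=\{x\ge s_i\}$ and $A$ its complement, not the reverse.

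Your first route rests on a compatibility lemma: a mutation at $j\neq i$ should transport mutability at the atom labelled $i$. You do not prove it, and it is false for lattices in general. The lattice in \cref{fig:semidistributive and locally mutable} is locally mutable, yet its mutation \cref{fig:neither semidistributive nor locally mutable} has an atom admitting no AC-correspondence at all. Nor can the source case be transported along isomorphisms. When $i$ is a source, $\mu_{a_i}$ of a Cambrian lattice is again Cambrian; when $i$ is neither, the result is not even a quotient of a finite weak order (\cref{general type Ordovician lattice and the infinite case}). So any proof of such a lemma would need Cambrian-specific input as strong as the statement itself.

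Your fallback route names the right ingredients but leaves the decisive step as ``the real work'', and semidistributivity is not what makes it go. The missing idea is that mutations descend along surjective lattice homomorphisms that do not collapse the atom. The argument runs as follows.

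\begin{enumerate}
\item In the weak order, $(s_i,s_iw_0)$ is an AC-correspondence and $\mu_{s_i}$ is a mutation (\cref{mutation of weak order}).
\item For any surjective lattice homomorphism $f$, \cref{prerequisite for quotient flip} shows that $f(A)\cap f(B)=\emptyset$ is equivalent to $f(s_i)\neq f(e)$.
\item Given this, \cref{quotient flip} shows that $(f(A),f(B))$ is a flip pair satisfying the AC condition and the $\partial$-sublattice condition. The latter uses $f(\partial A)=\partial f(A)$ and the fact that $\partial A$ is a sublattice of $W$. Hence the flip is a mutation by \cref{mutation iff}.
\item For $f=\eta_c$ the hypothesis holds because $s_i$ is $c$-sortable, so $\eta_c(s_i)\neq\eta_c(e)$.
\end{enumerate}

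Every atom of $\Camb(c)$ is some $\eta_c(s_i)$, so this treats all atoms uniformly. There is no source/sink case split and no use of the BGP-reflection result. The ``compatibility of the Cambrian congruence with the splitting'' that you expected to be hard is just the one-line condition $\eta_c(s_i)\neq\eta_c(e)$.

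You should also check the coatom clause of local mutability. It follows, for instance, from injectivity of $a\mapsto a'$ together with the fact that $\Camb(c)$ has equally many ($n$) atoms and coatoms. Your identification of coatoms with elements covered by $w_0$ is correct only for images under $\eta_c$, not among $c$-sortable elements themselves.
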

We can define a new class of lattices obtained by a finite sequence of mutations from Cambrian lattices. We call them \emph{Ordovician lattices}.
\begin{conjecture}[\cref{conj:structure of Ordovician lattice}]
  Cambrian lattices of finite-type are mutable.
\end{conjecture}
We confirm this conjecture in the cases of types $A_1$, $A_2$, $A_3$, $B_2$, and $B_3$.
The next theorem shows that, in general, Ordovician lattices cannot be obtained from the weak order of any finite Coxeter groups in the same way as Cambrian lattices are.
\begin{theorem}[\cref{general type Ordovician lattice and the infinite case}]
  Let $L$ be a Dynkin type Cambrian lattice, $a_i \in L$ be an atom such that $i$ is neither a sink nor a source, and $L'=\mu_{a_i}(L)$. Then $L'$ cannot be expressed as a quotient of the weak order of any finite Coxeter groups.
\end{theorem}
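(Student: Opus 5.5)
We prove the statement by contradiction, using two invariants: lattice quotients of the weak order of a finite Coxeter group $W$ are semidistributive and \emph{polygonal}, and their cardinality, number of atoms, and cover-graph degrees are tightly constrained by $W$. Suppose $L'=\mu_{a_i}(L)\cong W/\Theta$ for some finite Coxeter group $W$ and lattice congruence $\Theta$ on the weak order.

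\textbf{Step 1: pin down $W$.} The quotient $W/\Theta$ is a lattice whose rank data we compare with $L'$. Since flips preserve the underlying set, $|L'|=|L|$, which is the Coxeter--Catalan number of the Dynkin type of $L$. The least element of $L'$ has as many upper covers as $L$ had at the corresponding vertex of $\Hasse(L)$; by \cref{flip graph and Hasse quiver} this is again $n$, the rank. Every quotient of the weak order of $W$ has at most $\mathrm{rank}(W)$ atoms, and since the generators are join-irreducibles of the weak order, the quotient has exactly as many atoms as generators not contracted. Hence the relevant parabolic part of $W$ has rank $n$. So we can restrict attention to finite Coxeter groups $W$ of rank $n$ together with congruences that contract no simple generator.

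\textbf{Step 2: local structure at the bottom.} In a quotient of the weak order, each interval $[\hat 0, s\vee t]$ for atoms $s,t$ is a polygon: it consists of two chains of lengths determined by $m(s,t)$ after contraction, and it has length $2$ exactly when $m(s,t)=2$. In a Cambrian lattice $L$ with $i$ neither sink nor source, the atom $a_i$ has a neighbour $j$ with the arrow $i\to j$ and another neighbour $k$ with $k\to i$. We then compute explicitly, from the definition of the flip, the intervals $[\hat 0, x\vee y]$ in $L'$ for atoms $x,y$. We expect to find a pair of atoms whose join interval is not a polygon, or a polygon whose side lengths are incompatible with any $m(s,t)$ and orientation, for instance with both sides of length $\ge 3$, which is impossible for a Cambrian-type quotient of a dihedral parabolic. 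The new minimum is $a_i$, and the reversed covers around $a_i$ create intervals mixing the $A$- and $B$-parts, so the incompatibility should come from the two sides $j$ and $k$ behaving asymmetrically.

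\textbf{Step 3: rule out all quotients.} Because lattice congruences of the weak order restrict to congruences of rank-two parabolic intervals, the polygon found in Step 2 must be a quotient of a dihedral weak order $I_2(m)$. Dihedral quotients are exactly polygons with side lengths $p,q$, $1\le p,q\le m-1$, having a specific shape. Showing that the interval in $L'$ is not of this shape finishes the proof.

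The main obstacle is Step 2, namely making the structure of $L'$ near its minimum explicit uniformly across all Dynkin types. To do this we use the description of $\mu_{a_i}$ through the flip pair $A=\{x\ge a_i\}$, $B=L\setminus A$ established earlier, and reduce to the rank-three subdiagram $k\to i\to j$. In type $A_3$ with the linear non-sink/source orientation, a direct computation should exhibit a non-polygonal interval. If this local interval argument fails, the fallback is a counting argument: compare the number of join-irreducibles of $L'$, which is preserved or changed in a controlled way by the flip, with the number of join-irreducibles allowed in a quotient of $W$ of size $|L|$. This comparison, together with semidistributivity (\cref{mutable implies semidistributive}), should produce the contradiction.
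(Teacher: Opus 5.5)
\textbf{There is a genuine gap: your Step 2 looks for an obstruction inside a single interval $[0,x\vee y]$ of $L'$, and no such obstruction exists.} By \cref{general type of Ordovician lattice and ideal polygon}, for every pair of atoms $x,y$ of $L'$ the interval $[0,x\vee y]$ is a $(2,k)$-polygon. Every $(2,k)$-polygon is a lattice quotient of the dihedral weak order $I_2(m)$ for any $m\ge k$: contract interior edges on the two sides of the $(m,m)$-polygon.

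Your own test case already shows this. Mutating the $A_3$ Tamari lattice at the middle atom gives \cref{fig:A2affineTamari}, and there all three bottom intervals are pentagons. A pentagon is a quotient of the hexagon, so Step 3 cannot produce a contradiction.

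The criteria you propose are also incorrect for arbitrary lattice quotients:
\begin{itemize}
\item A $(3,3)$-polygon is a quotient of $I_2(3)$ (the trivial quotient), so ``both sides of length $\ge 3$'' is not forbidden. That restriction holds only for Cambrian quotients, which is not what the theorem is about.
\item ``Length $2$ exactly when $m(s,t)=2$'' holds in one direction only. The hexagon has the congruence $s\equiv st$, $t\equiv ts$, whose quotient is a square with neither atom contracted.
\end{itemize}
The fallback cannot work either. Quotients of weak orders are themselves semidistributive, so semidistributivity cannot separate $L'$ from them. Moreover, \cref{mutable implies semidistributive} requires $L'$ to be mutable, which is only conjectured. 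The join-irreducible count is not specified enough to yield anything.

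\textbf{The missing idea is that the obstruction involves three atoms at once.}
\begin{enumerate}
\item Since $i$ is neither a sink nor a source, there are arrows $j\to i\to k$. There is no edge between $j$ and $k$, because Dynkin diagrams are trees. Hence quiver mutation at $i$ creates a $3$-cycle on $i,j,k$ with nothing cancelling.
\item By \cref{general type of Ordovician lattice and ideal polygon}, the corresponding atoms $a'_i,a'_j,a'_k$ of $L'$ therefore satisfy $\#[0,x\vee y]\ge 5$ for every pair among them.
\item Use \cref{quotient of weak order} to assume no simple generator is sent to $0$. Then the class of $e$ is $\{e\}$, so the minimal element of the class of any atom of $L'$ is a simple generator. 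Thus the atoms of $L'$ are exactly the images of simple generators. This correspondence is injective, since $f(s)=f(t)$ would give $f(s)=f(s\wedge t)=f(e)$.
\item Your Step 3 observation is the right tool: $f$ maps $[e,s\vee t]$ onto $[0,f(s)\vee f(t)]$. It must be applied to a commuting pair. If $st=ts$, then $[e,s\vee t]$ has $4$ elements, so $\#[0,f(s)\vee f(t)]\le 4$.
\item The Coxeter graph of a finite Coxeter group is a forest, so among the three generators mapping to $a'_i,a'_j,a'_k$ some two commute. This contradicts step 2.
\end{enumerate}
Your Step 1 rank count is not needed. What is needed is the comparison ``no triangle in a finite Coxeter diagram'' versus ``triangle in $\mu_i(Q)$''.
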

We further study the relations between the mutation of lattices and the mutation in cluster theory. 
\begin{theorem}[\cref{general type of Ordovician lattice and ideal polygon}]
  Let $Q$ be a weighted quiver of Dynkin type, $L=\Camb(\overrightarrow{B}(Q))$, $a_i \in L$ an atom, and $L'=\mu_{a_i}(L)$.
  Then, \cref{General type Cambrian lattice and ideal polygon} can be applied to $L'$, and the resulting quiver $\overrightarrow{B'}$ is isomorphic to $\overrightarrow{B}(\mu_{i}(Q))$, where $i$ is the vertex of $Q$ that corresponds to $a_i$.
\end{theorem}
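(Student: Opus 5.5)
The plan is to read off the quiver of $L'$ entirely from the local structure of $L'$ at its least element, and to compare it edge by edge with the Fomin--Zelevinsky mutation rule for $\mu_i(Q)$. By \cref{flip and minimum chapter 1} and the identification of $G(L)$ with the underlying graph of $\Hasse(L)$, the mutation $\mu_{a_i}$ is the flip whose result has $a_i$ as its least element. The flip pair is $(A,B)$ with $A=\{x\in L : x\ge a_i\}$ and $B$ its complement. The first step is therefore to list the atoms of $L'$:
\begin{itemize}
\item the old bottom $\hat 0$, which now covers $a_i$ because the cover $\hat 0\prec a_i$ is reversed;
\item for each $j\neq i$, the unique element $b_j\in A$ covering $a_i$ inside the rank-two interval $[\hat 0, a_i\vee a_j]$ of $L$. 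The elements $b_j$ lie in $A$, so their covers over $a_i$ are kept.
\end{itemize}
This gives a bijection between the atoms of $L'$ and the vertices of $Q$, with $i\mapsto \hat 0$ and $j\mapsto b_j$.

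Next I recall what \cref{General type Cambrian lattice and ideal polygon} needs as input. For a Cambrian lattice, the pair of atoms $a_j,a_k$ spans a polygonal interval $[\hat 0,a_j\vee a_k]$, and its two side lengths $(p,q)$ encode three things: whether $j,k$ are adjacent, the weight of the edge, and its orientation (the longer side determines the direction). To show the construction applies to $L'$, I must check that for every pair of atoms of $L'$ the interval up to their join in $L'$ is again such a polygon with admissible side lengths. I would split into two cases.

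\emph{Pairs involving $i$.} Take the atoms $\hat 0$ and $b_j$ of $L'$. The interval $[a_i,\hat 0\vee_{L'} b_j]$ in $L'$ is the old polygon $[\hat 0,a_i\vee a_j]$ of $L$ with its bottom edge $\hat 0\prec a_i$ turned over. Its two sides are therefore exchanged in length. This reverses the arrow between $i$ and $j$ and keeps its weight, which is exactly the rule for arrows incident to $i$ in $\mu_i(Q)$.

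\emph{Pairs $j,k\neq i$.} If $j$ and $k$ are not both neighbours of $i$, I expect the polygon above $b_j,b_k$ to be a translate of the old polygon $[\hat 0,a_j\vee a_k]$. I would prove this using semidistributivity (\cref{mutable implies semidistributive chapter 1}, or directly from the Cambrian structure) to push the join $a_j\vee a_k$ up past $a_i$.

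The main obstacle is the case $j\to i\to k$ (or weighted variants), where FZ mutation creates or cancels an arrow between $j$ and $k$. Here the interval $[a_i, b_j\vee b_k]$ lives in the rank-three interval $[\hat 0,a_i\vee a_j\vee a_k]$ of $L$, which is a rank-three Cambrian lattice. I would first try to reduce to this parabolic situation, using that intervals of the form $[\hat 0,\bigvee S]$ in a Cambrian lattice are Cambrian lattices of the restricted quiver. I would then compute the polygon $[a_i,b_j\vee b_k]$ explicitly, via Reading's description by $c$-sortable elements, for the finitely many rank-three Dynkin types ($A_3$, $B_3$/$C_3$, and mixed weighted paths). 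In each case I would check that the resulting side lengths match the weight $\frac{1}{2}\bigl(|b_{ji}|b_{ik}+b_{ji}|b_{ik}|\bigr)$ added to $b_{jk}$.

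Assembling the three cases gives an isomorphism $\overrightarrow{B'}\cong\overrightarrow{B}(\mu_i(Q))$.
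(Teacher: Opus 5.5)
Your overall architecture matches the paper's. The atoms of $L'$ are $\hat 0$ together with the upper covers $b_j$ of $a_i$. The pairs $(\hat 0,b_j)$ are handled by the flipped rank-two polygon, and the remaining pairs by passing to the rank-three Cambrian interval $[\hat 0,a_i\vee a_j\vee a_k]_L$ plus a finite check.

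\textbf{The gap is in the pairs $j,k\neq i$.} For the subcase ``$j,k$ not both neighbours of $i$'' you only state an expectation, and the tool you name does not supply it.
\begin{itemize}
\item \cref{mutable implies semidistributive} needs $L$ to be mutable. For general finite-type Cambrian lattices this is only \cref{conj:structure of Ordovician lattice}.
\item Semidistributivity of $L$ itself (known from the literature) gives no mechanism identifying $[a_i,b_j\vee_{L'}b_k]_{L'}$ with $[\hat 0,a_j\vee a_k]_L$. The obvious candidate, $x\mapsto x\vee a_i$, already fails when $j$ is adjacent to $i$ and the long side of $[\hat 0,a_i\vee a_j]$ passes through $a_i$: then $b_j\neq a_i\vee a_j$.
\item Your split also misses the configuration $j\to i\leftarrow k$ (or $j\leftarrow i\to k$). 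Both are neighbours of $i$, but there is no path $j\to i\to k$, so it falls under neither subcase. It is the only configuration that occurs when $i$ is a sink or source with at least two neighbours.
\end{itemize}

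\textbf{The missing idea.} You assert that the interval ``lives in'' $[\hat 0,a_i\vee a_j\vee a_k]_L$ without saying why an interval of $L'$ is an interval of $L$. The justification is this: the hanging wall $\{x\ge a_i\}$ keeps its order under the flip and becomes a down-set of $L'$. By Case~3 in the proof of \cref{sufficient condition for mutation}, $b_j\vee_{L'}b_k=b_j\vee_L b_k$. Hence
\[
[a_i,b_j\vee_{L'}b_k]_{L'}=[a_i,b_j\vee_L b_k]_L\subseteq[\hat 0,a_i\vee a_j\vee a_k]_L
\]
for \emph{every} pair $j,k\neq i$, whether or not they are adjacent to $i$. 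The finite rank-three computation then settles all of them uniformly. This is how the paper argues, with no separate ``translate'' case and no appeal to semidistributivity.

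\textbf{Smaller points.}
\begin{itemize}
\item For the pair $(\hat 0,b_j)$, the flip reverses two edges of the old polygon: the bottom edge $\hat 0\prec a_i$ and the top edge $c\prec a_i\vee a_j$, where $c$ is the element of the $a_j$-side chain covered by $a_i\vee a_j$. So the new polygon is $[a_i,c]_{L'}$. Let $p,q$ be the old side lengths through $a_i$ and $a_j$. Reversing only the bottom edge would give side lengths $q+1$ (through $\hat 0$) and $p-1$ (through $b_j$), instead of the correct $q$ and $p$.
\item That this $L'$-interval consists exactly of the old polygon's elements also needs an argument, e.g.\ \cref{mutation of ideal}, as in the paper.
\item With your naming, $(A,B)$ with $A=\{x\ge a_i\}$ is not a flip pair in the paper's convention, where the footwall is the down-set.
\end{itemize}
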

\begin{conjecture}[\cref{conj:mutation and quiver with potential}]
  Let $(Q,W)$ be a quiver with potential where $Q$ is a quiver such that $Q$ has no loops or 2-cycles. 
  Denote by $\mathcal{P}(Q,W)$ the Jacobian algebra associated with $(Q,W)$.
  Assume that $\dim (\mathcal{P}(Q,W)) < \infty$.
  Then there exists a mutation that maps $L=\Tors(\mathcal{P}(Q,W))$ to a lattice isomorphic to 
$L'=\Tors(\mathcal{P}(\mu_i(Q,W)))$.
Here, $\Tors(A)$ denotes the lattice of torsion classes of the category of finitely generated left $A$-modules, and $\mu_i(Q,W)$ is the mutation of $(Q,W)$ at a vertex $i$ of $Q$ \textnormal{\cite{MR2480710}}.
\end{conjecture} 
If both $Q,Q'$ are type-A Coxeter quivers, then both $\Tors(\mathcal{P}(Q,0))$ and $\Tors(\mathcal{P}(Q',0))$ are Cambrian lattices \cite{MR4579952}, and this is contained in the theorem stated above.
Hence, if this conjecture holds, Ordovician lattices can be described in terms of representation theory.

\subsection{Organization}
The organization of this paper is outlined below.

In \cref{sec:Orders and Lattices}, we collect preliminaries for this paper.
In \cref{sec:Flips and mutations}, we introduce flips and mutations. We give a necessary and sufficient condition for a flip to be a mutation.
In \cref{sec:Invariants of flips}, we introduce an invariant of flips.
In \cref{sec:Mutable lattices}, we introduce mutable lattices and investigate its properties.
In \cref{sec:Flips of the weak order of finite Coxeter groups}, we discuss the flip of weak order of a finite Coxeter group.
In \cref{sec:Flips of Cambrian lattices}, we study the flip of Cambrian lattices. We also introduce Ordovician lattices which includes Cambrian lattices.
In \cref{sec:A graphical description of flips of Cambrian lattices}, we discuss a graphical description of Cambrian lattices of type-A and type-B.
In \cref{sec:Lattice structures on the A_3 associahedron}, we state all lattice structures on $A_3$ associahedron. We also describe the relationships induced by mutations.

\section{Orders and lattices}
\label{sec:Orders and Lattices}
In this section, we review the basics of lattices, referring to \cite{MR1902334,Sokuron}.
In all sections, \textbf{we consider only those posets that have finite elements}.

A relation $\leq$ on a set $L$ is a partial order if the following conditions hold:
\begin{enumerate}
  \item $\forall x\in L \ x\leq x$
  \item $\forall x,y,z\in L \ x\leq y,y\leq z \Rightarrow x\leq z$
  \item $\forall x,y\in L \ x\leq y\leq x \Rightarrow x=y$
\end{enumerate}
We call $(L,\leq)$ \emph{partially ordered set (poset)}.

Let $x$ and $y$ be elements of a poset $L$. If $x\lneq y$ and $x\leq z\leq y\Rightarrow z\in \{x,y\}$ , then we call $y$ \emph{covers} $x$ , and we write $x\prec y$.

  For elements $a,b\in L$ that satisfy $a\leq b$, we define the \emph{interval} $[a,b]$ to be $\{x\in L\ | \ a\leq x\leq b\}$.

\begin{definition}
We call a sequence $x=p_0 \prec p_1 \prec p_2 \prec \cdots \prec p_{n-1} \prec p_n = y$ a length $n$ \emph{covering chain} from $x$ to $y$. 
\end{definition}
If $x=y$, covering chain from $x$ to $y$ is $\{x\}$, which length is $0$.
Let $L,L'$ be posets.
A map $f:L\to L'$ is an \emph{order homomorphism} if $\forall x,y\in L,\ x\leq_{L}y \ \Rightarrow \ f(x)\leq_{L'}f(y) $.
A map $f$ is an \emph{order isomorphism} if $f$ is bijective and both $f$ and $f^{-1}$ are order homomorphisms.

We denote the \emph{maximum} (resp.~the \emph{greatest element}) of $L$ by $1_L$ (or simply $1$).
Similarly, we denote the \emph{minimum} (resp.~the \emph{least element}) of $L$ by $0_L$ (or simply $0$).
An element $x\in L$ is an \emph{atom} (resp.~\emph{coatom}) if $x\succ 0$ (resp.~$x\prec 1$).

\begin{proposition}
  Let $L,L'$ be posets, and let $f:L\to L'$ be a surjective order homomorphism. If $L$ has the maximum $1_{L}$, $L'$ has the maximum $f(1_{L})$.
  \begin{proof}
    Since $f$ is a order homomorphism, we have $f(1_{L})\geq f(x)$ for all $x\in L$. Then $f(1_{L})$ is the maximum because $f$ is surjective.
  \end{proof}
\end{proposition}

A \emph{lattice} is a poset in which every pair of elements $x,y\in L$ has both a \emph{join} (i.e., least upper bound) and a \emph{meet} (i.e., greatest lower bound). 
We denote a join of $x,y$ by $x\vee y$, and a meet of $x,y$ by $x\wedge y$.

Every finite lattice has the maximum and the minimum element.

Let $L,L'$ be lattices. A map $f:L\to L'$ is a \emph{lattice homomorphism} if $\forall x,y \in L \ f(x\vee y)=f(x)\vee f(y),\ f(x\wedge y)=f(x)\wedge f(y)$.

A map $f$ is a \emph{lattice isomorphism} if $f$ is a lattice homomorphism and bijective.
If $f$ is a lattice isomorphism, then $f^{-1}$ is too.
Every lattice homomorphism is an order homomorphism. 

\begin{proposition}
  Let $f:L\to L'$ be surjective lattice homomorphism, and let $x\prec_{L} y$. Then $f(x)=f(y)$ or $f(x) \prec_{L'} f(y)$.
  \begin{proof}
    We prove the statement by contradiction. By assumption, there exists an element $z'$ satisfying $f(x)\lneq z' \lneq f(y)$.
    Let $z$ be an element satisfying $f(z)=z'$, and let $w$ as $(z\vee x)\wedge y$.
    Then $x\leq w\leq y$, hence $w=x$ or $w=y$.
    On the other hand, equation $f(w)=(f(z)\vee f(x))\wedge f(y)=f(z)$ can be derived, which is a contradiction.
  \end{proof}  
\end{proposition}

\begin{definition}
  Let $L$ be a lattice. A subset $S\subset L$ is an \emph{ideal} if the following conditions hold:
  \begin{enumerate}
    \item $\forall s,t\in S, s\vee t\in S$
    \item $\forall s\in S, x\in L, \ x\leq s\Rightarrow x\in S$
  \end{enumerate}
\end{definition}

If $f:L\to L'$ is surjective lattice homomorphism and $S\subset L$ is an ideal, $f(S)\subset L'$ is an ideal.

\begin{definition}
  A sequence $x_0,x_1,\cdots,x_n$ in $L$ is said to be a \emph{covering path} from $x_0$ to $x_n$ if $x_i\prec x_{i+1} \ or \ x_i \succ x_{i+1}$ for all $i$.
  We say $x_0,x_1,\cdots,x_n$ has $k$ ascending covers where $k=\#\{i\ |\ x_i\prec x_{i+1}\}$.
  Similarly, we say $x_0,x_1,\cdots,x_n$ has $k$ descending covers where $k=\#\{i\ |\ x_i\succ x_{i+1}\}$.
\end{definition}

A \emph{quiver} is a directed graph.
Let $L$ be a poset. Define $\Hasse(L)$ be a quiver as follows;
\begin{enumerate}
  \item Each vertex is labelled by an element of $L$.
  \item Draw an edge $x\to y$ if $x\succ y$.
\end{enumerate}
We call $\Hasse(L)$ \emph{Hasse quiver of $L$}.

A lattice $L$ is a \emph{polygon} \cite{MR3645055} if undirected graph of $\Hasse(L)$ is a cycle.
A polygon $L$ has exactly two covering chains from $0$ to $1$.
We define a \emph{$(m,n)$-polygon} to be a polygon whose two covering chains from $0$ to $1$ have lengths $m$ and $n$, respectively.

\begin{lemma}
\label{cycle in lattice}
Let $L$ be a lattice.
  \begin{enumerate}
    \item Let $(a,b,c,d,a)$ be a covering path in $L$. Then $\{a,b,c,d\}$ is a $(2,2)$-polygon.
    \item Let $(a,b,c,d,e,a)$ be a covering path in $L$. Then $\{a,b,c,d,e\}$ is a $(2,3)$-polygon.
    \item Let $(a_1,a_2,\cdots,a_n,a_1)$ be a covering path in $L$ which has at most two descending covers. Then $\{a_1,a_2,\cdots,a_n\}$ is a $(2,n-2)$-polygon.
  \end{enumerate}
  \begin{proof}
    (3) 
    We prove the statement by contradiction. By assumption, there exists an element $j\neq 1,2,n$ satisfying that $(a_1,a_2),(a_j,a_{j+1})$ are descending covers. Hence $a_2\leq a_1\wedge a_j \leq a_j$. If $a_j\geq a_1$, then $a_{j+1}\lneq a_1 \lneq a_j$, which contradicts to $a_{j+1}\prec a_j$. We thus get $a_1\wedge a_j=a_2$, which contradicts the fact that $a_{j+1}\nleq a_2$ is lower than $a_1,a_j$.
    (1) and (2) follows immediately from (3).
  \end{proof}
\end{lemma}

\section{Flips and mutations}
\label{sec:Flips and mutations}
In this section, we define a \emph{flip} for posets, then study its properties.
In particular, we call it a \emph{mutation} if it is between lattices, and study its necessary and sufficient condition.

\begin{definition}
\label{flip pair}
Let $L$ be a poset. Assume that $A,B\subset L$ have the following properties: \relax
\[ L=A\cup B,\  \emptyset = A\cap B, \]
\[ \forall x \in A , \forall y \in B \ \ x\ngeq y. \] 
Then we call $(A,B)$ a \emph{flip pair} of $L$. Similarly, we call $A$ \emph{footwall}, and $B$ \emph{hanging wall}.
\end{definition}

\begin{definition}
\label{definition of flip}
Let $(A,B)$ be a flip pair of $L$.
Then we define a set with a relation $(L',\leq_{L'})$ by $L'=L$ as a set, and
\begin{align*}
  x \leq_{L'} y \ \Leftrightarrow & \ \exists x_0,x_1,\cdots x_n, \\
  & \text{such that} \  x_0=x,\ x_n=y,\ x_0 \prec_{L'} x_1 \prec_{L'} \cdots \prec_{L'} x_n.
\end{align*}
Here, $\prec_{L'}$ is defined as:
\begin{align*}
  x \prec_{L'} y \ \Leftrightarrow & \ (x,y\in A \ \text{and} \  x \prec_{L} y) \\
  \ & \text{or} \ (x,y\in B \ \text{and} \  x \prec_{L} y) \\
  \ & \text{or} \  (x\in B, y \in A \ \text{and} \  y \prec_{L} x)
\end{align*}
We refer to $(L',\leq_{L'})$ as the \emph{flip} of $L$ by a flip pair $(A,B)$.
Roughly speaking, $(L',\leq_{L'})$ is obtained from $(L,\leq_L)$ by reversing all cover relations between $A$ and $B$.
We denote $(L',\leq_{L'})$ by $\mu_{(A,B)}(L,\leq_L)$; we also use $L'=\mu_{(A,B)}(L)$ and $L\xmapsto[]{\mu_{(A,B)}}L'$.
\end{definition}

When there is no risk of confusion, we denote $\mu_{(A,B)}$ by $\mu$.

\begin{remark}
  When necessary to avoid conflicts with other notation, we denote filp by \emph{faulting flip}.
  The set $L'$, constructed from $(L,\leq_L)$ via a flip $\mu_{(A,B)}$, inherits the labeling of $L$.
  When we say that $L$ and $L'$ are isomorphic (denoted $L\cong L'$), we disregard the labeling.
  In contrast, when we write $L=L'$, we mean that, if both are equipped with labelings, they are identical including the labeling.
\end{remark}

\begin{lemma} Suppose that $L\xmapsto[]{\mu_{(A,B)}}L'$. Then the following hold:
\begin{enumerate}
  \item $(L',\leq_{L'})$ is a poset.
  \item If $x,y\in A$, then $x\leq_{L}y \Leftrightarrow x\leq_{L'}y$.
  \item If $x,y\in B$, then $x\leq_{L}y \Leftrightarrow x\leq_{L'}y$.
  \item $\prec_{L'}$ defines a cover relation of $(L',\leq_{L'})$.
  \[ \textnormal{i.e.,} \ on \ L',\  y \ covers \ x \ \Leftrightarrow \  x\prec_{L'} y \]
  \item Underlying undirected graphs of $\Hasse(L)$ and $\Hasse(L')$ are identical, including the labeling.
\end{enumerate}
\begin{proof}
(1) By definition, no two elements $x\in A, y\in B$ satisfy $x\leq_{L'}y$. Since $A$ and $B$ are posets, $(L',\leq_{L'})$ is a poset.

(2),(3) $(\Rightarrow)$ By definition, both the restriction of $\leq_{L}$ to $A$ and to $B$ are invariant under $\mu_{(A,B)}$.

$(\Leftarrow)$ By assumption, we take $x_0=x\ ,\ x_n=y\ ,\ x_0 \prec_{L'} x_1 \prec_{L'} \cdots \prec_{L'} x_n$.

By definition of $\prec_{L'}$, if $x_i\in A$ holds for some $i$, then $x_j\in A$ holds for any $j\geq i$.

Hence, if $x_0\in A$, then this sequence are consisting of elements of $A$.
Therefore, $x\leq_{L}y$.

The case $x_n\in B$ is the same.

(4) $(\Rightarrow)$ By definition.
$(\Leftarrow)$
We take arbitrary $x\prec_{L'} y$.
From (2) and (3), the claim holds when $x,y\in A$ or $x,y\in B$. 
Hence, we may assume $x\in B,\ y\in A$.
Every sequence $x_0=x\ ,\ x_n=y\ ,\ x_0 \prec_{L'} x_1 \prec_{L'} \cdots \prec_{L'} x_n$ contains exactly one $i$ such that $x_i\in B,x_{i+1}\in A$.
Since \[x_i\succ_{L} x_{i+1} \leq_{L} x_n \prec_{L} x_0 \leq_{L} x_i \] holds, $x_i=x_0,x_{i+1}=x_n$ follows from covering property.
Therefore $n=1$, and $\prec_{L'}$ indeed defines a cover relation of $(L',\leq_{L'})$.
(5) follows from (4).
\end{proof}
\label{After flip}
\end{lemma}

\begin{lemma}
In the above setting, $(B,A)$ is a flip pair of $L'$. Moreover, $L'\xmapsto[]{\mu_{(B,A)}}L$. 
\begin{proof}
The first assertion follows from the proof of (1) of \cref{After flip}, and the second is easy to check.
\end{proof}
\end{lemma}

We refer to $L'\xmapsto[]{\mu_{(B,A)}}L$ as \emph{dual flip} of $L\xmapsto[]{\mu_{(A,B)}}L'$.

\begin{definition}
Let $(A,B)$ be a flip pair of $L$.
We define \emph{fault plane}s $\partial A,\partial B$ as follows:
\[ \partial A = \{ x\in A \ | \  \exists y \in B, \ \& \  x \prec_{L} y \}\]
\[ \partial B = \{ y\in B \ | \  \exists x \in A, \ \& \  x \prec_{L} y \}\]
\end{definition}

\begin{remark}
We comment on why the new concept is named a flip, referring to the name of \emph{Flip-Flop} introduced by S.~Ladkani \cite{ladkani2007universalderivedequivalencesposets}.

Let $A,B$ be posets, and let $f:A\to B$ be an order homomorphism.
We define $(A\sqcup B,\leq_{+}^{f})$ and $(A\sqcup B,\leq_{-}^{f})$ to agree with the original relations in $A$ and $B$, respectively, and to be given as follows under the conditions $x\in A$ and $y\in B$:
\[ x\leq_{+}^{f} y \Leftrightarrow f(x)\leq y\ \ ,\ \  y\leq_{-}^{f} x \Leftrightarrow y \leq f(x) \]
A Flip-Flop is exchanging two posets $(A\sqcup B,\leq_{+}^{f}),(A\sqcup B,\leq_{-}^{f})$.

Flip-Flops and flips are independent concepts.
Flips cannot change the underlying undirected graph of its Hasse quiver, where Flip-Flops can.
On the other hand, for any element $x\in A$, Flip-Flops yield an element $y\in B$ such that the relation on a poset satisfies $x\leq y$ in one case and $y\leq x$ in the other, whereas flips does not necessarily do so.
\begin{figure}
  \centering
  \begin{tabular}{ccc}
  \begin{minipage}{0.3\linewidth}
  \centering
    \begin{tikzpicture}[scale=0.5]
\node[draw,shape=circle,inner sep=2pt] (X0) at (0,0) {};
\node[draw,shape=circle,inner sep=2pt] (Xp) at (-2,2) {};
\node[draw,shape=circle,inner sep=2pt] (Xq) at (2,2) {};
\node[draw,shape=circle,inner sep=2pt] (Xr) at (0,4) {};
\node[draw,shape=circle,inner sep=2pt] (X1) at (0,6) {};
\node[draw,shape=circle,inner sep=2pt] (Y) at (5,3) {};

\draw[<-] (X0)--(Xp);
\draw[<-] (X0)--(Xq);
\draw[<-] (Xp)--(Xr);
\draw[<-] (Xq)--(Xr);
\draw[<-] (Xr)--(X1);

\draw[->,dotted,blue=10] (X0)--(Y);
\draw[->,dotted,blue=10] (Xp)--(Y);
\draw[->,dotted,blue=10] (Xq)--(Y);
\draw[->,dotted,blue=10] (Xr)--(Y);
\draw[->,dotted,blue=10] (X1)--(Y);

    \end{tikzpicture}
    \subcaption{Let $A$ be a lattice structure on left side, and let $B$ be a dot on right side. Blue dotted lines represent $f$}
  \end{minipage}&
  \begin{minipage}{0.3\linewidth}
  \centering
    \begin{tikzpicture}[scale=0.5]
\node[draw,shape=circle,inner sep=2pt] (X0) at (0,0) {};
\node[draw,shape=circle,inner sep=2pt] (Xp) at (-2,2) {};
\node[draw,shape=circle,inner sep=2pt] (Xq) at (2,2) {};
\node[draw,shape=circle,inner sep=2pt] (Xr) at (0,4) {};
\node[draw,shape=circle,inner sep=2pt] (X1) at (0,6) {};
\node[draw,shape=circle,inner sep=2pt] (Y) at (0,8) {};

\draw[<-] (X0)--(Xp);
\draw[<-] (X0)--(Xq);
\draw[<-] (Xp)--(Xr);
\draw[<-] (Xq)--(Xr);
\draw[<-] (Xr)--(X1);
\draw[<-,blue] (X1)--(Y);

    \end{tikzpicture}
    \subcaption{A order $\leq^f_{+}$. Blue dotted lines for covering relations between $A$ and $B$}
  \end{minipage}&
  \begin{minipage}{0.3\linewidth}
  \centering
    \begin{tikzpicture}[scale=0.5]
\node[draw,shape=circle,inner sep=2pt] (X0) at (0,0) {};
\node[draw,shape=circle,inner sep=2pt] (Xp) at (-2,2) {};
\node[draw,shape=circle,inner sep=2pt] (Xq) at (2,2) {};
\node[draw,shape=circle,inner sep=2pt] (Xr) at (0,4) {};
\node[draw,shape=circle,inner sep=2pt] (X1) at (0,6) {};
\node[draw,shape=circle,inner sep=2pt] (Y) at (0,-2) {};

\draw[<-] (X0)--(Xp);
\draw[<-] (X0)--(Xq);
\draw[<-] (Xp)--(Xr);
\draw[<-] (Xq)--(Xr);
\draw[<-] (Xr)--(X1);
\draw[<-,blue] (Y)--(X0);

    \end{tikzpicture}
  \subcaption{order$\leq^f_{-}$. Blue dotted lines represent cover relations between $A$ and $B$}
  \end{minipage}
  \end{tabular}
  \caption{An operation that is a Flip-Flop but not a flip. The underlying undirected graphs of $\leq^f_{+}$ and $\leq^f_{-}$ are different.}
  \label{fig:Flip-Flop but not flip}
\end{figure}

\begin{figure}
  \centering
  \begin{tabular}{cc}
  \begin{minipage}{0.4\linewidth}
  \centering
    \begin{tikzpicture}
\node[draw,shape=circle,inner sep=2pt] (X0) at (0,0) {};
\node[draw,shape=circle,inner sep=2pt] (Xa) at (-1,2) {};
\node[draw,shape=circle,inner sep=2pt] (X1) at (0,4) {};
\node[draw,shape=circle,inner sep=2pt] (Y0) at (2,1) {};
\node[draw,shape=circle,inner sep=2pt] (Ya) at (3,3) {};
\node[draw,shape=circle,inner sep=2pt] (Y1) at (2,5) {};

\draw[<-] (X0)--(Xa);
\draw[<-] (Xa)--(X1);
\draw[<-] (Y0)--(Ya);
\draw[<-] (Ya)--(Y1);

\draw[<-,blue] (X0)--(Y0);
\draw[<-,blue] (X1)--(Y1);
    \end{tikzpicture}
  \end{minipage}&
  \begin{minipage}{0.4\linewidth}
  \centering
    \begin{tikzpicture}
\node[draw,shape=circle,inner sep=2pt] (X0) at (0,1) {};
\node[draw,shape=circle,inner sep=2pt] (Xa) at (-1,3) {};
\node[draw,shape=circle,inner sep=2pt] (X1) at (0,5) {};
\node[draw,shape=circle,inner sep=2pt] (Y0) at (2,0) {};
\node[draw,shape=circle,inner sep=2pt] (Ya) at (3,2) {};
\node[draw,shape=circle,inner sep=2pt] (Y1) at (2,4) {};

\draw[<-] (X0)--(Xa);
\draw[<-] (Xa)--(X1);
\draw[<-] (Y0)--(Ya);
\draw[<-] (Ya)--(Y1);

\draw[->,blue] (X0)--(Y0);
\draw[->,blue] (X1)--(Y1);
    \end{tikzpicture}
  \end{minipage}
  \end{tabular}
  \caption{An operation that is a flip but not a Flip-Flop. The direction of blue dotted lines are changed by this operation}
  \label{fig:flip but not Flip-Flop}
\end{figure}
\end{remark}

\begin{example}
We show examples of a flip in \cref{fig:image of a flip,fig:mutation}.
\end{example}

  \begin{figure}[htbp]
\centering
\begin{tabular}{cc}
  \begin{minipage}[t]{0.3\linewidth}
  \centering
  \begin{tikzpicture}[scale=0.3]

\coordinate (A1) at (-4,0);
\coordinate (A2) at (0,0);
\coordinate (A3) at (0,-8);
\coordinate (A4) at (-4,-8);
\coordinate (A5) at (-1.5,0);
\coordinate (A6) at (-1.5,-8);
\coordinate (B1) at (4,-2);
\coordinate (B2) at (0,-2);
\coordinate (B3) at (0,-10);
\coordinate (B4) at (4,-10);
\coordinate (B5) at (1.5,-2);
\coordinate (B6) at (1.5,-10);

\draw[draw=none,fill=blue!10] (A5)--(A2)--(A3)--(A6);
\draw[draw=none,fill=blue!10] (B5)--(B2)--(B3)--(B6);
\draw[gray!30] (A1)--(A2)--(A3)--(A4);
\draw[gray!30] (B1)--(B2)--(B3)--(B4);

\coordinate (X1) at (-1,-3);
\coordinate (X2) at (-1,-5);
\coordinate (X3) at (-2.5,-7);
\coordinate (Y1) at (2.5,-3);
\coordinate (Y2) at (1,-5);
\coordinate (Y3) at (1,-7);

\draw[line width=0.9pt] (X1)--(X2)--(X3);
\draw[line width=0.9pt] (Y1)--(Y2)--(Y3);

\draw[densely dotted,blue,line width=1.5pt] (X1)--(Y2);
\draw[densely dotted,blue,line width=1.5pt] (X2)--(Y3);

\end{tikzpicture}
  \end{minipage} &
  
  \begin{minipage}[t]{0.3\linewidth}
  \centering
  \begin{tikzpicture}[scale=0.3]

\coordinate (A1) at (-4,-2);
\coordinate (A2) at (0,-2);
\coordinate (A3) at (0,-10);
\coordinate (A4) at (-4,-10);
\coordinate (A5) at (-1.5,-2);
\coordinate (A6) at (-1.5,-10);
\coordinate (B1) at (4,0);
\coordinate (B2) at (0,0);
\coordinate (B3) at (0,-8);
\coordinate (B4) at (4,-8);
\coordinate (B5) at (1.5,-0);
\coordinate (B6) at (1.5,-8);

\draw[draw=none,fill=blue!10] (A5)--(A2)--(A3)--(A6);
\draw[draw=none,fill=blue!10] (B5)--(B2)--(B3)--(B6);
\draw[gray!30] (A1)--(A2)--(A3)--(A4);
\draw[gray!30] (B1)--(B2)--(B3)--(B4);

\coordinate (X1) at (-1,-5);
\coordinate (X2) at (-1,-7);
\coordinate (X3) at (-2.5,-9);
\coordinate (Y1) at (2.5,-1);
\coordinate (Y2) at (1,-3);
\coordinate (Y3) at (1,-5);

\draw[line width=0.9pt] (X1)--(X2)--(X3);
\draw[line width=0.9pt] (Y1)--(Y2)--(Y3);

\draw[densely dotted,blue,line width=1.5pt] (X1)--(Y2);
\draw[densely dotted,blue,line width=1.5pt] (X2)--(Y3);

\end{tikzpicture}
  \end{minipage}
\end{tabular}
\caption{An image of a flip}
\label{fig:image of a flip}
\end{figure}

\begin{figure}[htbp]
\centering
\begin{tabular}{cc}
  \begin{minipage}[t]{0.3\linewidth}
  \centering
\begin{tikzpicture}[scale=0.3]

\node[name=A0,draw,inner sep=2pt,circle] at (3,-4) {};
\node[name=A1,draw,inner sep=2pt,circle] at (5,0) {};
\node[name=A2,draw,inner sep=2pt,circle] at (1,0) {};
\node[name=A3,draw,inner sep=2pt,circle] at (5,2) {};
\node[name=A4,draw,inner sep=2pt,circle] at (1,4) {};
\node[name=A5,draw,inner sep=2pt,circle] at (1,6) {};
\node[name=A6,draw,inner sep=2pt,circle] at (3,8) {};
\node[name=B0,draw,inner sep=2pt,circle] at (-3,-6) {};
\node[name=B1,draw,inner sep=2pt,circle] at (-1,-4) {};
\node[name=B2,draw,inner sep=2pt,circle] at (-1,-2) {};
\node[name=B3,draw,inner sep=2pt,circle] at (-5,0) {};
\node[name=B4,draw,inner sep=2pt,circle] at (-1,2) {};
\node[name=B5,draw,inner sep=2pt,circle] at (-5,2) {};
\node[name=B6,draw,inner sep=2pt,circle] at (-3,6) {};

\draw[black,<-] (A0)--(A1);
\draw[black,<-] (A0)--(A2);
\draw[black,<-] (A1)--(A3);
\draw[black,<-] (A1)--(A5);
\draw[black,<-] (A2)--(A4);
\draw[black,<-] (A4)--(A5);
\draw[black,<-] (A3)--(A6);
\draw[black,<-] (A5)--(A6);

\draw[black,<-] (B0)--(B1);
\draw[black,<-] (B0)--(B3);
\draw[black,<-] (B1)--(B2);
\draw[black,<-] (B1)--(B5);
\draw[black,<-] (B2)--(B4);
\draw[black,<-] (B3)--(B5);
\draw[black,<-] (B4)--(B6);
\draw[black,<-] (B5)--(B6);

\draw[cyan,->] (A0)--(B0);
\draw[cyan,->] (A2)--(B2);
\draw[cyan,->] (A3)--(B3);
\draw[cyan,->] (A4)--(B4);
\draw[cyan,->] (A6)--(B6);

\end{tikzpicture}
  \end{minipage} &
  
  \begin{minipage}[t]{0.3\linewidth}
  \centering
  \begin{tikzpicture}[scale=0.3]

\node[name=A0,draw,inner sep=2pt,circle] at (3,-8) {};
\node[name=A1,draw,inner sep=2pt,circle] at (5,-4) {};
\node[name=A2,draw,inner sep=2pt,circle] at (1,-4) {};
\node[name=A3,draw,inner sep=2pt,circle] at (5,-2) {};
\node[name=A4,draw,inner sep=2pt,circle] at (1,0) {};
\node[name=A5,draw,inner sep=2pt,circle] at (1,2) {};
\node[name=A6,draw,inner sep=2pt,circle] at (3,4) {};
\node[name=B0,draw,inner sep=2pt,circle] at (-3,-6) {};
\node[name=B1,draw,inner sep=2pt,circle] at (-1,-4) {};
\node[name=B2,draw,inner sep=2pt,circle] at (-1,-2) {};
\node[name=B3,draw,inner sep=2pt,circle] at (-5,0) {};
\node[name=B4,draw,inner sep=2pt,circle] at (-1,2) {};
\node[name=B5,draw,inner sep=2pt,circle] at (-5,2) {};
\node[name=B6,draw,inner sep=2pt,circle] at (-3,6) {};

\draw[black,<-] (A0)--(A1);
\draw[black,<-] (A0)--(A2);
\draw[black,<-] (A1)--(A3);
\draw[black,<-] (A1)--(A5);
\draw[black,<-] (A2)--(A4);
\draw[black,<-] (A4)--(A5);
\draw[black,<-] (A3)--(A6);
\draw[black,<-] (A5)--(A6);

\draw[black,<-] (B0)--(B1);
\draw[black,<-] (B0)--(B3);
\draw[black,<-] (B1)--(B2);
\draw[black,<-] (B1)--(B5);
\draw[black,<-] (B2)--(B4);
\draw[black,<-] (B3)--(B5);
\draw[black,<-] (B4)--(B6);
\draw[black,<-] (B5)--(B6);

\draw[cyan,<-] (A0)--(B0);
\draw[cyan,<-] (A2)--(B2);
\draw[cyan,<-] (A3)--(B3);
\draw[cyan,<-] (A4)--(B4);
\draw[cyan,<-] (A6)--(B6);

\end{tikzpicture}
  \end{minipage}
\end{tabular}
\caption{An example of a mutation}
\label{fig:mutation}
\end{figure}

We provide another elementally property of a flip.
Before stating the proposition, we introduce an operation called \emph{BGP-reflection}.
Let $Q$ be a quiver, and $i\in Q$ be a sink (i.e., no edge starts at $i$) or a source (i.e., no edge ends at $i$).
Let $\mu_i(Q)$ be the graph obtained from $Q$ by reversing the direction of every edge incident to $i$.
We call $\mu_i$ the BGP-reflection on $i$.

\begin{proposition}
The flip $L\xmapsto[]{\mu_{(A,B)}}L'$ can be written by a finite sequence of BGP-reflections from $\Hasse(L)$ to $\Hasse(L')$.
\begin{proof}
  Take $A=\{a_1,a_2,a_3,\cdots, a_k\}$ such that $i\lneq j \Rightarrow a_i\ngeq a_j$.
  We prove by mathematical induction that BGP-reflection can be performed from $a_1$ through $a_k$.
  Base case:
  We verify that the statement holds for $i=1$.
  We can apply a BGP-reflection to $a_1$ since $a_1$ is minimal.
  Induction hypothesis:
  Assume that the statement holds for $i\leq j$. We show that the statement also holds for $i=j+1$.
  After applying a BGP-reflection to $a_j$, $a_{j+1}$ is sink because all outgoing edges from $a_{j+1}$ on $L$ is reversed, whereas incoming edges to $a_{j+1}$ remain unchanged.
  After all operations are completed, only the edges between $A$ and $B$ reverse their direction.
\end{proof}
\begin{figure}
\centering
\begin{tabular}{ccc}
\begin{minipage}{0.3\linewidth}
\centering
    \begin{tikzpicture}[scale=0.4]
\node[draw,circle,inner sep=2pt] (A0) at (0,0) {};
\node[draw,circle,inner sep=2pt] (Ap) at (-2,2) {};
\node[draw,circle,inner sep=2pt] (Aq) at (2,2) {};
\node[draw,circle,inner sep=2pt] (A1) at (0,4) {};
\node[draw,circle,inner sep=2pt] (B0) at (5,2) {};
\node[draw,circle,inner sep=2pt] (Bp) at (3,4) {};
\node[draw,circle,inner sep=2pt] (Bq) at (7,4) {};
\node[draw,circle,inner sep=2pt] (B1) at (5,6) {};

\draw[<-] (A0)--(Ap);
\draw[<-] (A0)--(Aq);
\draw[<-] (Ap)--(A1);
\draw[<-] (Aq)--(A1);
\draw[<-] (B0)--(Bp);
\draw[<-] (B0)--(Bq);
\draw[<-] (Bp)--(B1);
\draw[<-] (Bq)--(B1);

\draw[<-,cyan] (A0)--(B0);
\draw[<-,cyan] (Ap)--(Bp);
\draw[<-,cyan] (Aq)--(Bq);
\draw[<-,cyan] (A1)--(B1);
    
  \end{tikzpicture}
  
  \end{minipage}&
  \begin{minipage}{0.3\linewidth}
  \centering
    \begin{tikzpicture}[scale=0.4]

\node[draw,circle,inner sep=2pt,fill=blue] (A0) at (0,0) {};
\node[draw,circle,inner sep=2pt] (Ap) at (-2,2) {};
\node[draw,circle,inner sep=2pt] (Aq) at (2,2) {};
\node[draw,circle,inner sep=2pt] (A1) at (0,4) {};
\node[draw,circle,inner sep=2pt] (B0) at (5,2) {};
\node[draw,circle,inner sep=2pt] (Bp) at (3,4) {};
\node[draw,circle,inner sep=2pt] (Bq) at (7,4) {};
\node[draw,circle,inner sep=2pt] (B1) at (5,6) {};

\draw[->,blue] (A0)--(Ap);
\draw[->,blue] (A0)--(Aq);
\draw[<-] (Ap)--(A1);
\draw[<-] (Aq)--(A1);
\draw[<-] (B0)--(Bp);
\draw[<-] (B0)--(Bq);
\draw[<-] (Bp)--(B1);
\draw[<-] (Bq)--(B1);

\draw[->,blue] (A0)--(B0);
\draw[<-,cyan] (Ap)--(Bp);
\draw[<-,cyan] (Aq)--(Bq);
\draw[<-,cyan] (A1)--(B1);
    
  \end{tikzpicture}
  
  \end{minipage}&
  \begin{minipage}{0.3\linewidth}
  \centering
    \begin{tikzpicture}[scale=0.4]

\node[draw,circle,inner sep=2pt] (A0) at (0,0) {};
\node[draw,circle,inner sep=2pt,fill=blue] (Ap) at (-2,2) {};
\node[draw,circle,inner sep=2pt] (Aq) at (2,2) {};
\node[draw,circle,inner sep=2pt] (A1) at (0,4) {};
\node[draw,circle,inner sep=2pt] (B0) at (5,2) {};
\node[draw,circle,inner sep=2pt] (Bp) at (3,4) {};
\node[draw,circle,inner sep=2pt] (Bq) at (7,4) {};
\node[draw,circle,inner sep=2pt] (B1) at (5,6) {};

\draw[<-,blue] (A0)--(Ap);
\draw[->] (A0)--(Aq);
\draw[->,blue] (Ap)--(A1);
\draw[<-] (Aq)--(A1);
\draw[<-] (B0)--(Bp);
\draw[<-] (B0)--(Bq);
\draw[<-] (Bp)--(B1);
\draw[<-] (Bq)--(B1);

\draw[->,cyan] (A0)--(B0);
\draw[->,blue] (Ap)--(Bp);
\draw[<-,cyan] (Aq)--(Bq);
\draw[<-,cyan] (A1)--(B1);
    
  \end{tikzpicture}
  \end{minipage}\\
  \begin{minipage}{0.3\linewidth}
  \centering
    \begin{tikzpicture}[scale=0.4]

\node[draw,circle,inner sep=2pt] (A0) at (0,0) {};
\node[draw,circle,inner sep=2pt] (Ap) at (-2,2) {};
\node[draw,circle,inner sep=2pt,fill=blue] (Aq) at (2,2) {};
\node[draw,circle,inner sep=2pt] (A1) at (0,4) {};
\node[draw,circle,inner sep=2pt] (B0) at (5,2) {};
\node[draw,circle,inner sep=2pt] (Bp) at (3,4) {};
\node[draw,circle,inner sep=2pt] (Bq) at (7,4) {};
\node[draw,circle,inner sep=2pt] (B1) at (5,6) {};

\draw[<-] (A0)--(Ap);
\draw[<-,blue] (A0)--(Aq);
\draw[->] (Ap)--(A1);
\draw[->,blue] (Aq)--(A1);
\draw[<-] (B0)--(Bp);
\draw[<-] (B0)--(Bq);
\draw[<-] (Bp)--(B1);
\draw[<-] (Bq)--(B1);

\draw[->,cyan] (A0)--(B0);
\draw[->,cyan] (Ap)--(Bp);
\draw[->,blue] (Aq)--(Bq);
\draw[<-,cyan] (A1)--(B1);
    
  \end{tikzpicture}
  \end{minipage}&
  \begin{minipage}{0.3\linewidth}
  \centering
    \begin{tikzpicture}[scale=0.4]

\node[draw,circle,inner sep=2pt] (A0) at (0,0) {};
\node[draw,circle,inner sep=2pt] (Ap) at (-2,2) {};
\node[draw,circle,inner sep=2pt] (Aq) at (2,2) {};
\node[draw,circle,inner sep=2pt,fill=blue] (A1) at (0,4) {};
\node[draw,circle,inner sep=2pt] (B0) at (5,2) {};
\node[draw,circle,inner sep=2pt] (Bp) at (3,4) {};
\node[draw,circle,inner sep=2pt] (Bq) at (7,4) {};
\node[draw,circle,inner sep=2pt] (B1) at (5,6) {};

\draw[<-] (A0)--(Ap);
\draw[<-] (A0)--(Aq);
\draw[<-,blue] (Ap)--(A1);
\draw[<-,blue] (Aq)--(A1);
\draw[<-] (B0)--(Bp);
\draw[<-] (B0)--(Bq);
\draw[<-] (Bp)--(B1);
\draw[<-] (Bq)--(B1);

\draw[->,cyan] (A0)--(B0);
\draw[->,cyan] (Ap)--(Bp);
\draw[->,cyan] (Aq)--(Bq);
\draw[->,blue] (A1)--(B1);
    
  \end{tikzpicture}
  \end{minipage}&
  \begin{minipage}{0.3\linewidth}
  \centering
    \begin{tikzpicture}[scale=0.4]

\node[draw,circle,inner sep=2pt] (A0) at (0,0) {};
\node[draw,circle,inner sep=2pt] (Ap) at (-2,2) {};
\node[draw,circle,inner sep=2pt] (Aq) at (2,2) {};
\node[draw,circle,inner sep=2pt] (A1) at (0,4) {};
\node[draw,circle,inner sep=2pt] (B0) at (5,2) {};
\node[draw,circle,inner sep=2pt] (Bp) at (3,4) {};
\node[draw,circle,inner sep=2pt] (Bq) at (7,4) {};
\node[draw,circle,inner sep=2pt] (B1) at (5,6) {};

\draw[<-] (A0)--(Ap);
\draw[<-] (A0)--(Aq);
\draw[<-] (Ap)--(A1);
\draw[<-] (Aq)--(A1);
\draw[<-] (B0)--(Bp);
\draw[<-] (B0)--(Bq);
\draw[<-] (Bp)--(B1);
\draw[<-] (Bq)--(B1);

\draw[->,cyan] (A0)--(B0);
\draw[->,cyan] (Ap)--(Bp);
\draw[->,cyan] (Aq)--(Bq);
\draw[->,cyan] (A1)--(B1);
    
  \end{tikzpicture}
  \end{minipage}
  
\end{tabular}
  
  \caption{Flip written by a finite sequence of BGP-reflections. Black lines represent two lattices $A,B$. The direction of cyan lines are changed.}
  \label{fig:factorization into BGP-reflection}
\end{figure}
\end{proposition}

In what follows, we consider in particular the case where $L$ is a lattice.

\begin{lemma}
Let $(A,B)$ be a flip pair of a lattice $L$. Then the following hold:
\begin{enumerate}
  \item $A$ is closed under $\wedge$, and $B$ is closed under $\vee$.
  \item $A$ contains $0$, and $B$ contains $1$.
\end{enumerate}
\begin{proof} (1) Let $x,y$ be elements such that $x\in A$ and $y\leq x$. By definition of a flip, $y\in A$. A similar argument holds for $B$.
(2) If $A$ does not contain $0$, then $A$ is empty set, which contradicts to the definition of a flip. A similar argument holds for $B$.
\end{proof}
\label{A contains 0}
\end{lemma}

\begin{definition}
  Let $L\xmapsto[]{\mu_{(A,B)}}L'$ be a flip of a lattice $L$. We now introduce the following definitions:
  \begin{enumerate}
    \item $\mu_{(A,B)}$ is a \emph{mutation} if $L'$ is a lattice.
    \item $\mu_{(A,B)}$ satisfies the \emph{atom-coatom(AC) condition} if $A$ contains a coatom $a'$ and $B$ contains an atom $a$.
    \item $\mu_{(A,B)}$ satisfies the \emph{sublattice condition} if $A$ is closed under $\vee$ and $B$ is closed under $\wedge$.
    \item $\mu_{(A,B)}$ satisfies the \emph{$\partial$-sublattice condition} if $\partial A$ is closed under $\vee$ and $\partial B$ is closed under $\wedge$.
  \end{enumerate}
\end{definition}

\begin{remark}
  The term mutation is named after the mutation of a quiver (cf.~\cref{conj:mutation and quiver with potential}).
\end{remark}

\begin{proposition}
  Let $L\xmapsto[]{\mu_{(A,B)}}L'$ be a flip of a lattice $L$ satisfying the AC condition.
  Then, $a'\in \partial A$ and $a\in \partial B$.
  Here, $a$ and $a'$ are taken to satisfy the AC condition.
  \begin{proof}
    By \cref{A contains 0}, $0\prec a$ is in $\partial B$. A similar argument applies to $a'$.
  \end{proof}
\end{proposition}

\begin{proposition}
  Let $L\xmapsto[]{\mu_{(A,B)}}L'$ be a flip of a lattice $L$ satisfying the $\partial$-sublattice condition.
  Then, $\mu_{(A,B)}$ satisfies the sublattice condition.
  \begin{proof}
    We prove the sublattice condition by contradiction.
    Let $x,y\in A,\ z\in B$ satisfy $x\vee y=z$.
    For any covering chain $x=x_0,\cdots,x_n=z$, there exists $i$ such that $A\ni x_i\prec x_{i+1}\in B$.
    Then $x_i\in \partial A$. By the same argument, we obtain $y_j\in \partial A$ satisfying $y\leq y_j \leq z$.
    Hence $x\vee y \leq x_i\vee y_j \in \partial A \subset A$. Since the definition of a flip, $A$ is downward closed.
    Therefore $x\vee y\in A$, which is a contradiction.
  \end{proof}
  \label{p-sublattice implies sublattice}
\end{proposition}

The following theorem is the main theorem of this section.
We give a necessary and sufficient condition for a flip to be a mutation.
\begin{theorem}
\label{mutation iff}
  Let $\mu_{(A,B)}$ be a flip of a lattice $L$. The following are equivalent:
  \begin{enumerate}
    \item $\mu_{(A,B)}$ is a mutation.
    \item $\mu_{(A,B)}$ satisfies AC and $\partial$-sublattice conditions.
  \end{enumerate}
\end{theorem}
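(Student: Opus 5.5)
We prove the two implications separately. Throughout we use the description of $\leq_{L'}$ that follows from \cref{After flip}. Within $A$, and within $B$, the order $\leq_{L'}$ agrees with $\leq_L$. No element of $A$ lies $\leq_{L'}$ below an element of $B$. For $b\in B$ and $x\in A$ we have
\[
b\leq_{L'}x \iff \exists\, b_0\in\partial B,\ a_0\in\partial A \ \text{with}\ b\leq_L b_0,\ a_0\prec_L b_0,\ a_0\leq_L x .
\]
We also use the general fact that a finite poset with a least element in which every pair has a join is a lattice.

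For $(1)\Rightarrow(2)$, assume $L'$ is a lattice.
\begin{enumerate}
\item \emph{Where $0_{L'}$ and $1_{L'}$ live.} Since $B\neq\emptyset$ and no element of $A$ is $\leq_{L'}$ an element of $B$, $0_{L'}$ lies in $B$; dually $1_{L'}$ lies in $A$.
\item \emph{AC condition.} By \cref{A contains 0}, $0_L\in A$, so $0_{L'}\leq_{L'}0_L$. Some covering chain realises this, and it must end with a reversed cover $b_0\succ_L a_0\le_L 0_L$. Hence $a_0=0_L$ and $b_0\in B$ is an atom of $L$. The dual argument, applied to $1_L\in B$ and $1_{L'}\in A$, produces a coatom of $L$ lying in $A$.
\item \emph{$A$ is closed under $\vee_L$.} Let $x,y\in A$. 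Every $L'$-upper bound of $x,y$ lies in $A$, and there it is an $L$-upper bound. So $w=x\vee_{L'}y$ lies in $A$ and satisfies $w\geq_L x\vee_L y$. If $x\vee_L y\in B$ this contradicts the flip-pair axiom. The symmetric argument with meets shows $B$ is closed under $\wedge_L$.
\item \emph{Upgrading to the $\partial$-sublattice condition.} It remains to show that for $x,y\in\partial A$ the element $x\vee_L y$ actually has an upper cover in $B$, i.e. lies in $\partial A$; the statement for $\partial B$ is dual. Choose $b_x,b_y\in B$ with $x\prec_L b_x$ and $y\prec_L b_y$, so that $b_x\leq_{L'}x$ and $b_y\leq_{L'}y$.

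My plan is to exploit the lattice property of $L'$ on the pair $b_x\vee_{L'}b_y$ versus $x\vee_{L'}y=x\vee_L y$, where the last equality holds by the previous step. A chain witnessing $b_x\vee_{L'}b_y\leq_{L'}x\vee_L y$ passes through a reversed cover. I would argue by contradiction, using minimality of the join and \cref{cycle in lattice}, that the $A$-end of this reversed cover is exactly $x\vee_L y$.
\end{enumerate}

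For $(2)\Rightarrow(1)$, assume AC and the $\partial$-sublattice condition. By \cref{p-sublattice implies sublattice}, $A$ and $B$ are sublattices.
\begin{enumerate}
\item \emph{A least element of $L'$.} $B$ is a $\wedge$-closed set containing the atom $a\in B$, so its minimum $m_B$ satisfies $m_B\leq_L a$. Since $0_L\notin B$, we get $m_B=a$. The cover $0_L\prec_L a$ is reversed, so $a\leq_{L'}0_L\leq_{L'}A$. Hence $a=0_{L'}$.
\item \emph{Joins, case $x,y\in A$.} The join is $x\vee_L y$, because all $L'$-upper bounds lie in $A$ and are compared by $\leq_L$.
\item \emph{Joins, case $x\in A$, $y\in B$.} The $L'$-upper bounds are the elements of $A$ that are $\geq_L x$ and $\geq_L a_0$ for some $a_0\in\partial A$ with $a_0\prec_L b_0\geq_L y$. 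The candidate join is $x\vee_L a_y$, where $a_y$ is a suitable least such $a_0$.
\item \emph{Joins, case $x,y\in B$.} The candidate is $b=x\vee_L y\in B$. We must show that every $w\in A$ with $x,y\leq_{L'}w$ satisfies $b\leq_{L'}w$. We get witnesses $a_x\prec_L b_x\geq_L x$ and $a_y\prec_L b_y\geq_L y$ with $a_x,a_y\leq_L w$.
\end{enumerate}

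The key device for both join cases will be the following. Given $a_1\prec_L b_1$ and $a_2\prec_L b_2$ with $a_i\in\partial A$ and $b_i\in\partial B$, I aim to show that $a_1\vee a_2\in\partial A$ is covered by an element $\geq_L b_1\vee b_2$; this is exactly what the $\partial$-sublattice condition should provide. The last case of $(2)\Rightarrow(1)$ rests entirely on this "joint cover" property, and it is the main obstacle in both directions. To prove it I would first try induction on the length of a covering chain from $a_1\vee a_2$ up to $b_1\vee b_2$. At each step I would use $\partial B$-closure under $\wedge$ to replace $b_1\vee b_2$ by a smaller element of $\partial B$ still above $a_1\vee a_2$, together with \cref{cycle in lattice} to rule out the bad configurations.

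The other direction mirrors this. Lattice-ness of $L'$ forces the reversed cover from $b_x\vee_{L'}b_y$ to land exactly at $x\vee_L y$, which gives the missing step (4) of $(1)\Rightarrow(2)$.
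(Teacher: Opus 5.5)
There is a genuine gap in $(2)\Rightarrow(1)$. You correctly identify the ``joint cover'' property as the crux of the case $x,y\in B$. Its meet-dual is what would produce the ``suitable least'' $a_0$ in the mixed case. But you leave both unproved, offering only a tentative induction on chain length with \cref{cycle in lattice}.

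The missing idea is that AC is far stronger than you use it. Together with the sublattice condition it forces $A=\{x\le a'\}$ and $B=\{x\ge a\}$ (\cref{AC and sublattice implies AC-correspondence}). You were one line from this. In your step 1 you show $a=\min B$, and $B$ is upward closed, so $B=\{x\ge a\}$. Dually, $A$ is $\vee$-closed, so its maximum is $\ge a'$ and $\ne 1_L$, hence equals $a'$.

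Then every cover between $A$ and $B$ is rigid (\cref{join and fault plane}). If $p\in A$, $q\in B$ and $p\prec_L q$, then $p<p\vee a\le q$ forces $q=p\vee a$, and $p\le q\wedge a'<q$ forces $p=q\wedge a'$.

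With this, the joint cover property is a one-liner: $b_1\vee b_2=(a_1\vee a)\vee(a_2\vee a)=(a_1\vee a_2)\vee a$. This is precisely the upper cover in $B$ of $a_1\vee a_2\in\partial A$.

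In the mixed case, $\partial B$ is $\wedge$-closed and contains $1_L\succ a'$. So $s=\bigwedge\{q\in\partial B : q\ge_L y\}$ lies in $\partial B$. Every witness $q\wedge a'$ dominates $s\wedge a'$, and the join is $x\vee_L(s\wedge a')$. Note also that $1_L\in\partial B$ (equivalently, $a'=1_{L'}$) is what guarantees that common upper bounds exist at all, which your proposal never checks.

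This is the paper's route. It first establishes the AC-correspondence, then shows $\partial A\cong\partial B$ via $\vee a$ and $\wedge a'$, and then runs the same three join cases.

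In $(1)\Rightarrow(2)$, steps 1--3 are sound; your AC argument, with the reversed cover forced to land on $0_L$, is cleaner than the paper's. Step 4, however, is only a plan, and no contradiction or \cref{cycle in lattice} is needed.

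Put $z'=b_x\vee_L b_y$ and $w=x\vee_L y$. Since $z'\in B$ is an $L'$-upper bound of $b_x,b_y$, the element $b_x\vee_{L'}b_y$ is $\le_{L'}z'$. Hence it lies in $B$, where the two orders agree, and so it equals $z'$. Since $w$ is also an $L'$-upper bound of $b_x,b_y$, this gives $z'\le_{L'}w$.

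The fact you do not state is $w\le_L z'$. With it, take the reversed cover $q\succ_L p$ on an $L'$-chain from $z'$ to $w$. It satisfies $p\le_L w\le_L z'\le_L q$, which forces $w=p\prec_L q=z'$. So $w\in\partial A$; this is essentially the paper's argument.
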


Hereafter, we give some lemmas to prove.
The implication $(1) \Rightarrow (2)$ will be proved in \cref{necessary condition for mutation}, and $(2) \Rightarrow (1)$ will be proved in \cref{sufficient condition for mutation}.

\begin{definition}
Let $a\in L$ be an atom, and let $a'\in L$ be a coatom.
We say that $(a,a')$ is an \emph{AC-correspondence} on $L$ if $L=\{ x \geq a | \ x \in L \} \sqcup \{ x \leq a' | \ x \in L \}$.
\end{definition}

\begin{lemma}
Let $(a,a')$ be an AC-correspondence on $L$. Then $(\{ x \leq a' | \ x \in L \},\{ x \geq a | \ x \in L \})$ is a flip pair of $L$.
\begin{proof} By definition of AC-correspondence, $a\nleq a'$.
\end{proof}
\end{lemma}

Hereafter, we take a flip pair $A=\{ x \leq a' | \ x \in L \},\ B=\{ x \geq a | \ x \in L \} $ if $(a,a')$ is an AC-correspondence, unless we mentioned.
Added to this, we denote the flip by this flip pair by $\mu_a$ or $\mu_{a'}$.

\begin{definition}
Let $(a,a')$ be an AC-correspondence on $L$. For an arbitrary $x\in L$, we define $\partial_{\downarrow}$ and $\partial_{\uparrow}$ as follows:
\[\partial_{\downarrow}(x)=(x\vee a)\wedge a',\ \partial_{\uparrow}(x)=(x\wedge a')\vee a\]
\end{definition}

\begin{lemma}
\label{mutation implies AC and sublattice}
Let $L\xmapsto[]{\mu_{(A,B)}}L'$ be a mutation. Then it satisfies AC and sublattice conditions.
\begin{proof} The sublattice condition: Since any $p,q\in A$ there exists $p\vee_{L'} q$ on $L'$, $p\vee_{L'} q$ is in $A$.
By definition of flip, $p\vee_{L'} q$ is upper than $p,q$ on $L$, hence upper than $p\vee_{L} q$ on $L$.
Therefore $p\vee_{L} q$ is in $A$ since $A$ is downward closed.

The AC condition: Let $b'$ be the minimum of $L'$ (since every finite lattice has a minimum).
$b'$ is also the minimum of $B$ on $L$.
We can take covering chain from $0$ to $b'$ on $L$, hence there exists an element of $A$ covered by $b'$.
We show there is a contradiction when $p$ is not $0$.
In this situation $b'$ do not cover $0$, then $0$ do not cover $b'$ on $L$. 
Since $0$ is minimum of $A$, if we take a covering chain from $b'$ to $0$ on $L'$, we can take $q\in B$ which covered by $0$.

However,$0\lneq p\lneq b'\leq q$ on $L$, hence $q$ cannot cover $0$ on $L$.
\end{proof}
\end{lemma}

\begin{lemma}
\label{AC and sublattice implies AC-correspondence}
Let $\mu_{(A,B)}$ be a flip of a lattice $L$ which satisfies AC and sublattice conditions.
Then $A=\{ x \leq a' | \ x \in L \},\ B=\{ x \geq a | \ x \in L \} $.
Here, $a$ and $a'$ are taken to satisfy the AC condition.
\begin{proof} If $A$ contains $p\nleq a'$, then $p\vee a'=1$ (since $a'$ is coatom). Hence $A$ contains $1$, which is a contradiction.
On the other hand, $A$ is downward closed, then any $p\leq a'$ is contained by $A$.
\end{proof}
\end{lemma}

\begin{lemma}
Let $(a,a')$ be an AC-correspondence of a lattice $L$. For arbitrary $x\in L$, $\partial_{\downarrow}(\partial_{\downarrow}(x))=\partial_{\downarrow}(x)$ and $\partial_{\uparrow}(\partial_{\uparrow}(x))=\partial_{\uparrow}(x)$ hold.
\begin{proof} Indeed, we have the following equation for all lattices:
\[(((p\vee q)\wedge r)\vee q)\wedge r=(p\vee q)\wedge r\]
We show it in detail.
Since $(p\vee q)\wedge r \leq p\vee q$ and $q \leq p\vee q$, we obtain $((p\vee q)\wedge r)\vee q\leq p\vee q$.
Since $\wedge r$ is an order homomorphism, $LHS\leq RHS$ holds.
On the other hand, since $((p\vee q)\wedge r)\vee q\geq (p\vee q)\wedge r$ and $r \geq (p\vee q)\wedge r$, we obtain $LHS\geq RHS$.
\end{proof}
\end{lemma}

\begin{lemma}
Let $(a,a')$ be an AC-correspondence of a lattice $L$. Then $\partial A=\partial_{\downarrow}(L)$ and $\partial B=\partial_{\uparrow}(L)$ hold.
\begin{proof} For any $p\in \partial A$, we can take $q\in B$ that covers $p$.
Since $p\lneq p\vee a \leq q$, it follows that $p\vee a=q$.
The same argument shows $p=q\wedge a'$, and thus $\partial_{\downarrow}(p)=p, \partial_{\downarrow}(q)=p$.
On the other hand, $\partial_{\downarrow}(x)=a'\wedge (a\vee x)$ holds for any $x\in L$.
We take an arbitrary covering chain from $a\vee x$ to $\partial_{\downarrow}(x)$.
In this chain, if an element $p\in A$ precedes to $\partial_{\downarrow}(x)$, then $p$ is lower than $a'$.
Hence, $p$ is lower than $a'\wedge (a\vee x)$, which contradicts $\partial_{\downarrow}(x)=a'\wedge (a\vee x)$.
Therefore $\partial_{\downarrow}(x)$ is covered by an element of $B$.
\end{proof}
\label{AC-correspondence and fault plane}
\end{lemma}

\begin{lemma}
  Let $(a,a')$ be an AC-correspondence of a lattice $L$, and let $x\in \partial A$. Then $\partial_{\downarrow}(x)=x$.
  \begin{proof}
    By \cref{AC-correspondence and fault plane}, we can take an element $y$ such that $x=f(y)$. Thus, $f(x)=f(f(y))=f(y)=x$.
  \end{proof}
  \label{partial on fault plane}
\end{lemma}

\begin{lemma}
Let $(a,a')$ be an AC-correspondence of a lattice $L$. Then $\vee a$ is a bijective order homomorphism from $\partial A$ to $\partial B$, and $\wedge a'$ is a bijective order homomorphism from $\partial B$ to $\partial A$. Moreover, these two morphisms are inverse to each other, then $\partial A,\ \partial B$ are order isomorphic.
\begin{proof} $\partial_{\downarrow}(L)\vee a \ \subset \  \partial_{\uparrow}(L),\ \partial_{\uparrow}(L)\wedge a' \ \subset \  \partial_{\downarrow}(L),\ \partial_{\downarrow}(\partial_{\downarrow}(L))=\partial_{\downarrow}(L)$ leads to bijective.
Also, from elementally properties of lattices, these are order homomorphisms. \cref{partial on fault plane} shows that these are inverse mappings.
\end{proof}
\label{order isomorphic fault planes}
\end{lemma}

\begin{lemma}
  Let $(a,a')$ be an AC-correspondence of a lattice $L$, and let $x \in \partial A,$ and $y \in \partial B$. Then the following are equivalent:
  \begin{enumerate}
    \item $x\leq y$
    \item $x\leq \partial_{\downarrow}(y)$
    \item $\partial_{\uparrow}(x)\leq y$
  \end{enumerate}
  \begin{proof}
    (1) implies (2) and (3), since $\partial_{\uparrow},\partial_{\downarrow}$ are order homomorphisms.
    Conversely, (2) or (3) implies (1), since $x\leq \partial_{\uparrow}(x)$ and $\partial_{\downarrow}(y)\leq y$.
  \end{proof}
\end{lemma}

\begin{lemma}
\label{join and fault plane}
  Let $(a,a')$ be an AC-correspondence of a lattice $L$, and let $x\in \partial A$. Then, $x\prec x\vee a$. Moreover, $x\vee a$ is the only element in $\partial B$ that covers $x$. The dual statement also holds.
  \begin{proof}
    Let us choose an arbitrary covering chain $x_0,\cdots, x_n$ from $x$ to $x\vee a$.
    There exists $i$ satisfying $x_i\in A,x_{i+1}\in B$.
    In this situation, $x_{i+1}\geq x,a$. Then $x\vee a \leq x_{i+1}\geq x\vee a$ and hence $x_{i+1}=x\vee a$.
    On the other hand, $(x\vee a)\wedge a'=x$ holds. Since $x_i\leq a',x\vee a$, it follows that $x\leq x_i\leq x$. Hence $x_i=x$.
    Therefore $x=x_i\prec x_{i+1}=x\vee a$.
    The dual statement is proved similarly.
  \end{proof}
\end{lemma}

\begin{lemma}
\label{AC-correspondence and semilattice}
Let $(a,a')$ be an AC-correspondence of a lattice $L$. Then $\partial A$ is closed under $\wedge$, and $\partial B$ is closed under $\vee$.
\begin{proof}
By \cref{AC-correspondence and fault plane}, we have $\partial A=\partial_{\downarrow}(L)$.
We show that $\partial A$ is closed under $\wedge$ by the following computation.
\begin{align*}
((x\vee a)\wedge a')\wedge ((y\vee a)\wedge a')=& ((x\vee a)\wedge (y\vee a))\wedge a' \\
=& (((x\vee a)\wedge (y\vee a))\vee a)\wedge a'
\end{align*}
Similar arguments show that $\partial B$ is closed under $\vee$.
\end{proof}
\end{lemma}

\begin{lemma}
Let $(a,a')$ be an AC-correspondence of a lattice $L$, and let $x\in A$. Then $\partial_{\downarrow}(x)$ is the least element in $\partial A$ that is greater than or equal to $x$. The dual statement also holds.
\begin{proof} Since $a'\geq x$ and since $\partial A$ is closed under $\wedge$, we can take $y$ to be the least element in $\partial A$ that is greater than or equal to $x$.
Since $x\ \leq \ a\vee x,\ a'$, we obtain $x\leq \partial_{\downarrow}(x)$. Then $y\leq \partial_{\downarrow}(x)$.
On the other hand, since $\partial_{\downarrow}$ is order homomorphism and $x\leq y$, it follows that $\partial_{\downarrow}(x)\leq \partial_{\downarrow}(y)=y$.
The dual statement is proved similarly.
\end{proof}
\label{fault plane and supremum}
\end{lemma}

\begin{lemma}
  Let $L$ be a lattice, and let $x\in L$. Then, \[x\in \partial A \Leftrightarrow x\prec x\vee a.\] The dual statement also holds.
  \begin{proof}
    $(\Rightarrow)$ follows from \cref{join and fault plane}. $(\Leftarrow)$ : Since $x\ngeq a$, $x\in A$. By \cref{fault plane and supremum}, $x\leq \partial_{\downarrow}(x) \lneq x\vee a$.
    The dual statement is proved similarly.
  \end{proof}
\end{lemma}

The following proposition shows the implication $(1) \Rightarrow (2)$ in \cref{mutation iff}.

\begin{proposition}
\label{necessary condition for mutation}
Let $L\xmapsto[]{\mu_{(A,B)}}L'$ be a mutation. Then it satisfies AC and $\partial$-sublattice conditions.
\begin{proof} By \cref{mutation implies AC and sublattice,AC and sublattice implies AC-correspondence}, we obtain AC, sublattice conditions. Moreover, we can take an AC-correspondence.
For arbitrary $u,v\in \partial A$, $w=u\vee v$ is in $A$.
Take $u',v'\in \partial B$ satisfying $u\prec u'$ and $v\prec v'$.
By \cref{join and fault plane}, the elements $u'$ and $v'$ are uniquely determined.
Denote $z'=u'\vee_{L} v'$. Then $w\leq_{L} z'$.
Since $z'\geq_{L'} u',v'$, it follows that $u'\vee_{L'} v'\leq_{L'} z'$.
Hence, $u'\vee_{L'} v' \in B$.
Since $w\geq_{L'} u,v$, it follows that $w\geq_{L'} u',v'$.
Then, $w\wedge_{L'} z'$ is also greater than or equal to $u',v'$ and is in $B$.
An element greater than or equal to $u',v'$ and in $B$ is greater than or equal to $z'$. Thus $w\wedge_{L'} z'\geq_{L'} z'$, and hence $w\geq_{L'} z'$.
Since $w\leq_{L} z'$, it follows that $z'$ covers $w$ on $L$.
Therefore, $w\in \partial A$, hence the $\partial$-sublattice condition holds.
\end{proof}
\end{proposition}

The following proposition shows the implication $(2) \Rightarrow (1)$ in \cref{mutation iff}.

\begin{proposition}
\label{sufficient condition for mutation}
Let $L\xmapsto[]{\mu_{(A,B)}}L'$ be a filp of a lattice $L$ satisfying AC and $\partial$-sublattice conditions. Then $L'$ is a lattice.
\begin{proof} We show that it is possible to define $x\vee_{L'} y$.
By \cref{p-sublattice implies sublattice,AC and sublattice implies AC-correspondence}, we can take an AC-correspondence $(a,a')$.
Moreover, the $\partial$-sublattice condition and \cref{AC-correspondence and semilattice} implies that $\partial A,\partial B$ are lattices.
Then, $\partial A$ and $\partial B$ are isomorphic lattices.

Case1: $x,y \in A$ 

In this case, an element $w$ satisfying $x,y\leq_{L'} w$ is not in $B$.
Then, $x,y\leq_{L} w$, hence $x\vee_{L} y \leq_{L} w$. On the other hand, $x,y\leq_{L'}, x\vee_{L} y$.
Thus, $x\vee_{L'} y=x\vee_{L} y$.

Case2: $x \in A, \ y \in B$ 

Since an element $w$ satisfying $x\leq_{L'} w$ is in $A$, an element $w'$ satisfying $x,y\leq_{L'} w'$ is also in $A$.
For any $u\in A$ with $y\leq_{L'} u$, consider a covering chain from $y$ to $u$ on $L'$.
In this chain, there exists $i$ such that $z_i\in B$ and $z_{i+1}\in A$. Thus $z_i \in \partial B$.
Hence there exists an element of $\partial B$ which is upper than $y$ and lower than $u$.

By the $\partial$-sublattice condition, $\partial B$ is closed under $\wedge_{L}$.
By the same argument as in Case1, $\partial B$ is also closed under $\wedge_{L'}$.
We can take $s$ to be the least among the elements of $\partial B$ that are greater than or equal to $y$.
Take an arbitrary covering chain from $s$ to $u$ on $L'$.
In this chain, there exists $i$ such that $z_i\in B$ and $z_{i+1}\in A$.
Let $p=z_{i+1}$ and let $q=z_{i}$. Then $p\in \partial A$.
Since $\partial A$ and $\partial B$ are order isomorphic, $p\geq_{L} \partial_{\downarrow}(s)$.
For $r\in \partial A$, $r\geq_{L} s$ is equivalent to $r\geq_{L} \partial_{\downarrow}(s)$.
Hence $x\vee_{L'} y=x\vee_{L} \partial_{\downarrow}(s)$, and we are reduced to Case1.

Case3: $x,y \in B$ 

We show that in this case $x\vee_{L'} y=x\vee_L y$.
Take an arbitrary $u \geq_{L'} x,y$. If $u\in B$, then $x\vee_{L} y \leq_{L'} u$.
Hence we may assume $u\in A$.
By the same argument as in Case2, we can take $s'\ (resp.~t')$ to be the least among the elements of $\partial B$ that are greater than or equal to $x\ (resp.~y)$.
Then, it follows from \cref{order isomorphic fault planes} that $s=\partial_{\downarrow}(s')\ (resp.~t=\partial_{\downarrow}(t'))$ is the least among the elements of $\partial A$ that are greater than or equal to $x\ (resp.~y)$ on $L'$.
By the $\partial$-sublattice condition, $s\vee t\in \partial A$.
Since $u\geq_{L'} s\vee t$, $u\geq_{L'} \partial_{\uparrow}(s\vee t)$.

By \cref{order isomorphic fault planes}, $\partial A$ and $\partial B$ are order isomorphic. Thus, $\partial_{\uparrow}(s\vee t) \geq_{L'} \partial_{\uparrow}(s),\partial_{\uparrow}(t)$.
Since $\partial_{\uparrow}(s\vee t),\partial_{\uparrow}(s),\partial_{\uparrow}(t)$ are all in $B$, $\partial_{\uparrow}(s\vee t) \geq_{L} \partial_{\uparrow}(s)\vee_{L} \partial_{\uparrow}(t)$.
Then $\partial_{\uparrow}(s\vee t) \geq_{L} x\vee_{L} y$ holds, and hence $u \geq_{L'} x\vee_{L} y$.
Therefore, $x\vee_{L'} y=x\vee_L y$.
\end{proof}
\end{proposition}

The following two propositions are important properties of mutations which immediately follow from the above lemmas and propositions.

\begin{proposition}
Let $L\xmapsto[]{\mu_{(A,B)}}L'$ be a mutation. Then $A,B,\partial A,\partial B$ are sublattices of $L$.
\begin{proof} It follows from \cref{A contains 0,mutation implies AC and sublattice,AC and sublattice implies AC-correspondence,AC-correspondence and semilattice,necessary condition for mutation}.
\end{proof}
\label{mutation and sublattice}
\end{proposition}

\begin{proposition}
\label{fault plane and lattice isomorphism}
Let $\mu_{(A,B)}$ be a mutation. Then $\partial A,\partial B$ are lattice isomorphic as sublattices of $L$.
\begin{proof} By \cref{order isomorphic fault planes,mutation and sublattice}, these two are sublattices of $L$ and are order isomorphic.
\end{proof}
\end{proposition}

In the rest of this section, we discuss the behavior of mutation under taking quotients and ideals which will be used in later sections.

\begin{lemma}
\label{prerequisite for quotient flip}
  Let $(a,a')$ be an AC-correspondence of a lattice $L$, and let $f:L\rightarrow M$ be a surjective lattice homomorphism. Then the following are equivalent:
  \begin{enumerate}
    \item $f(A) \cap f(B) = \emptyset$
    \item $f(a) \neq f(0)$
    \item $f(a') \neq f(1)$
    \item $f(a) \nleq f(a')$
  \end{enumerate}
  \begin{proof}
    It is clear that (1) $\Rightarrow$ (2) and (3).
    (2) $\Rightarrow$ (3) We prove the contrapositive. Applying $\wedge f(a)$ to both sides of the assumption $f(a')=f(1)$, we obtain $f(0)=f(a)$. A similar argument applies to (3) $\Rightarrow$ (2).
    (2) and (3) $\Rightarrow$ (4) We prove the contrapositive. Applying $\wedge f(a)$ to both sides of the assumption $f(a)\leq f(a')$, we obtain $f(a)\leq f(0)$. Since $f$ is order homomorphism, $f(a)=f(0)$.
    (4) $\Rightarrow$ (1) $f(a')$ is the supremum of $f(A)$, and $f(a)$ is the infimum of $f(B)$. Our assumption says that there exists no element $x$ such that $f(a) \leq x \leq f(a')$. Therefore $f(A) \cap f(B) = \emptyset$.
  \end{proof}
\end{lemma}

\begin{proposition}
\label{quotient flip}
Let $L\xmapsto[]{\mu_{(A,B)}}L'$ be a mutation, and let $f:L\rightarrow M$ be a surjective lattice homomorphism such that $f(A) \cap f(B) = \emptyset$. Then the following hold:
\begin{enumerate}
  \item $(f(A),f(B))$ is a flip pair, and $M\xmapsto[]{\mu_{(f(A),f(B))}}M'$ is a mutation.
  \item $f(\partial A)=\partial (f(A))$ holds.
\end{enumerate}
\begin{proof} 
Since $f(A) \cap f(B) = \emptyset$, we obtain $f(a')\neq f(1)$ by \cref{prerequisite for quotient flip}.
Since $a'\prec 1$, we get $f(a')\prec f(1)$. Thus, the AC condition is satisfied.
Since $f$ is a lattice homomorphism, the image of sublattices $A,B$ are sublattices. Then the sublattice condition is satisfied.
To show the $\partial$-sublattice condition, it is sufficient to show $f(\partial A)=\partial (f(A))$.

Let us prove $f(\partial A)\subset \partial (f(A))$. 
For $x \in \partial A$, there exists $y$ such that $y \in \partial B$ and $x\prec y$.
Since $f(A) \cap f(B) = \emptyset$, $f(x)\lneq f(y)$.
For any $z$ such that $f(x)\leq f(z)\leq f(y)$, let $w=(x\vee z)\wedge y$.
Then $f(w)=f(((x\vee z)\wedge y))=(f(x)\vee f(z))\wedge f(y)=f(z)$ holds.
On the other hand, since $x\leq x\vee z,\ y$, it follows that $x\leq w\leq y$. Since $x\prec y$, $w$ is $x$ or $y$.
Therefore $f(z)=f(w)$ is $f(x)$ or $f(y)$, which shows that $f(x)\prec f(y)$.

We next prove $f(\partial A)\supset \partial (f(A))$.
For $x' \in \partial (f(A))$, there exists $y'$ such that $y' \in \partial (f(B))$ and $x'\prec y'$.
Since $f$ is surjective, there exist $x$ and $y$ such that $f(x)=x',f(y)=y'$.
Let $p=x\wedge y,\ q=x\vee y$.
Then $p\leq q,\ f(p)=f(x)\wedge f(y)=x',f(q)=f(x)\vee f(y)=y'$.
Thus, the image of a covering chain from $p$ to $q$ under $f$ is the sequence $x',x',\cdots,x',y',\cdots,y',y'$.
Then there exist $m,n$ such that $m\prec n$ and $f(m)=x',f(n)=y'$.
Since $x'\notin f(B)$ and $y'\notin f(A)$, it follows that $m\in A$ and $n\in B$.
Therefore $m\in \partial A$ and $n\in \partial B$.
\end{proof}
\end{proposition}

\begin{remark}
In the above proposition, $M'$ is not, in general, a quotient lattice of $L'$.
In particular, in general, $f: (L,\leq_{L'})\to (M,\leq_{M'})$ is not lattice homomorphism.
We show an example in \cref{sec:Flips of Cambrian lattices}, where we set $L$ be the weak order of $S_4$, $B=\{x\in L\ |\ (2,3)\leq x\}$, and $M$ be the $A_3$ Tamari lattice \cite{MR2258260}. Then, $L'\cong L$ and $M'$ is the lattice shown in \cref{fig:A2affineTamari}. By \cref{general type Ordovician lattice and the infinite case}, $M'$ is not a quotient lattice of $L'$.
\end{remark}

\begin{proposition}
  Let $L\xmapsto[]{\mu_{(A,B)}}L'$ be a mutation, and let $S=[0_L,x]_L$. There exists $S'$ such that $S'$ is an ideal of $L'$ and is isomorphic to $S$ as a set if and only if $x\in \partial B$ holds. Moreover, if $x\in \partial B$, then $S'=[0_L',x']_{L'}$ where $x'=\partial_{\downarrow}(x)$. 
  \begin{proof}
    We first show the necessity by contradiction. Since $0_{L'}\in S$ on $L$, $x\in B$.
    By assumption, $x\notin \partial B$, hence $x$ is maximal of $S'$.
    Since $x\ngeq 0_L \in A$ on $L$, $S'$ has no maximum. Hence $S'$ cannot be an ideal, which is a contradiction.
    We next show the sufficiency.
    Since the order dual of \cref{join and fault plane}, $x'\in A$ satisfies $x'\succ_{L'} x$.
    The restriction of $\mu_{(A,B)}$ to $[0_L,x]_L$ is the flip $\mu_{(A\cap [0_L,x]_L\ ,\ B\cap [0_L,x]_L)}$ and its fault plane is $\partial A \cap [0_L,x]_L$ and $\partial B \cap [0_L,x]_L $.
    We show that this flip is a mutation by using \cref{mutation iff}.
    It is easily seen that $0_L\in \partial A \cap [0_L,x]_L \subset A\cap [0_L,x]_L$ and $0_L'\in \partial B \cap [0_L,x]_L \subset B\cap [0_L,x]_L$.
    Since the intersection of two sublattices is either empty or a sublattice, sublattice and $\partial$-sublattice conditions are satisfied.
    The AC condition is shown by taking $a=0_L'$ and $a'=x'$.
  \end{proof}
  \label{mutation of ideal}
\end{proposition}

\section{Invariants of flips}
\label{sec:Invariants of flips}
Flips modify the structure of posets only slightly.
In this section, we introduce an invariant $D$ of posets that remains unchanged under flips.
Throughout this section, we assume that $L$ is a connected poset.
Here, to say that $L$ is connected means that the underlying undirected graph of $\Hasse(L)$ is connected.

\begin{definition}
  Let $Q$ be a quiver. We define a \emph{repetition quiver} $Rep(Q)$ associated with $Q$ as follows:
  \begin{enumerate}
    \item The vertices are \( \mathbf{Z} \times Q \).
    \item The edges are \(\{(m,x) \to (n,y)\ |\ (n=m , \exists x \to y \in Q) \ or\  (n=m+1 , \exists y \to x \in Q)\}\).
  \end{enumerate}
  \begin{figure}
  \centering
  \begin{tikzpicture}
\node[draw,shape=circle,inner sep=1pt] (A00) at (0,0) {};
\node[draw,shape=circle,inner sep=1pt] (A01) at (1,1) {};
\node[draw,shape=circle,inner sep=1pt] (A02) at (0,2) {};
\node[draw,shape=circle,inner sep=1pt] (A03) at (1,3) {};
\node[draw,shape=circle,inner sep=1pt] (A10) at (2,0) {};
\node[draw,shape=circle,inner sep=1pt] (A11) at (3,1) {};
\node[draw,shape=circle,inner sep=1pt] (A12) at (2,2) {};
\node[draw,shape=circle,inner sep=1pt] (A13) at (3,3) {};
\node[draw,shape=circle,inner sep=1pt] (A20) at (4,0) {};
\node[draw,shape=circle,inner sep=1pt] (A21) at (5,1) {};
\node[draw,shape=circle,inner sep=1pt] (A22) at (4,2) {};
\node[draw,shape=circle,inner sep=1pt] (A23) at (5,3) {};
\node[draw,shape=circle,inner sep=1pt] (A30) at (6,0) {};
\node[draw,shape=circle,inner sep=1pt] (A31) at (7,1) {};
\node[draw,shape=circle,inner sep=1pt] (A32) at (6,2) {};
\node[draw,shape=circle,inner sep=1pt] (A33) at (7,3) {};

\node at (-0.5,1.5) {$\cdots$};
\node at (7.5,1.5) {$\cdots$};

\draw[->] (A00)--(A01);
\draw[->] (A02)--(A01);
\draw[->] (A02)--(A03);

\draw[->,densely dotted] (A01)--(A10);
\draw[->,densely dotted] (A01)--(A12);
\draw[->,densely dotted] (A03)--(A12);

\draw[->,very thick] (A10)--(A11);
\draw[->,very thick] (A12)--(A11);
\draw[->,very thick] (A12)--(A13);

\draw[->,densely dotted] (A11)--(A20);
\draw[->,densely dotted] (A11)--(A22);
\draw[->,densely dotted] (A13)--(A22);

\draw[->] (A20)--(A21);
\draw[->] (A22)--(A21);
\draw[->] (A22)--(A23);

\draw[->,densely dotted] (A21)--(A30);
\draw[->,densely dotted] (A21)--(A32);
\draw[->,densely dotted] (A23)--(A32);

\draw[->] (A30)--(A31);
\draw[->] (A32)--(A31);
\draw[->] (A32)--(A33);

\node[draw,shape=circle,inner sep=1pt] (B00) at (0,3.5) {};
\node[draw,shape=circle,inner sep=1pt] (B01) at (1,4.5) {};
\node[draw,shape=circle,inner sep=1pt] (B02) at (0,5.5) {};
\node[draw,shape=circle,inner sep=1pt] (B03) at (1,6.5) {};
\node[draw,shape=circle,inner sep=1pt] (B10) at (2,3.5) {};
\node[draw,shape=circle,inner sep=1pt] (B11) at (3,4.5) {};
\node[draw,shape=circle,inner sep=1pt] (B12) at (2,5.5) {};
\node[draw,shape=circle,inner sep=1pt] (B13) at (3,6.5) {};
\node[draw,shape=circle,inner sep=1pt] (B20) at (4,3.5) {};
\node[draw,shape=circle,inner sep=1pt] (B21) at (5,4.5) {};
\node[draw,shape=circle,inner sep=1pt] (B22) at (4,5.5) {};
\node[draw,shape=circle,inner sep=1pt] (B23) at (5,6.5) {};
\node[draw,shape=circle,inner sep=1pt] (B30) at (6,3.5) {};
\node[draw,shape=circle,inner sep=1pt] (B31) at (7,4.5) {};
\node[draw,shape=circle,inner sep=1pt] (B32) at (6,5.5) {};
\node[draw,shape=circle,inner sep=1pt] (B33) at (7,6.5) {};

\node at (-0.5,5) {$\cdots$};
\node at (7.5,5) {$\cdots$};

\draw[->,densely dotted] (B00)--(B01);
\draw[->] (B02)--(B01);
\draw[->,densely dotted] (B02)--(B03);

\draw[->] (B01)--(B10);
\draw[->,densely dotted] (B01)--(B12);
\draw[->,very thick] (B03)--(B12);

\draw[->,densely dotted] (B10)--(B11);
\draw[->,very thick] (B12)--(B11);
\draw[->,densely dotted] (B12)--(B13);

\draw[->,very thick] (B11)--(B20);
\draw[->,densely dotted] (B11)--(B22);
\draw[->] (B13)--(B22);

\draw[->,densely dotted] (B20)--(B21);
\draw[->] (B22)--(B21);
\draw[->,densely dotted] (B22)--(B23);

\draw[->] (B21)--(B30);
\draw[->,densely dotted] (B21)--(B32);
\draw[->] (B23)--(B32);

\draw[->,densely dotted] (B30)--(B31);
\draw[->] (B32)--(B31);
\draw[->,densely dotted] (B32)--(B33);

  \end{tikzpicture}
  \caption{Quivers and their repetition quivers. In this example, different quivers have the same repetition quiver.}
  \label{fig:repetition quiver}
\end{figure}
\end{definition}

\begin{lemma}
  Let $i$ be a sink or a source of a quiver $Q$, and let $Q'=\mu_i(Q)$.
  Then $Rep(Q)$ and $Rep(Q')$ are isomorphic; in particular, the following isomorphism can be defined.
\begin{equation*}
  (m,j) \mapsto
  \begin{cases}
    (m,j) & \text{if $j\neq i$,} \\
    (m-1,i) & \text{if $j=i$ and $i$ is a sink,} \\
    (m+1,i) & \text{if $j=i$ and $i$ is a source.}
  \end{cases}
\end{equation*}
\end{lemma}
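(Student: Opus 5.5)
The formula defines a bijection $\varphi$ of the common vertex set $\mathbf{Z}\times Q$ of $Rep(Q)$ and $Rep(Q')$ onto itself: it is the identity on $\mathbf{Z}\times(Q\setminus\{i\})$ and a translation of the $\mathbf{Z}$-coordinate on $\mathbf{Z}\times\{i\}$. So the plan is to show that $\varphi$ carries arrows to arrows and non-arrows to non-arrows. Since $\mu_i$ is an involution ($i$ is a source of $Q'$ exactly when it is a sink of $Q$), it is enough to compare arrow sets directly and to treat the two cases in parallel.

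\emph{Step 1: arrows not involving $i$.} Arrows of $Rep(Q)$ between vertices $(m,x)$ and $(n,y)$ with $x,y\neq i$ depend only on the arrows $x\to y$ of $Q$ avoiding $i$. These arrows are the same in $Q$ and $Q'$, and $\varphi$ fixes such vertices, so these arrows correspond identically.

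\emph{Step 2: arrows involving $i$, sink case.} Let $i$ be a sink of $Q$ and $j$ a neighbour, so $Q$ has $j\to i$ and $Q'$ has $i\to j$. By the definition of the repetition quiver:
\begin{itemize}
\item $Rep(Q)$ has exactly the arrows $(m,j)\to(m,i)$ and $(m,i)\to(m+1,j)$ at $i$;
\item $Rep(Q')$ has exactly the arrows $(m,i)\to(m,j)$ and $(m,j)\to(m+1,i)$ at $i$.
\end{itemize}
Apply the shift $(m,i)\mapsto(m-1,i)$ to the arrows of $Rep(Q')$. The arrow $(m,i)\to(m,j)$ becomes $(m-1,i)\to(m,j)$, which is an arrow of $Rep(Q)$. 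The arrow $(m,j)\to(m+1,i)$ becomes $(m,j)\to(m,i)$, which is also an arrow of $Rep(Q)$. Conversely, every arrow of $Rep(Q)$ at $i$ arises in exactly one of these two ways.

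\emph{Step 3: source case.} This is identical: $Rep(Q)$ has the arrows $(m,i)\to(m,j)$ and $(m,j)\to(m+1,i)$, while $Rep(Q')$ has $(m,j)\to(m,i)$ and $(m,i)\to(m+1,j)$. The shift $(m,i)\mapsto(m+1,i)$ matches the two families exactly as in Step 2.

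\emph{Main obstacle: direction of the identification.} The only delicate point is which of $Rep(Q)$, $Rep(Q')$ is the source of the displayed map. With the arrow convention of the definition, Steps 2 and 3 show that the displayed formula is an arrow-preserving bijection from $Rep(Q')$ to $Rep(Q)$. Its inverse, with the opposite shift, is then the isomorphism in the other direction. Either way the displayed bijection of $\mathbf{Z}\times Q$ realises the isomorphism $Rep(Q)\cong Rep(Q')$. I will state explicitly which direction the formula is read in, so that the sign of the shift ($-1$ for a sink, $+1$ for a source) is exactly as written in the statement.
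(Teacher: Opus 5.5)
Your proof is correct. The paper states this lemma without proof, and your arrow-by-arrow comparison is exactly the verification it needs: arrows away from the fibre $\mathbf{Z}\times\{i\}$ are untouched, and the two families of arrows meeting that fibre are matched bijectively by the shift.

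Your remark on direction is also right and worth keeping. With ``sink'' and ``source'' taken in $Q$, the displayed map is an isomorphism $Rep(Q')\to Rep(Q)$. For example, if $Q$ is $j\to i$, the out-neighbour of $(0,j)$ is $(0,i)$ in $Rep(Q)$ but $(1,i)$ in $Rep(Q')$. So, read as a map $Rep(Q)\to Rep(Q')$, the shifts would have to be $+1$ for a sink and $-1$ for a source.
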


In what follows, we consider in particular the case where $Q=\Hasse(L)$ for some poset $L$.
Then, $Rep(Q)$ is also a poset.

Let $x,y\in L$. We define $d(x,y)$ as follows:
\[d(x,y)=\min\{i\in \mathbf{Z}_{\geq 0} \ | \ (0,x) \leq (i,y)\}\]

\begin{remark}
  $d(x,y)$ is also the minimum number of descending covers in covering path from $x$ to $y$. 
\end{remark}

\begin{lemma}
  The following hold:
  \begin{enumerate}
    \item $d(x,y)=0 \Leftrightarrow x\leq y$
    \item $d(x,y)=d(y,x)=0 \Leftrightarrow x=y$
    \item $d(x,z)\leq d(x,y)+d(y,z)$
    \item $d(x,y)=0,d(y,x)=1 \Leftrightarrow x\prec y$
  \end{enumerate}
  \begin{proof}
    (1) and (2) are clear.
    (3) follows by concatenating a covering path from $x$ to $y$ with a covering path from $y$ to $z$.
    (4) $\Leftarrow$ is clear. We now show $\Rightarrow$.
    By assumption, $x\lneq y$.
    There exists a covering path $y=x_0,\cdots,x_n=x$ that has exactly one descending cover $x_i\succ x_{i+1}$.
    Then $x_{i+1}\leq x\lneq y\leq x_i$ holds; hence $x_{i+1}=x,x_i=y$ follows from $x_i\succ x_{i+1}$.
  \end{proof}
  \label{basic properties of d}
\end{lemma}

By (1) of \cref{basic properties of d}, $d(x,y)$ recovers the poset structure on $L$.

Let $x,y\in L$. We say that $x$ and $y$ have a \emph{cover relationship} when $x\prec y$ or $x\succ y$ holds.
By (4) of \cref{basic properties of d}, $x$ and $y$ have a cover relationship if and only if $d(x,y)+d(y,x)=1$.

\begin{definition}
  Let $L\xmapsto[]{\mu_{(A,B)}}L'$ be a flip. We call $d_{L'}$ as the \emph{flip} of $d_{L}$ by a flip pair $(A,B)$.
\end{definition}

\begin{proposition}
\label{potential epsilon}
  The following hold:
  \begin{enumerate}
    \item $d_{L'}(x,y)-d_{L}(x,y)=(\epsilon(y)-\epsilon(x))/2$, where $\epsilon$ is defined as follows:
    \[\epsilon(z)=
    \begin{cases}
      -1 & (z \in A) \\
      +1 & (z \in B)
    \end{cases}\]
    \item $d_{L'}(x,y)+d_{L'}(y,z)-d_{L'}(x,z)=d_{L}(x,y)+d_{L}(y,z)-d_{L}(x,z)$ \\
    In particular, $d_{L}(x,y)+d_{L}(y,z)-d_{L}(x,z)$ is unchanged under flips.
  \end{enumerate}
  \begin{proof}
    (1) For any covering path $x_0,\cdots,x_n$, the following equation holds.
    \begin{align*}
      &\Big( (\text{number of descending covers in } L')\\
      &\quad - (\text{number of descending covers in } L)\Big) \\
      &=\#\{i\ |\ x_i\in A,x_{i+1}\in B\}-\#\{i\ |\ x_i\in B,x_{i+1}\in A\} \\
      &=(\epsilon(y)-\epsilon(x))/2
    \end{align*}
    
    (2) straightforwardly follows from (1).
  \end{proof}
\end{proposition}

\begin{proposition}
\label{AC-correspondence and the map u}
  Let $(a,a')$ be an AC-correspondence on $L$. Then, for any $x\in L$, $d(a,a')=1=d(a,x)+d(x,a')$.
  \begin{proof}
    If $x\in A$, $d(a,x)+d(x,a')=d(a,x)$. Since $a\nleq x$ and $d(a,x)\leq d(a,0)+d(0,x)=1$, it follows that $d(a,x)=1$.
    Substituting $a'$ for $x$, we obtain $d(a,a')=1$.
    A similar argument applies to $x\in B$.
  \end{proof}
\end{proposition}

\begin{definition}
\label{The invariant D}
  Let $x,y,z\in L$. We define $D_L(x,y,z)$ by 
  \[d_{L}(x,y)+d_{L}(y,z)-d_{L}(x,z).\]
\end{definition}

\begin{lemma}
\label{D recovers d}
  Let $L$ be a connected poset. The following hold:
  \begin{enumerate}
    \item $D_L(x,y,z)\geq 0$
    \item For any flip $L'=\mu(L)$, $D_L(x,y,z)=D_{L'}(x,y,z)$ holds.
    \item If $L$ has the least element $0$, $D_L(0,y,z)=d(y,z)$ holds.
  \end{enumerate}
  \begin{proof}
    (1) follows from \cref{basic properties of d}.
    (2) follows from \cref{potential epsilon}.
    (3) follows from the definition.
  \end{proof}
\end{lemma}

The following theorem states that the poset structure obtained from $L$ by a finite sequence of flips is uniquely determined once the least element is specified.

\begin{theorem}
\label{flip and minimum}
  Let $L$ be a connected poset, and let $x\in L$.
  Then there uniquely exists a poset $L'$ obtained from $L$ by a finite sequence of flips whose least element is $x$.
  \begin{proof}
    We first say that there exists at most one $L'$ satisfying the assumptions in the theorem.
    By \cref{D recovers d}(3), we obtain $d_{L'}(y,z)=D_{L'}(x,y,z)=D_{L}(x,y,z)$, which recovers $L'$.
    We next prove the existence of $L'$. Let $S_L=\{w\in L| \ x\nleq w \}$.
    If $S_L=\emptyset$, this case is clear.
    Then we may assume that $S_L \neq \emptyset$.
    We can take $w'\in S_L$ and a covering path $x_0,\cdots,x_n$ from $x$ to $w'$.
    Then, by \cref{basic properties of d}, $-1\leq d(x,x_i+1)-d(x,x_i)\leq 1$. Hence there exist $u\prec v$ such that $d(x,u)=1,d(x,v)=0$ hold.
    We can take a flip pair $(A,B)$, where $B=\{w\in L|\ w\geq x\}$ and $A=L\setminus B$.
    Let $L_1$ be a flip of $L$ by $(A,B)$.
    Then $S_{L_1}\subset S_L$, $u\in S_L$, and $u\notin S_{L_1}\coloneqq \{w\in L_1 | \ x\nleq w \}$ hold.
    Thus, $\#\{S_{L_1}\} \lneq \#\{S_{L}\}$.
    If $\#\{S_{L_1}\}=0$, then the statement holds.
    Otherwise, by the same argument, we obtain $S_{L_2}$, and hence $\#\{S_{L_2}\} \lneq \#\{S_{L_1}\}$.
    Therefore, by repeating this operation, we obtain $S_{L_m}=\emptyset$ for some $m$, and hence the statement holds.
  \end{proof}
\end{theorem}

\begin{corollary}
\label{D and sequence of flips}
  Let $L_1=(L,\leq_{L_1})$ and $L_2=(L,\leq_{L_2})$ be connected posets with the underlying set $L$. Then the following are equivalent:
  \begin{enumerate}
    \item $L_2$ is obtained from $L_1$ by applying a finite sequence of flips.
    \item $D_{L_1}(x,y,z)=D_{L_2}(x,y,z)$ for all $x,y,z\in L$.
  \end{enumerate}
  \begin{proof}
    The implication (1) $\Rightarrow$ (2) is clear by (2) of \cref{D recovers d}.
    We will prove (2) $\Rightarrow$ (1). Fix $x\in L$. From previous theorem, there uniquely exists $L'_1=(L,\leq_{L'_1})$ such that $L'_1$ is obtained from $L_1$ by applying a finite sequence of flips and such that its least element is $x$. 
    By the same argument, there uniquely exists $L'_2=(L,\leq_{L'_2})$ such that $L'_2$ is obtained from $L_2$ by applying a finite sequence of flips and such that its least element is $x$.
    By \cref{D recovers d}(3), $D_{L'_1}$ and $D_{L'_2}$ determine $L'_1$ and $L'_2$ respectively.
    Then, since $D_{L'_1}=D_{L'_2}$, we obtain $L'_1=L'_2$.
    Therefore, by concatenating a finite sequence of flips from $L_1$ to $L'_1$ and that from $L'_2=L'_1$ to $L_2$, we obtain a finite sequence of flips from $L_1$ to $L_2$. 
  \end{proof}
\end{corollary}

\begin{definition}
  Let $L$ be a poset with the least element $0_L$, and let $a\in L$ be an atom. We define a \emph{flip on (an) atom} $\mu_{a}$ to be a flip $\mu_{(A,B)}$, where $B=\{x\geq a\}$ and $A=L\setminus B$.
\end{definition}

\begin{lemma}
  Let $L$ be a poset with the least element $0_L$, and let $a\in L$ be an atom. Then $L'=\mu_{a}(L)$ is a poset with the least element $a$.
  \begin{proof}
    We show that $x\geq_{L'} a$ holds for all $x\in L$. 
    If $x\in A$, $a\leq_{L'}0_L\leq_{L'}x$. The case $x\in B$ is clear.
  \end{proof}
  \label{flip on atom and minimum one}
\end{lemma}

\begin{lemma}
  Let $L_1 \xmapsto[]{\mu_{(A,B)}} L_2$ be a flip. If both $L_1$ and $L_2$ have the least element, then $\mu_{(A,B)}$ is a flip on atom. 
  \begin{proof}
    Since the least element of $L_1$ is also the least element of $A$, we denote it by $0_A$; similarly, since the least element of $L_2$ is also the least element of $B$, we denote it by $0_B$.
    Thus, we obtain $0_A \prec_{L_1} 0_B$, and hence $B=\{x\geq_{L_1} 0_B\}$.
  \end{proof}
  \label{flip on atom and minimum two}
\end{lemma}

\begin{definition}
  Let $L$ be a poset. We define a graph $G(L)$ as follows:
  \begin{itemize}
    \item A vertex is labelled by $L'$ such that $L'$ is obtained from $L$ by applying a finite sequence of flips and such that $L'$ has the least element. 
    \item An edge joins two vertices $L_1$ and $L_2$ if a flip $L_1 \xmapsto[]{\mu_{(A,B)}} L_2$ exists.
  \end{itemize}
  \label{the flip graph G}
\end{definition}

\begin{theorem}
  Let $L$ be a connected poset with the least element.
  Then $G(L)$ is isomorphic to the underlying undirected graph of $\Hasse(L)$.
  \begin{proof}
    It follows from \cref{flip and minimum,flip on atom and minimum one,flip on atom and minimum two}.
  \end{proof}
  \label{flip graph and Hasse quiver}
\end{theorem}

\section{Mutable lattices}
\label{sec:Mutable lattices}
In this section, we define mutable lattices and study their properties.
In particular, we will show that mutable lattices are semidistributive (\cref{mutable implies semidistributive}).

\begin{definition}
\label{locally mutable}
  Let $L$ be a lattice. We say that $L$ is \emph{locally mutable} if all of the following conditions hold:
  \begin{enumerate}
    \item For any atom $a$, there exists a coatom $a'$ such that $(a,a')$ is AC-correspondence.
    \item For any coatom $a'$, there exists an atom $a$ such that $(a,a')$ is AC-correspondence.
    \item For any AC-correspondence $(a,a')$, $\mu_{(a,a')}$ is a mutation.
  \end{enumerate}
\end{definition}

\begin{definition}
\label{mutable}
  Let $L$ be a lattice. We say that $L$ is \emph{mutable} if all lattices obtained from $L$ by a finite sequence of mutations are locally mutable lattices.
\end{definition}

\begin{lemma}
\label{locally mutable and AC-correspondence}
  Let $L$ be a locally mutable lattice. Then the following hold:
  \begin{enumerate}
    \item AC-correspondence $(a,a')$ gives a bijection $a\mapsto a'$ between atoms and coatoms.
    \item $1_L$ is the supremum of all atoms.
    \item For any $x,y\in L$, $x\vee y=1_L \Leftrightarrow y\geq \bigvee \{a\in L\ |\ a\nleq x, atom\}$ holds.
  \end{enumerate}
  \begin{proof}
    (1) If two coatoms correspond to atom $a$, then $A$ has two coatoms but does not have $1$.
    Hence, $A$ is not sublattice, which contradicts the sublattice condition.
    Thus, $a\mapsto a'$ is injective, and hence coatoms are at least as much as atoms.
    A similar argument shows that atoms are at least as much as coatoms.
    Therefore $a\mapsto a'$ is bijective.
    
    (2) We proceed by contradiction. By assumption, there exists $x\neq 1$ such that $x$ is more than or equal to all atoms.
    Hence, there exists a coatom $a'$ such that $a'$ is more than or equal to all atoms.
    However, in this case, we cannnot take atom $a$ such that $(a,a')$ is an AC-correspondence, which is a contradiction.
    
    (3) $(\Leftarrow)$ follows from (2).
    $(\Rightarrow)$ We prove the contrapositive. Let $a$ be an atom, and $a'$ is a coatom such that $(a,a')$ is an AC-correspondence. Then, for any $a\nleq x,y$, $x\vee y \leq a' \ngeq a$ holds.
    Thus, $a\nleq x\vee y$ holds, and hence $x\vee y \neq 1$.
  \end{proof}
\end{lemma}

\begin{lemma}
  Let $L$ be a locally mutable lattice. Take $a,x,y\in L$ such that $a$ is an atom, $x\leq y$, $a\nleq y$ (resp.~$a\leq x$). Also we take $L'=\mu_{a}(A)$.
  Then $[x,y]_{L}$ and $[x,y]_{L'}$ are isomorphic lattices
  ; in particular, the isomorphism $L\in z\to z\in L'$ can be defined.
  \begin{proof}
    It follows from the fact that a mutation does not change the lattice structure of $A$ (resp.~$B$).
  \end{proof}
  \label{interval preserved under mutation}
\end{lemma}

\begin{lemma}
  For a lattice $L$, The following are equivalent:
  \begin{enumerate}
    \item For all $x,y,z\in L$, the following hold (which is called a \textnormal{semidistributive} lattice):
    \[ x\vee y=x\vee z=w \Rightarrow x\vee(y\wedge z)=x\vee y\ \]
    \[ x\wedge y=x\wedge z=w \Rightarrow x\wedge(y\vee z)=x\wedge y \]
    \item For any $x,y\in L$ with $x\leq y$, $U=\{z\in L\ |\ x\vee z=y\}$ has the least element (\textnormal{i.e.,} $U$ contains the element that is the infimum of $U$ on $L$) and $U'=\{z'\in L\ |\ y\wedge z'=x\}$ has the greatest element (\textnormal{i.e.,} $U'$ contains the element that is the supremum of $U'$ on $L$).
  \end{enumerate}
  \begin{proof}
    $(1)\Rightarrow (2)$ For our assumption, $U$ is closed under $\wedge$. Then the infimum of $U$(=meet of all elements in $U$) is in $U$. A similar argument applies to $U'$.
    $(2)\Rightarrow (1)$ It suffices to show that $U$ is closed under $\wedge$. Denote $0_U$ as the least element of $U$.
    For every $z,w\in U$, $0_U \leq z\wedge w \leq z$. Since $y=x\vee 0_U \leq x\vee (z\wedge w)\leq x\vee z=y$, we obtain $z\wedge w\in U$. A similar argument applies to $U'$.
  \end{proof}
  \label{semidistributive and infimum}
\end{lemma}

\begin{theorem}
\label{mutable implies semidistributive}
  A mutable lattice is semidistributive.
  \begin{proof}
  Let $L$ be a mutable lattice.
  For every $x\leq y$, we set $U=\{z\in L\ |\ x\vee z=y\}$ and $U'=\{z'\in L\ |\ y\wedge z'=x\}$. By \cref{semidistributive and infimum}, it suffices to show that $U$ has the least element and $U'$ has the greatest element.
  We only prove the former statement since the latter can be proved in the same manner.
    It follows easily that $U\subset [0_L,y]$.
    Take arbitrary covering chain $(y_0,\cdots,y_k)$ from $y$ to $1_L$.
    Let $L_0=L,\ L_{i+1}=\mu_{y_{k-i-1}}(L_i)$, where $\mu_{a'}$ on a lattice $M$ is defined as $\mu_{a}$ such that $(a,a')$ is an AC-correspondence on $M$. Then $L'\coloneqq L_k$ has the greatest element $y$. 
    In this sequence of mutations, $y$ is always in a footwall. Hence, $[0_L,y]_L$ is always in a footwall. By \cref{interval preserved under mutation}, $[0_L,y]_L'=[0_L,y]_L$.
    Then for any $z\in [0_L,y]$, $x\vee_{L}z=y \Leftrightarrow x\vee_{L'}z=y$.
    
    Since $L'$ is mutable, we can take $w\in L'$ such that $w\leq_{L'}z \Leftrightarrow x\vee_{L'}z=y$ by applying \cref{locally mutable and AC-correspondence}.
    Moreover, $0_L \leq_{L'} z$, hence $z\geq_{L'} w\vee_{L'} 0_L$. We denote $z'=w\vee_{L'} 0_L$.
    Now we show $x\vee_{L}z'=y$.
    Since $z'\in [0_L,y]_L'$, we have $z'\in [0_L,y]_L$. Hence, since $x,z'\in [0_L,y]$ and $x\vee_{L'}z'=y$ hold, $x\vee_{L} z'=y$ holds.
    This shows that the infimum of $U$ is $z'$.
  \end{proof}
\end{theorem}

\begin{remark}
  Cambrian lattices are semidistributive \cite{MR4579952}.
\end{remark}

\begin{remark}
  We give an example of a lattice that is semidistributive but not mutable.
  \cref{fig:semidistributive and locally mutable} is both semidistributive and locally mutable. However, \cref{fig:neither semidistributive nor locally mutable}, obtained from \cref{fig:semidistributive and locally mutable} by applying a mutation once, is neither semidistributive nor locally mutable.
    \begin{figure}[htbp]
\centering
\begin{tabular}{cc}
  \begin{minipage}[t]{0.3\linewidth}
  \begin{tikzpicture}[scale=0.5]

\node[name=max,draw,inner sep=2pt,circle] at (0,0) {};
\node[name=A,draw,inner sep=2pt,circle] at (0,-2) {};
\node[name=B1,draw,inner sep=2pt,circle] at (-2,-4) {};
\node[name=B2,draw,inner sep=2pt,circle] at (2,-4) {};
\node[name=C,draw,inner sep=2pt,circle] at (0,-6) {};
\node[name=D,draw,inner sep=2pt,circle] at (4,-4) {};
\node[name=min,draw,inner sep=2pt,circle] at (0,-8) {};

\draw[black] (max)--(A);
\draw[black] (max)--(D);
\draw[black] (A)--(B1);
\draw[black] (A)--(B2);
\draw[black] (B1)--(C);
\draw[black] (B2)--(C);
\draw[black] (C)--(min);
\draw[black] (D)--(min);

\end{tikzpicture}
  \subcaption{Both semidistributive and locally mutable.}
  \label{fig:semidistributive and locally mutable}
  \end{minipage} &
  
  \begin{minipage}[t]{0.3\linewidth}
  \begin{tikzpicture}[scale=0.5]

\node[name=max,draw,inner sep=2pt,circle] at (0,0) {};
\node[name=A1,draw,inner sep=2pt,circle] at (-2,-2) {};
\node[name=A2,draw,inner sep=2pt,circle] at (2,-2) {};
\node[name=B,draw,inner sep=2pt,circle] at (0,-4) {};
\node[name=C,draw,inner sep=2pt,circle] at (0,-6) {};
\node[name=D,draw,inner sep=2pt,circle] at (4,-2) {};
\node[name=min,draw,inner sep=2pt,circle] at (0,-8) {};

\draw[black] (max)--(A1);
\draw[black] (max)--(A2);
\draw[black] (max)--(D);
\draw[black] (A1)--(B);
\draw[black] (A2)--(B);
\draw[black] (B)--(C);
\draw[black] (C)--(min);
\draw[black] (D)--(min);

\end{tikzpicture}
  \subcaption{Neither semidistributive nor locally mutable.}
  \label{fig:neither semidistributive nor locally mutable}
  \end{minipage}
\end{tabular}
\caption{}
\end{figure}

\end{remark}

\begin{example}
  There are examples of mutable lattices. 
  \begin{enumerate}
    \item A polygon 
    \item $\{0,1\}^n$, where $\{0,1\}$ denotes the two-element chain and $n$ is a positive integer
    \item The weak order of a finite Coxeter group
    \item A $A_3$ Cambrian lattice
    \item A $B_3$ Cambrian lattice
  \end{enumerate}
\end{example}

The cases (1),(2), and (3), these lattices remain isomorphic to their original forms regardless of how a mutation is applied.
The proof of the case (3) will be given in \cref{sec:Flips of the weak order of finite Coxeter groups}.

The cases (4) and (5), however, it may be mutated into a lattice that is not isomorphic to the original lattice(\cref{fig:mutation graph of A3 Ordovician lattice,fig:mutation graph of B3 Ordovician lattice}).

The case (1) includes $A_2$ and $B_2$ Cambrian lattices, and (2) includes $A_1$ Cambrian lattice.
Hence, $A_1,A_2,A_3,B_2,B_3$ Cambrian lattices are mutable.

\begin{conjecture}
\label{conj:mutable implies N-regular}
  Let $L$ be a mutable lattice. Then, in the underlying undirected graph of $\Hasse(L)$, the degree of every vertex is equal to the constant $N$ (which is called an \textnormal{$N$-regular graph}). 
\end{conjecture}
The converse is false (\cref{fig:BadCase1,fig:BadCase2}). 

For the following conjecture, we will later provide a more detailed statement (\cref{conj:structure of Ordovician lattice}).
\begin{conjecture}
  Let $Q$ be a Coxeter quiver corresponding to a finite Coxeter group. Then the Cambrian lattice $\Camb(Q)$ associated with $Q$ is mutable.
\end{conjecture}
From the viewpoint of representation theory, we propose another conjecture:
\begin{conjecture}
\label{conj:mutation and quiver with potential}
  Let $(Q,W)$ be a quiver with potential where $Q$ is a quiver such that $Q$ has no loops or 2-cycles. 
  Denote by $\mathcal{P}(Q,W)$ the Jacobian algebra associated with $(Q,W)$.
  Assume that $\dim (\mathcal{P}(Q,W)) < \infty$.
  Then there exists a mutation that maps $L=\Tors(\mathcal{P}(Q,W))$ to a lattice isomorphic to 
$L'=\Tors(\mathcal{P}(\mu_i(Q,W)))$.
Here, $\Tors(A)$ denotes the lattice of torsion classes of the category of finitely generated left $A$-modules, and $\mu_i(Q,W)$ is the mutation of $(Q,W)$ at a vertex $i$ of $Q$ \textnormal{\cite{MR2480710}}.
\end{conjecture}

The results in the rest of this section will not be used in this paper, but we include them since they might be of independent interest.

\begin{proposition}
  Let $L$ be a mutable lattice. Then the following hold:
  \begin{enumerate}
    \item For all $x\in L$, there exists exactly one $u(x)\in L$ such that $D(x,y,u(x))=0$ for all $y\in L$.
    \item $u(0)=1$. Moreover, for any AC-correspondence$(a,a')$, $u(a)=a'$.
  \end{enumerate}
  \begin{proof}
    (1) Since $L$ is a finite lattice, $\Hasse(L)$ is connected. Thus, by \cref{flip and minimum}, we obtain $L'$ such that its least element is $x$ and can be obtained from $L$ by flips on an atom.
    Since $L$ is mutable, flips on an atom are always mutations.
    Thus, $L'$ is a lattice. Hence, if we take $u(x)=1_{L'}$, the result follows. 
    We next prove uniqueness by contradiction.
    By the assumption for contradiction, there exists $x$ for which there are multiple $u(x)$ satisfying the conditions of the proposition. We denote them by $u_1,u_2$.
    Then, the following equation holds:
    \[d(x,u_1)=d(x,u_2)+d(u_2,u_1)=d(x,u_1)+d(u_1,u_2)+d(u_2,u_1)\]
    Since $d(u_1,u_2)+d(u_2,u_1)=0$, we obtain $u_1=u_2$, which is a contradiction.
    (2) follows from \cref{AC-correspondence and the map u}.
  \end{proof}
  \label{the map u}
\end{proposition}

\begin{proposition}
  Let $L$ be a mutable lattice. Then the following hold:
  \begin{enumerate}
    \item The map $u:x\mapsto u(x)$ is a bijection.
    \item The map $y\mapsto (d(x,y),y)$ gives a bijection from $L$ to the interval $[(0,x),(d(x,u(x)),u(x))]$.
  \end{enumerate}
  \begin{proof}
    (1) Since $L$ has only finitely many elements, it suffices to prove injectivity. For all $x,x'$ such that $u(x)=u(x')$, the following equation holds:
    \begin{align*}
      d(x,u(x))&=d(x,x')+d(x',u(x))=d(x,x')+d(x',u(x'))\\
      &=d(x,x')+d(x',x)+d(x,u(x'))\\
      &=d(x,x')+d(x',x)+d(x,u(x))
    \end{align*}
    Therefore $x=x'$.
    
    (2) The injectivity is clear. Since $(i,y)\in [(0,x),(d(x,u(x)),u(x))]$ if and only if $i=d(x,y)$, surjectivity follows.
  \end{proof}
\end{proposition}

\begin{proposition}
  The following hold:
  \begin{enumerate}
    \item $d(u(x),u(y))+d(u(y),u(x))=d(x,y)+d(y,x)$
    \item $u(x)$ and $u(y)$ have a cover relationship if and only if $x$ and $y$ have a cover relationship.
  \end{enumerate}
  \begin{proof}
  (1) follows from the equation below.
  \begin{align*}
  &d(u(x),u(y))+d(u(y),u(x))\\
  &=d(y,u(y))-d(y,u(x))+d(x,u(x))-d(x,u(y))\\
  &=d(y,u(y))-d(x,u(x))+d(x,y)+d(x,u(x))-d(y,u(y))+d(y,x)\\
  &=d(x,y)+d(y,x)
\end{align*}
  (2) follows from (1).
  \end{proof}
\end{proposition}

\section{Flips of the weak order of finite Coxeter groups}
\label{sec:Flips of the weak order of finite Coxeter groups}
In this section, we state the weak order of a finite Coxeter group is mutable.

Hereafter, we assume that $(W,S)$ is a finite Coxeter system.
We denote the identity element of $W$ by $id$, and the longest element of $W$ by $w_0$.
For $s_i,s_j\in S$, let $m(s_i,s_j)$ denote the order of the element $s_i s_j$.
We write the length of $p\in W$ by $l(p)$. 
The Coxeter diagram of $(W,S)$ is a graph as follows;
\begin{enumerate}
  \item Each vertices are labelled by an element of $S$.
  \item For any $s_i,s_j\in S$ satisfying $m(s_i,s_j)\geq 3$, draw an edge between $s_i$ and $s_j$, labelled by $m(s_i,s_j)$, omitting the label when $m(s_i,s_j)=3$.
\end{enumerate}

\begin{example}
  We display the Coxeter diagrams corresponding to all finite Coxeter groups in \cref{fig:finite-type Coxeter quiver}.
    \begin{figure}
    \centering
    \begin{tabular}{ccc}
      \begin{minipage}[t]{0.3\linewidth}
  \centering
  \begin{tikzpicture}[scale=0.35]

\node[name=A1,draw,inner sep=2pt,circle] at (-4,0) {};
\node[name=A2,draw,inner sep=2pt,circle] at (-2,0) {};
\node[name=A3,draw,inner sep=2pt,circle] at (0,0) {};
\node[name=A4,inner sep=2pt,circle] at (2,0) {$\cdots$};
\node[name=A5,draw,inner sep=2pt,circle] at (4,0) {};

\draw[black] (A1)--(A2)--(A3)--(A4)--(A5);
\end{tikzpicture}
  \subcaption*{Type-A}
  \label{fig:type-A Coxeter quiver}
  \end{minipage}&
  
\begin{minipage}[t]{0.3\linewidth}
\centering
  \begin{tikzpicture}[scale=0.35]

\node[name=A1,draw,inner sep=2pt,circle] at (-4,0) {};
\node[name=A2,draw,inner sep=2pt,circle] at (-2,0) {};
\node[name=A3,draw,inner sep=2pt,circle] at (0,0) {};
\node[name=A4,inner sep=2pt,circle] at (2,0) {$\cdots$};
\node[name=A5,draw,inner sep=2pt,circle] at (4,0) {};

\draw[black] (A1)--(A2) node[midway, above] {4};
\draw[black] (A2)--(A3)--(A4)--(A5);
\end{tikzpicture}
  \subcaption*{Type-B}
  \label{fig:type-B Coxeter quiver}
  \end{minipage}&
  \begin{minipage}[t]{0.3\linewidth}
  \centering
    \begin{tikzpicture}[scale=0.35]

\node[name=A1,draw,inner sep=2pt,circle] at (-4,0) {};
\node[name=A2,draw,inner sep=2pt,circle] at (-2,0) {};
\node[name=A3,draw,inner sep=2pt,circle] at (-2,-2) {};
\node[name=A4,draw,inner sep=2pt,circle] at (0,0) {};
\node[name=A5,inner sep=2pt,circle] at (2,0) {$\cdots$};
\node[name=A6,draw,inner sep=2pt,circle] at (4,0) {};

\draw[black] (A1)--(A2)--(A4)--(A5)--(A6);
\draw[black] (A2)--(A3);
\end{tikzpicture}
\subcaption*{Type-D}
  \label{fig:type-D Coxeter quiver}
  \end{minipage}\\
  
\begin{minipage}[t]{0.3\linewidth}
\centering
\begin{tikzpicture}[scale=0.35]

\node[name=A1,draw,inner sep=2pt,circle] at (-4,0) {};
\node[name=A2,draw,inner sep=2pt,circle] at (-2,0) {};
\node[name=A3,draw,inner sep=2pt,circle] at (0,0) {};
\node[name=A4,draw,inner sep=2pt,circle] at (0,-2) {};
\node[name=A5,inner sep=2pt,circle] at (2,0) {$\cdots$};
\node[name=A6,draw,inner sep=2pt,circle] at (4,0) {};

\draw[black] (A1)--(A2)--(A3)--(A5)--(A6);
\draw[black] (A3)--(A4);
\end{tikzpicture}
\subcaption*{Type-E}
  \label{fig:type-E Coxeter quiver}
\end{minipage}&

\begin{minipage}[t]{0.3\linewidth}
  \centering
  \begin{tikzpicture}[scale=0.35]

\node[name=A1,draw,inner sep=2pt,circle] at (-3,0) {};
\node[name=A2,draw,inner sep=2pt,circle] at (-1,0) {};
\node[name=A3,draw,inner sep=2pt,circle] at (1,0) {};
\node[name=A4,draw,inner sep=2pt,circle] at (3,0) {};

\draw[black] (A1)--(A2);
\draw[black] (A2)--(A3) node[midway, above] {4};
\draw[black] (A3)--(A4);
\end{tikzpicture}
\subcaption*{$F_4$}
  \label{fig:type-F Coxeter quiver}
\end{minipage}&

\begin{minipage}[t]{0.3\linewidth}
  \centering
\begin{tikzpicture}[scale=0.35]

\node[name=A1,draw,inner sep=2pt,circle] at (-1,0) {};
\node[name=A2,draw,inner sep=2pt,circle] at (1,0) {};

\draw[black] (A1)--(A2) node[midway, above] {6};
\end{tikzpicture}
\subcaption*{$G_2$}
  \label{fig:type-G Coxeter quiver}
\end{minipage}\\

       \begin{minipage}[t]{0.3\linewidth}
  \centering
  \begin{tikzpicture}[scale=0.35]

\node[name=A1,draw,inner sep=2pt,circle] at (-3,0) {};
\node[name=A2,draw,inner sep=2pt,circle] at (-1,0) {};
\node[name=A3,draw,inner sep=2pt,circle] at (1,0) {};

\draw[black] (A1)--(A2) node[midway, above] {5};
\draw[black] (A2)--(A3);
\end{tikzpicture}
\subcaption*{$H_3$}
  \label{fig:type-H3 Coxeter quiver}
\end{minipage}&
       \begin{minipage}[t]{0.3\linewidth}
  \centering
  \begin{tikzpicture}[scale=0.35]

\node[name=A1,draw,inner sep=2pt,circle] at (-3,0) {};
\node[name=A2,draw,inner sep=2pt,circle] at (-1,0) {};
\node[name=A3,draw,inner sep=2pt,circle] at (1,0) {};
\node[name=A4,draw,inner sep=2pt,circle] at (3,0) {};

\draw[black] (A1)--(A2) node[midway, above] {5};
\draw[black] (A2)--(A3);
\draw[black] (A3)--(A4);
\end{tikzpicture}
\subcaption*{$H_4$}
  \label{fig:type-H4 Coxeter quiver}
\end{minipage}&
\begin{minipage}[t]{0.3\linewidth}
  \centering
  \begin{tikzpicture}[scale=0.35]

\node[name=A1,draw,inner sep=2pt,circle] at (-1,0) {};
\node[name=A2,draw,inner sep=2pt,circle] at (1,0) {};

\draw[black] (A1)--(A2) node[midway, above] {$p$};
\end{tikzpicture}
\subcaption*{$I_2(p)$}
  \label{fig:type-I Coxeter quiver}
\end{minipage}
    \end{tabular}
    \caption{Finite-type Coxeter diagrams}
    \label{fig:finite-type Coxeter quiver}
  \end{figure}
\end{example}

\begin{definition}
Let $(W,S)$ be a finite Coxeter system. The \emph{(right) weak order} of $(W,S)$ is the poset $(W,\leq)$, where $\leq$ is defined by $p\leq q$ if and only if there exists a reduced word of $p$ which is a prefix of a reduced word of $q$ (\text{i.e.,} $l(q)=l(p)+l(p^{-1}q)$). 
\end{definition}

\begin{remark}
It is well-known that the weak order of a finite Coxeter system is a lattice (See \cite[Section3.2]{MR2133266}).  
\end{remark}

\begin{lemma}
Let $x\in W$, and let $a\in S$. If $a\nleq x$, then $a\leq ax$. If $a\leq x$, then $a\nleq ax$. 
\begin{proof}
We first prove the latter.
By assumption, we can take a reduced word of $x$ that starts with $a$, then $l(ax)\lneq l(x)$.
If $ax$ has a reduced word that starts with $a$, then $l(x) = l(aax)\lneq l(ax)$, which is a contradiction. Therefore $a\nleq ax$.
Since $a^2=id$, the former follows from the latter.
\end{proof}
\end{lemma}

\begin{lemma}
\label{remove prefix}
  Let $a$ be an atom, and let $p,q\geq a$. Then, $ap\leq aq \Leftrightarrow p\leq q$.
  \begin{proof}
    This follows from the following equivalent reformulation.
    \begin{align*}
      ap\leq aq &\Leftrightarrow l(aq)=l(ap)+l((ap)^{-1}aq)\\
      &\Leftrightarrow l(q)+1=l(p)+1+l(p^{-1}q)\\
      &\Leftrightarrow l(q)=l(p)+l(p^{-1}q)\\
      &\Leftrightarrow p\leq q
    \end{align*}
  \end{proof}
\end{lemma}

\begin{lemma}
The following hold.
\begin{enumerate}
  \item If $p\prec q$, $pw_0\succ qw_0$.
  \item If $p\leq q$, $pw_0\geq qw_0$.
\end{enumerate}
\begin{proof}(2) It suffices to show $l(pw_0)=l(qw_0)+l((qw_0)^{-1}pw_0)$. This can be proved as follows.
\begin{align*}
  (RHS)&=l(qw_0)+l(w_0q^{-1}pw_0)=l(w_0)-l(q)+l(w_0)-l(w_0q^{-1}p)\\
  &=l(w_0)-l(q)+l(w_0)-l(p^{-1}qw_0)\\
  &=l(w_0)-l(q)+l(w_0)-l(w_0)+l(p^{-1}q)\\
  &=l(w_0)-l(q)+l(q^{-1}p)=l(w_0)-l(p)=(LHS)
\end{align*}
(1) follows from by (2) and computation of the length. This can be proved as follows:
\[l(w_0q^{-1}pw_0)=l(pw_0)-l(qw_0)=l(w_0)-l(p)-(l(w_0)-l(q))=l(q)-l(p)\]
\end{proof}
\label{reversing order}
\end{lemma}

The following is the main theorem of this section.

\begin{theorem}
\label{mutation of weak order}
For $a\in S$, the following hold:
\begin{enumerate}
  \item $(a,aw_0)$ is an AC-correspondence.
  \item $W'=\mu_a(W)$ is isomorphic to the weak order of $W$. In particular, $\mu_{a}$ is a mutation, and $W$ is mutable.
\end{enumerate}
\begin{proof} (1) Since $a\leq w_0$, $a\nleq aw_0$. 
Then, by \cref{remove prefix}, $x\leq aw_0$ holds for any $x\ngeq a$.
By \cref{reversing order}, exactly half of the elements in $W$ are greater than or equal to $a$. Hence, exactly half of the elements in $W$ are lower than or equal to $aw_0$. 

(2) 
$\mu_a$ reverses a direction of a cover relation $p\prec q$ if and only if $p\prec q,\ p\ngeq a,\ q\geq a$.
Consider $f(x)=ax$ for $x\in W$. Then for any cover relation $p \prec q$, $f(p)\prec f(q)$ or $f(p)\succ f(q)$ holds. 
Moreover, $f(p)\succ f(q)$ holds if and only if $p\prec q,\ p\ngeq a,\ q\geq a$.
Therefore, the relation $x\leq^{'} y \coloneqq f(x)\leq f(y)$ on $W$ is isomorphic to $W'=\mu_a(W)$.
\end{proof}
\end{theorem}

  For $U\subset S$, define $W'$ as the subgroup generated by $U$. We call $(W',U)$ a \emph{parabolic subgroup} of $(W,S)$.

\begin{lemma}
  Let $(W',U)$ be a parabolic subgroup of $(W,S)$. Then $(W',U)$ is an ideal of $(W,S)$.
  \begin{proof}
    We give only the main ideas of the proof.
    Let $w_0'$ be the longest element of $(W',U)$. Then, by minimal length coset representative (cf.~\cite[Section 1.10]{MR1066460}), $(W',U)=[id,w_0']$.
  \end{proof}
\end{lemma}

We conclude this section with the following proposition, which will be used in the proof of \cref{general type Ordovician lattice and the infinite case}.

\begin{proposition}
\label{quotient of weak order}
  Let $(W,S)$ be a finite Coxeter system, let $a\in S$, and let $f:W\to L$ be a surjective lattice homomorphism such that $f(id)=f(a)$.
  Then $L$ is a quotient lattice of a parabolic subgroup $(W',S-\{a\})$ .
  \begin{proof}
    Let $w_{!a}$ be the longest element of $W'$.
    We show $a\vee w_{!a} = w_0$ and $a\wedge w_{!a} = id$. The latter follows from $a\nleq w_{!a}$. The former follows from that $a\vee w_{!a}$ is greater than or equal to all elements in $S$.  
    For any lattice homomorphism $f$, if $f(x\wedge y)=f(x)$ then $f(y)=f(x\vee y)$. This can be proved as follows:
    \[f(x\vee y)=f(x)\vee f(y)=f(x\wedge y)\vee f(y)=f((x\wedge y)\vee y)=f(y) \]
    Since $f(id)=f(a)$, $f(w_{!a})=f(w_0)$. Then $f(x)=f(x\wedge w_{!a})$.
    Let $f'(x)=x\wedge w_{!a}$ for any $x\in W$. Then, $f'$ is order homomorphism from $(W,S)$ to $(W',S-\{a\})$, and preserves $\wedge$.
    Since no element of $S-\{a\}$ is lower than or equal to $f'(w_{!a}w_0)$, we obtain $f'(w_{!a}w_0)=id$.
    By minimal length coset representative (cf.~\cite[Section 1.10]{MR1066460}), for any $x$, there exists exactly one expression $x=x_{!a}x^{!a}$ such that $x^{!a}\leq w_{!a}w_0$ and $x_{!a}\leq w_{!a}$. Moreover, $x_{!a}=f'(x)$ and $l(x)=l(x_{!a})+l(x^{!a})$ holds.
    Let us prove the following lemma:
    \[for \ all \ y\leq w_{!a}, \ f'(x)\leq y \Leftrightarrow x\leq yw_{!a}w_0\]
    $(\Leftarrow)$ follows by applying $f'$ to both sides.
    $(\Rightarrow)$ It suffices to prove $x\leq f'(x)w_{!a}w_0\leq yw_{!a}w_0$. 
    $x\leq f'(x)w_{!a}w_0$ reduces to $x^{!a}\leq w_{!a}w_0$ by \cref{remove prefix}, and hence follows. 
    We show $f'(x)w_{!a}w_0\leq yw_{!a}w_0$ by a sequence of equivalent transformations.
    $f'(x)w_{!a}w_0\leq yw_{!a}w_0$ is equivalent to $f'(x)w_{!a}\geq yw_{!a}$, which is in turn equivalent to $f'(x)\leq y$ because both sides are in $W'$.
    Therefore the lemma follows.
    We now return to the proof of the theorem.
    By this lemma, for any $z\leq w_{!a}$, we have the following equivalences:
    \begin{align*}
      f'(x\vee y) \leq z \ &\Leftrightarrow \ x\vee y \leq zw_{!a}w_0 \\
      &\Leftrightarrow \ x \leq zw_{!a}w_0 \ \text{and} \  y \leq zw_{!a}w_0 \\
      &\Leftrightarrow f'(x)\leq z \ \text{and} \  f'(y)\leq z
    \end{align*}
    Then $f'$ preserves $\vee$, and hence $f'$ is surjective lattice homomorphism.
    Therefore, there exists a surjective lattice homomorphism $g$ such that $f=gf'$. 
  \end{proof}
\end{proposition}

\section{Flips of Cambrian lattices}
\label{sec:Flips of Cambrian lattices}
In this section, we show that a Cambrian lattice is locally mutable and can be mutated into other Cambrian lattices.
Also we state that under certain mutations, a Cambrian lattice is transformed into a structure that is not a Cambrian lattice. We call this resulting structure an Ordovician lattice and discuss its properties.

\begin{definition}[{\cite[Section 3]{MR2486939}}]
  Let $(W,S)$ be a finite-type Coxeter system, $x\in W$, and let $c=s_{i_1}\cdots s_{i_n}$ be a Coxeter element.
  The \emph{$c$-sorting word} for $x$ is the lexicographically leftmost subword of $c_{\infty}\coloneq s_{i_1}\cdots s_{i_n}|s_{i_1}\cdots s_{i_n}|s_{i_1}\cdots s_{i_n}|\cdots$ which is a reduced word of $x$.
  We regard $|$ as dividers, thus the $c$-sorting word is expressed as a sequence of subsets of $S$. 
  An element $x$ is a \emph{$c$-sortable element} if this sequence is weakly decreasing under inclusion.
  Define the congruence $\Theta_{c}$ on $W$ by $x\equiv y\Leftrightarrow \pi^c_{\downarrow}(x)=\pi^c_{\downarrow}(y)$, where $\pi^c_{\downarrow}(x)$ is the unique maximal $c$-sortable element satisfying $\pi^c_{\downarrow}(x)\leq x$.
  A \emph{Cambrian lattice} $\Camb(c)$ is the quotient lattice of the weak order of $(W,S)$ modulo the congruence $\Theta_{c}$.
\end{definition}

\begin{definition}
  Let $(W,S)$ be a finite-type Coxeter system, and let $c=s_{i_1}\cdots s_{i_n}$ be a Coxeter element.
  A \emph{Coxeter quiver} $\overrightarrow{B}$ associated with $c$ is defined as follows:
  \begin{enumerate}
    \item It is a quiver with labelled arrows whose underlying undirected graph (with labelled edges) is the Coxeter diagram of $W$.
    \item For $j\neq k$, an edge between $j$ and $k$ is oriented as $j\to k$ if $j=i_s, \ k=i_t$ and $s\lneq t$, and as $k\to j$ otherwise.
  \end{enumerate}
  By convention, the BGP-reflections of $\overrightarrow{B}$ leave the arrow labellings unchanged.
\end{definition}

We can uniquely determine $c$ from $\overrightarrow{B}$.
Thus, we define $\Camb(\overrightarrow{B})$ by $\Camb(c)$.

\begin{proposition}
\label{direction of tree}
Suppose that $\overrightarrow{B_1},\overrightarrow{B_2}$ be quivers whose underlying undirected graphs are isomorphic, and that this graph is one of the graphs shown in \cref{fig:finite-type Coxeter quiver}.
Then $\overrightarrow{B_1}$ and $\overrightarrow{B_2}$ can be obtained from each other by a finite sequence of BGP-reflections.

  \begin{proof}
    For type-A, the proof is straightforward by induction of $n$.
  The other types follow by a similar argument.
  \end{proof}
\end{proposition}

\begin{example}
  We present a figure showing all Cambrian lattices of type $A_3$.
\begin{figure}[htbp]
\centering
\begin{tabular}{ccc}
  \begin{minipage}[t]{0.3\linewidth}
  \begin{tikzpicture}[scale=0.5]

\node[name=A0,draw,inner sep=2pt,circle] at (0,0) {};
\node[name=A1,draw,inner sep=2pt,circle] at (-2,-2) {};
\node[name=A2,draw,inner sep=2pt,circle] at (0,-2) {};
\node[name=A3,draw,inner sep=2pt,circle] at (2,-2) {};
\node[name=A4,draw,inner sep=2pt,circle] at (-1,-4) {};
\node[name=A5,draw,inner sep=2pt,circle] at (4,-4) {};
\node[name=A6,draw,inner sep=2pt,circle] at (2,-6) {};
\node[name=A7,draw,inner sep=2pt,circle] at (4,-6) {};
\node[name=A8,draw,inner sep=2pt,circle] at (1,-8) {};
\node[name=A9,draw,inner sep=2pt,circle] at (4,-8) {};
\node[name=Aa,draw,inner sep=2pt,circle] at (-2,-10) {};
\node[name=Ab,draw,inner sep=2pt,circle] at (0,-10) {};
\node[name=Ac,draw,inner sep=2pt,circle] at (2,-10) {};
\node[name=Ad,draw,inner sep=2pt,circle] at (0,-12) {};

\draw[arrows = {-Stealth[scale=1]},black] (A0)--(A1);
\draw[arrows = {-Stealth[scale=1]},black] (A0)--(A2);
\draw[arrows = {-Stealth[scale=1]},black] (A0)--(A3);
\draw[arrows = {-Stealth[scale=1]},black] (A1)--(A4);
\draw[arrows = {-Stealth[scale=1]},black] (A1)--(Aa);
\draw[arrows = {-Stealth[scale=1]},black] (A2)--(A6);
\draw[arrows = {-Stealth[scale=1]},black] (A2)--(A8);
\draw[arrows = {-Stealth[scale=1]},black] (A3)--(A4);
\draw[arrows = {-Stealth[scale=1]},black] (A3)--(A5);
\draw[arrows = {-Stealth[scale=1]},black] (A4)--(Ab);
\draw[arrows = {-Stealth[scale=1]},black] (A5)--(A6);
\draw[arrows = {-Stealth[scale=1]},black] (A5)--(A7);
\draw[arrows = {-Stealth[scale=1]},black] (A6)--(A9);
\draw[arrows = {-Stealth[scale=1]},black] (A7)--(A9);
\draw[arrows = {-Stealth[scale=1]},black] (A7)--(Ab);
\draw[arrows = {-Stealth[scale=1]},black] (A8)--(Aa);
\draw[arrows = {-Stealth[scale=1]},black] (A8)--(Ac);
\draw[arrows = {-Stealth[scale=1]},black] (A9)--(Ac);
\draw[arrows = {-Stealth[scale=1]},black] (Aa)--(Ad);
\draw[arrows = {-Stealth[scale=1]},black] (Ab)--(Ad);
\draw[arrows = {-Stealth[scale=1]},black] (Ac)--(Ad);

\end{tikzpicture}
  \centering
  \captionsetup{width=0.8\textwidth}
  \caption{The $A_3$ Tamari lattice}
  \end{minipage} &
  
  \begin{minipage}[t]{0.3\linewidth}
  \begin{tikzpicture}[scale=0.5]

\node[name=A0,draw,inner sep=2pt,circle] at (0,0) {};
\node[name=A1,draw,inner sep=2pt,circle] at (-2,-2) {};
\node[name=A2,draw,inner sep=2pt,circle] at (0,-2) {};
\node[name=A3,draw,inner sep=2pt,circle] at (2,-2) {};
\node[name=A4,draw,inner sep=2pt,circle] at (-2,-4) {};
\node[name=A5,draw,inner sep=2pt,circle] at (2,-4) {};
\node[name=A6,draw,inner sep=2pt,circle] at (0,-6) {};
\node[name=A7,draw,inner sep=2pt,circle] at (-3,-7) {};
\node[name=A8,draw,inner sep=2pt,circle] at (3,-7) {};
\node[name=A9,draw,inner sep=2pt,circle] at (0,-8) {};
\node[name=Aa,draw,inner sep=2pt,circle] at (-2,-10) {};
\node[name=Ab,draw,inner sep=2pt,circle] at (0,-10) {};
\node[name=Ac,draw,inner sep=2pt,circle] at (2,-10) {};
\node[name=Ad,draw,inner sep=2pt,circle] at (0,-12) {};

\draw[arrows = {-Stealth[scale=1]},black] (A0)--(A1);
\draw[arrows = {-Stealth[scale=1]},black] (A0)--(A2);
\draw[arrows = {-Stealth[scale=1]},black] (A0)--(A3);
\draw[arrows = {-Stealth[scale=1]},black] (A1)--(A7);
\draw[arrows = {-Stealth[scale=1]},black] (A1)--(Ab);
\draw[arrows = {-Stealth[scale=1]},black] (A2)--(A4);
\draw[arrows = {-Stealth[scale=1]},black] (A2)--(A5);
\draw[arrows = {-Stealth[scale=1]},black] (A3)--(A8);
\draw[arrows = {-Stealth[scale=1]},black] (A3)--(Ab);
\draw[arrows = {-Stealth[scale=1]},black] (A4)--(A6);
\draw[arrows = {-Stealth[scale=1]},black] (A4)--(A7);
\draw[arrows = {-Stealth[scale=1]},black] (A5)--(A6);
\draw[arrows = {-Stealth[scale=1]},black] (A5)--(A8);
\draw[arrows = {-Stealth[scale=1]},black] (A6)--(A9);
\draw[arrows = {-Stealth[scale=1]},black] (A7)--(Aa);
\draw[arrows = {-Stealth[scale=1]},black] (A8)--(Ac);
\draw[arrows = {-Stealth[scale=1]},black] (A9)--(Aa);
\draw[arrows = {-Stealth[scale=1]},black] (A9)--(Ac);
\draw[arrows = {-Stealth[scale=1]},black] (Aa)--(Ad);
\draw[arrows = {-Stealth[scale=1]},black] (Ab)--(Ad);
\draw[arrows = {-Stealth[scale=1]},black] (Ac)--(Ad);

\end{tikzpicture}
  \centering
  \captionsetup{width=0.8\textwidth}
  \caption{$A_3$ Cambrian lattice but not a Tamari.}
  \end{minipage} &

  \begin{minipage}[t]{0.3\linewidth}
  \begin{tikzpicture}[scale=0.5]

\node[name=A0,draw,inner sep=2pt,circle] at (0,-12) {};
\node[name=A1,draw,inner sep=2pt,circle] at (-2,-10) {};
\node[name=A2,draw,inner sep=2pt,circle] at (0,-10) {};
\node[name=A3,draw,inner sep=2pt,circle] at (2,-10) {};
\node[name=A4,draw,inner sep=2pt,circle] at (-2,-8) {};
\node[name=A5,draw,inner sep=2pt,circle] at (2,-8) {};
\node[name=A6,draw,inner sep=2pt,circle] at (0,-6) {};
\node[name=A7,draw,inner sep=2pt,circle] at (-3,-5) {};
\node[name=A8,draw,inner sep=2pt,circle] at (3,-5) {};
\node[name=A9,draw,inner sep=2pt,circle] at (0,-4) {};
\node[name=Aa,draw,inner sep=2pt,circle] at (-2,-2) {};
\node[name=Ab,draw,inner sep=2pt,circle] at (0,-2) {};
\node[name=Ac,draw,inner sep=2pt,circle] at (2,-2) {};
\node[name=Ad,draw,inner sep=2pt,circle] at (0,0) {};

\draw[arrows = {-Stealth[scale=1]},black] (A1)--(A0);
\draw[arrows = {-Stealth[scale=1]},black] (A2)--(A0);
\draw[arrows = {-Stealth[scale=1]},black] (A3)--(A0);
\draw[arrows = {-Stealth[scale=1]},black] (A7)--(A1);
\draw[arrows = {-Stealth[scale=1]},black] (Ab)--(A1);
\draw[arrows = {-Stealth[scale=1]},black] (A4)--(A2);
\draw[arrows = {-Stealth[scale=1]},black] (A5)--(A2);
\draw[arrows = {-Stealth[scale=1]},black] (A8)--(A3);
\draw[arrows = {-Stealth[scale=1]},black] (Ab)--(A3);
\draw[arrows = {-Stealth[scale=1]},black] (A6)--(A4);
\draw[arrows = {-Stealth[scale=1]},black] (A7)--(A4);
\draw[arrows = {-Stealth[scale=1]},black] (A6)--(A5);
\draw[arrows = {-Stealth[scale=1]},black] (A8)--(A5);
\draw[arrows = {-Stealth[scale=1]},black] (A9)--(A6);
\draw[arrows = {-Stealth[scale=1]},black] (Aa)--(A7);
\draw[arrows = {-Stealth[scale=1]},black] (Ac)--(A8);
\draw[arrows = {-Stealth[scale=1]},black] (Aa)--(A9);
\draw[arrows = {-Stealth[scale=1]},black] (Ac)--(A9);
\draw[arrows = {-Stealth[scale=1]},black] (Ad)--(Aa);
\draw[arrows = {-Stealth[scale=1]},black] (Ad)--(Ab);
\draw[arrows = {-Stealth[scale=1]},black] (Ad)--(Ac);
\end{tikzpicture}
  \centering
  \captionsetup{width=0.8\textwidth}
  \caption{Upside down the lattice structure of the center.}
  \end{minipage}
\end{tabular}
\end{figure}
  
\end{example}

\begin{remark}[\cite{MR2258260}]
  Let $\overrightarrow{B}$ be a type-A (resp.~type-B) Coxeter quiver.
  If all edges of $\overrightarrow{B}$ have the same orientation, then we call $\Camb(\overrightarrow{B})$ a type-A (resp.~type-B) Tamari lattice.
  For type-A, the $A_n$ Tamari lattice is uniquely determined.
  For type-B, there are 2 types of lattice as a $B_n$ Tamari lattice, and they are opposites of each other.
\end{remark}

\begin{proposition}
  The lattice structure $\Camb(\overrightarrow{B'})$ is the dual order of $\Camb(\overrightarrow{B})$, where $\overrightarrow{B'}$ is obtained from $\overrightarrow{B}$ by reversing the orientation of every edge.
  \begin{proof}
    We provide a graphical proof only for type-A and B in \cref{dual order of Cambrian lattice in section A}.
    For other types, see \cite[Between Example 3.4 and Lemma 3.5]{MR2486939}.
  \end{proof}
  \label{dual order of Cambrian lattice in section 7}
\end{proposition}

\begin{construction}[cf.~\cite{MR2258260}]
\label{General type Cambrian lattice and ideal polygon}
  Let $\overrightarrow{B}$ be a finite-type Coxeter quiver, and let $a,b\in \Camb(\overrightarrow{B})$ be atoms.
  Then $[0,a\vee b]$ is a $(2,k)$-polygon for some $k\geq 2$.
  Hence we define the quiver with labelled arrows whose vertices are indexed by atoms of $\Camb(\overrightarrow{B})$.
  Arrows are defined as follows:
  If $k\geq 3$ holds, we include an arrow labelled by $k$ from the atom in a chain of length $k$ in $[0,a\vee b]$ to the other atom in $[0,a\vee b]$.
  Then the resulting quiver $Q$ is isomorphic to $\overrightarrow{B}$.
  
\end{construction}

In particular, we can take a bijection between the vertices of $\overrightarrow{B}$ and the atoms of $\Camb(\overrightarrow{B})$.

The following theorem relates the mutation of lattices and the BGP-reflection of quivers.
The proof of (2) and (3) was suggested by N.~Reading.

\begin{theorem}
\label{mutation of general type Cambrian lattice}
The following hold;
\begin{enumerate}
  \item Any finite-type of Cambrian lattices are locally mutable.
  \item Let $\overrightarrow{B}$ be a finite-type Coxeter quiver, and let $i\in \overrightarrow{B}$ be a sink or a source. Then $\mu_{\eta((i,i+1))}(\Camb(\overrightarrow{B})) \cong \Camb(\mu_{i}(\overrightarrow{B}))$, where $\mu_{i}$ is a BGP-reflection on $i$.
  \item Let $\overrightarrow{B},\overrightarrow{B'}$ be finite-type Coxeter quivers such that $B\cong B'$. Then $\Camb(\overrightarrow{B})$ is mapped to a lattice isomorphic to $\Camb(\overrightarrow{B'})$ by a finite sequence of mutations.
\end{enumerate}
\begin{proof}
  (1) Let $\eta_{c}:W\to \Camb(c)$ be the natural quotient map.
  Then $\eta_{c}$ sends every atom to an atom since every atom $s\in S$ is a $c$-sortable element.
  Hence, since \cref{mutation of weak order,quotient flip}, $\Camb(c)$ is a locally mutable lattice.
  \cref{prerequisite for quotient flip} verifies that the prerequisites of \cref{quotient flip} is satisfied.
  (2) From the assumption that $i\in \overrightarrow{B}$ is a sink (resp.~source), it follows that if a generator $s_i\in S$ satisfies $\eta(s_i)=a_i$, then it is final (resp.~initial) letter of a reduced word of $c$, where $a_i\in \Camb(\overrightarrow{B})$ is an atom corresponding to $i$.
  Thus, $s_{i}cs_{i}$ is also a Coxeter element corresponds to $\mu_{i}(\overrightarrow{B})$.
  Consequently, $s_{i}cs_{i}$-Cambrian lattice is isomorphic to $\Camb (\mu_{i}(\overrightarrow{B}))$.
  Then, by \cite[Proposition 7.4 and Lemma 7.5]{MR2486939}, we obtain the bijection $[Z_{s_i}]: [w]_{c}\mapsto [Z_{s_i}(w)]_{scs}$ from $c$-Cambrian fan to $s_{i}cs_{i}$-Cambrian fan, where $Z_{s_i}$ is defined between \cite[Proposition 7.4 and Lemma 7.5]{MR2486939}.
  By \cite[Proposition 4.3]{MR2486939}, $[Z_{s_i}]$ preserves a pair $\{x,y\}$ such that $x\prec y$ or $x\succ y$.
  A straightforward computation shows that $Z_{s_i}$ preserves a relation $w\leq v$ if and only if $s_{i}\leq w,v$ or $s_{i}\nleq w,v$.
  Hence, $[Z_{s_i}]$ coincides with $\mu_{\eta(s_i)}$.
  Since analogous versions of \cref{direction of tree} for the other types also hold, (3) follows from (2).
\end{proof}
\end{theorem}

\begin{remark}[Flip-Flop of Cambrian lattices]
  For type-A, the flip in (2) of \cref{mutation of general type Cambrian lattice} can also be described as a Flip-Flop \cite{ladkani2007universalderivedequivalencesposets}.
  In particular, $A=\partial A$ or $B=\partial B$ in this case.
  We give a short graphical proof for this; see \cref{Flip-Flop of Cambrian lattices in section A}.
  \label{Flip-Flop of Cambrian lattices in section 7}
\end{remark}

\begin{definition}
A lattice $L$ is said to be a \emph{type-X Ordovician order} if $L$ is isomorphic to some $L'' \in G(L')$ for some type-X Cambrian lattice $L'$.
We call an Ordovician order $L$ an \emph{type-X Ordovician lattice}\footnote{Ordovician lattices are named after the Ordovician period because Cambrian lattices are named after the Cambrian period (cf.~\cite{MR2258260}). Ordovician period is the next period of the Cambrian period.} if $L$ is a lattice.
\end{definition}

\begin{example}
  We present a type-A Ordovician lattice. This is obtained from The $A_3$ Tamari lattice by a mutation on an atom that is neither a sink nor a source.
      \begin{figure}[htbp]
\centering
\begin{tabular}{c}
  \begin{minipage}[t]{0.9\linewidth}
  \begin{tikzpicture}[scale=0.5]

\node[name=A0,draw,inner sep=2pt,circle] at (0,0) {};
\node[name=A1,draw,inner sep=2pt,circle] at (-2,-2) {};
\node[name=A2,draw,inner sep=2pt,circle] at (0,-2) {};
\node[name=A3,draw,inner sep=2pt,circle] at (2,-2) {};
\node[name=A4,draw,inner sep=2pt,circle] at (-3,-4.5) {};
\node[name=A5,draw,inner sep=2pt,circle] at (-1,-4.5) {};
\node[name=A6,draw,inner sep=2pt,circle] at (1,-4.5) {};
\node[name=A7,draw,inner sep=2pt,circle] at (-1,-6.5) {};
\node[name=A8,draw,inner sep=2pt,circle] at (1,-6.5) {};
\node[name=A9,draw,inner sep=2pt,circle] at (3,-6.5) {};
\node[name=Aa,draw,inner sep=2pt,circle] at (-2,-9) {};
\node[name=Ab,draw,inner sep=2pt,circle] at (0,-9) {};
\node[name=Ac,draw,inner sep=2pt,circle] at (2,-9) {};
\node[name=Ad,draw,inner sep=2pt,circle] at (0,-11) {};

\draw[arrows = {-Stealth[scale=1]},black] (A0)--(A1);
\draw[arrows = {-Stealth[scale=1]},black] (A0)--(A2);
\draw[arrows = {-Stealth[scale=1]},black] (A0)--(A3);
\draw[arrows = {-Stealth[scale=1]},black] (A1)--(A4);
\draw[arrows = {-Stealth[scale=1]},black] (A1)--(A8);
\draw[arrows = {-Stealth[scale=1]},black] (A2)--(A5);
\draw[arrows = {-Stealth[scale=1]},black] (A2)--(A9);
\draw[arrows = {-Stealth[scale=1]},black] (A3)--(A6);
\draw[arrows = {-Stealth[scale=1]},black] (A3)--(A7);
\draw[arrows = {-Stealth[scale=1]},black] (A4)--(A9);
\draw[arrows = {-Stealth[scale=1]},black] (A4)--(Ab);
\draw[arrows = {-Stealth[scale=1]},black] (A5)--(A7);
\draw[arrows = {-Stealth[scale=1]},black] (A5)--(Ac);
\draw[arrows = {-Stealth[scale=1]},black] (A6)--(A8);
\draw[arrows = {-Stealth[scale=1]},black] (A6)--(Aa);
\draw[arrows = {-Stealth[scale=1]},black] (A7)--(Aa);
\draw[arrows = {-Stealth[scale=1]},black] (A8)--(Ab);
\draw[arrows = {-Stealth[scale=1]},black] (A9)--(Ac);
\draw[arrows = {-Stealth[scale=1]},black] (Aa)--(Ad);
\draw[arrows = {-Stealth[scale=1]},black] (Ab)--(Ad);
\draw[arrows = {-Stealth[scale=1]},black] (Ac)--(Ad);

\end{tikzpicture}
  \centering
  \captionsetup{width=3\textwidth}
  \caption{A type-A Ordovician lattice obtained from the $A_3$ Tamari.}
  \label{fig:A2affineTamari}
  \end{minipage}
\end{tabular}
\end{figure}
  
This Ordovician lattice is not any type of Cambrian.
This lattice has 3 mutations but all mutations send to a lattice isomorphic to the $A_3$ Tamari.
\end{example}

We conjecture that the mutations of Ordovician lattices correspond to the mutations of quivers which are introduced by S.~Fomin and A.~Zelevinsky \text{\cite{MR1887642}}.
To state the conjecture, we first introduce the weighted quiver and its mutations.
We define a \emph{weighted quiver} as a quiver each of whose vertices $i$ is labelled by $d_i\in \mathbb{Z}_{\gneq 0}$.

Hereafter, we will refer to a weighted quiver simply as a quiver when no confusions can arise.

\begin{definition}[{\cite{MR1887642} (cf.~\cite[Lemma 2.3]{MR4265942})}]
  Assume that $Q$ has neither loops nor 2-cycles. For vertex $i$, define $Q'=\mu_i(Q)$ as follows:
  \begin{enumerate}
    \item For each pair of vertices $j,k$, we add $\{\text{number of edges} \ j\to i\}\times \{\text{number of edges} \ i\to k\} \times \frac{d_{jk}d_i}{d_{ji}d_{ik}}$ copies of the edge $j\to k$, where $d_{xy}$ is GCD of $d_x$, $d_y$. 
    \item Reverse all edges incident to $i$.
    \item Resolve each 2-cycle by canceling pairs of opposite directed edges. 
  \end{enumerate}
If $i$ is either a sink or a source, $\mu_i$ is a BGP-reflection.

\begin{construction} 
  For a weighted quiver $Q$, we consider the following condition;
  \begin{itemize}
    \item[($\star$)]  For each vertices $i,j$ in the same connected component of $Q$, $\frac{d_i}{d_j}$ is equal to one of $1,2,3,\frac{1}{2}$ and $\frac{1}{3}$.
  \end{itemize}
  
  We construct a quiver $\overrightarrow{B}(Q)$ with labelled arrows from a weighted quiver $Q$ satisfying ($\star$) as follows;
\begin{enumerate}
  \item $\overrightarrow{B}(Q)=Q$ if we ignore their labels and weights.
  \item Each edge $i\to j$ is labelled as $3,4$ or $6$: $3$ if $d_i=d_j$, $4$ if $d_i=2d_j$ or $2d_i=d_j$, and $6$ if $d_i=3d_j$ or $3d_i=d_j$. 
\end{enumerate}
\label{weights of arrows and vertices}
\end{construction}

A weighted quiver $Q$ satisfying ($\star$) is said to be \emph{of Dynkin type} if $\overrightarrow{B}(Q)$ is a Coxeter quiver of a finite-type Coxeter system. See \cref{fig:Dynkin type weighted quiver} for the complete list of connected weighted quiver of Dynkin type.
  \begin{figure}
  \centering
  \begin{tabular}{cc}
  \begin{minipage}{0.4\linewidth}
  \centering
    \begin{tikzpicture}[scale=1]
      \node[draw,shape=circle,inner sep=2pt,double] (A) at (-2,0) {};
\node[draw,shape=circle,inner sep=2pt] (B) at (0,0) {};
\node[draw,shape=circle,inner sep=2pt,double] (C) at (2,0) {};

\draw[->] (A)--(B);
\draw[->] (B)--(C);
\draw[->] (C) to[bend right] (A);
    \end{tikzpicture}
    \subcaption{Before mutation}
  \end{minipage}&
  \begin{minipage}{0.4\linewidth}
  \centering
  \begin{tikzpicture}[scale=1]
\node[draw,shape=circle,inner sep=2pt,double] (A) at (-2,0) {};
\node[draw,shape=circle,inner sep=2pt] (B) at (0,0) {};
\node[draw,shape=circle,inner sep=2pt,double] (C) at (2,0) {};

\draw[->] (A)--(B);
\draw[->] (B)--(C);
\draw[->] (C) to[bend right] (A);
\draw[->,blue] (A) to[bend right] (C);
\draw[->,blue] (A) to[bend right=45] (C);
\end{tikzpicture}
\subcaption{Operation(1)}
\end{minipage}\\
\begin{minipage}{0.4\linewidth}
\centering
    \begin{tikzpicture}[scale=1]
      \node[draw,shape=circle,inner sep=2pt,double] (A) at (-2,0) {};
\node[draw,shape=circle,inner sep=2pt] (B) at (0,0) {};
\node[draw,shape=circle,inner sep=2pt,double] (C) at (2,0) {};

\draw[<-,blue] (A)--(B);
\draw[<-,blue] (B)--(C);
\draw[->] (C) to[bend right] (A);
\draw[->] (A) to[bend right] (C);
\draw[->] (A) to[bend right=45] (C);
    \end{tikzpicture}
    \subcaption{Operation(2)}
  \end{minipage}&
  \begin{minipage}{0.4\linewidth}
  \centering
  \begin{tikzpicture}[scale=1]
\node[draw,shape=circle,inner sep=2pt,double] (A) at (-2,0) {};
\node[draw,shape=circle,inner sep=2pt] (B) at (0,0) {};
\node[draw,shape=circle,inner sep=2pt,double] (C) at (2,0) {};

\draw[<-] (A)--(B);
\draw[<-] (B)--(C);
\draw[->,dotted,cyan=30] (C) to[bend right] (A);
\draw[->,dotted,cyan=30] (A) to[bend right] (C);
\draw[->] (A) to[bend right=45] (C);
\end{tikzpicture}
\subcaption{Operation(3)}
\end{minipage}
  \end{tabular}
  \caption{An example of mutation of a quiver. The vertices of weight $1,2$ are represented by a circle and a double circle, respectively.}
  \label{fig:mutation of quiver}
\end{figure}
\end{definition}

\begin{figure}
  \centering
  \begin{tabular}{cc}
  \begin{minipage}[t]{0.4\linewidth}
  \centering
  \begin{tikzpicture}[scale=0.5]

\node[name=A1,draw,inner sep=2pt,circle] at (-4,0) {};
\node[name=A2,draw,inner sep=2pt,circle] at (-2,0) {};
\node[name=A3,draw,inner sep=2pt,circle] at (0,0) {};
\node[name=A4,inner sep=2pt,circle] at (2,0) {$\cdots$};
\node[name=A5,draw,inner sep=2pt,circle] at (4,0) {};

\draw[black] (A1)--(A2)--(A3)--(A4)--(A5);
\end{tikzpicture}
  \subcaption*{Type-A}
  \label{fig:type-A weighted quiver}
  \end{minipage}&
  
\begin{minipage}[t]{0.4\linewidth}
\centering
  \begin{tikzpicture}[scale=0.5]

\node[name=A1,draw,inner sep=2pt,circle] at (-4,0) {};
\node[name=A2,draw,inner sep=2pt,circle,double] at (-2,0) {};
\node[name=A3,draw,inner sep=2pt,circle,double] at (0,0) {};
\node[name=A4,inner sep=2pt,circle] at (2,0) {$\cdots$};
\node[name=A5,draw,inner sep=2pt,circle,double] at (4,0) {};

\draw[black] (A1)--(A2)--(A3)--(A4)--(A5);
\end{tikzpicture}
  \subcaption*{Type-B}
  \label{fig:type-B weighted quiver}
  \end{minipage}
  \\
  \begin{minipage}[t]{0.4\linewidth}
  \centering
    \begin{tikzpicture}[scale=0.5]

\node[name=A1,draw,inner sep=2pt,circle] at (-4,0) {};
\node[name=A2,draw,inner sep=2pt,circle] at (-2,0) {};
\node[name=A3,draw,inner sep=2pt,circle] at (-2,-2) {};
\node[name=A4,draw,inner sep=2pt,circle] at (0,0) {};
\node[name=A5,inner sep=2pt,circle] at (2,0) {$\cdots$};
\node[name=A6,draw,inner sep=2pt,circle] at (4,0) {};

\draw[black] (A1)--(A2)--(A4)--(A5)--(A6);
\draw[black] (A2)--(A3);
\end{tikzpicture}
\subcaption*{Type-D}
  \label{fig:type-D weighted quiver}
  \end{minipage}&
\begin{minipage}[t]{0.4\linewidth}
\centering
\begin{tikzpicture}[scale=0.5]

\node[name=A1,draw,inner sep=2pt,circle] at (-4,0) {};
\node[name=A2,draw,inner sep=2pt,circle] at (-2,0) {};
\node[name=A3,draw,inner sep=2pt,circle] at (0,0) {};
\node[name=A4,draw,inner sep=2pt,circle] at (0,-2) {};
\node[name=A5,inner sep=2pt,circle] at (2,0) {$\cdots$};
\node[name=A6,draw,inner sep=2pt,circle] at (4,0) {};

\draw[black] (A1)--(A2)--(A3)--(A5)--(A6);
\draw[black] (A3)--(A4);
\end{tikzpicture}
\subcaption*{Type-E}
  \label{fig:type-E weighted quiver}
\end{minipage}
\\
\begin{minipage}[t]{0.4\linewidth}
  \centering
  \begin{tikzpicture}[scale=0.5]

\node[name=A1,draw,inner sep=2pt,circle] at (-3,0) {};
\node[name=A2,draw,inner sep=2pt,circle] at (-1,0) {};
\node[name=A3,draw,inner sep=2pt,circle,double] at (1,0) {};
\node[name=A4,draw,inner sep=2pt,circle,double] at (3,0) {};

\draw[black] (A1)--(A2)--(A3)--(A4);
\end{tikzpicture}
\subcaption*{Type-F}
  \label{fig:type-F weighted quiver}
\end{minipage}&
\begin{minipage}[t]{0.4\linewidth}
  \centering
\begin{tikzpicture}[scale=0.5]

\node[name=A1,draw,inner sep=2pt,circle] at (-1,0) {};
\node[name=A2,draw,inner sep=2pt,circle,fill=black] at (1,0) {};

\draw[black] (A1)--(A2);
\end{tikzpicture}
\subcaption{Type-G}
  \label{fig:type-G weighted quiver}
\end{minipage}
  \end{tabular}
  \caption{The underlying undirected graph of Dynkin type weighted quiver. The vertices of weight $1,2,3$ are represented by a circle, a double circle, and a filled circle, respectively.}
  \label{fig:Dynkin type weighted quiver}
\end{figure}

\begin{remark}
  The following operations on any connected component of $Q$ do not change $\overrightarrow{B}(Q)$.
  \begin{itemize}
    \item Multiply all the original weights by a constant.
    \item Replace all the original weights by their reciprocals.
  \end{itemize}
  Note that these operations do not change $\frac{d_{jk}d_i}{d_{ji}d_{ik}}$ for $i,j,k$ in the same connected component.
  \label{change of vertex weight}
\end{remark}

\begin{conjecture}
\label{conj:structure of Ordovician lattice}
  The following hold:
  \begin{enumerate}
    \item Every finite-type Ordovician order is a lattice. In particular, every finite-type Cambrian lattice is mutable.
    \item Let $L$ be an finite-type Ordovician lattice, and let $a,b$ be its atoms. Then $[0,a\vee b]$ is a $(2,m)$-polygon for some $m\geq 2$. Moreover, $L$ is \emph{polygonal}, \text{i.e.}, it satisfies that $[x,y\vee z]$ is a polygon for any $x\prec y,z$ and $[x\wedge y, z]$ is a polygon for any $x,y\prec z$ \text{\cite{MR3645055}}.
    \item Let $L$ be an Dynkin type Ordovician lattice corresponding to $\overrightarrow{B}$, and let $i\in L$ be an atom. Take a weighted quiver $Q$ satisfying ($\star$) and $\overrightarrow{B}(Q)=\overrightarrow{B}$.  Then $\mu_i(L)$ corresponds to a quiver with labelled arrows isomorphic to $\overrightarrow{B}(\mu_{i}(Q))$. Here, the correspondence between an Dynkin type Ordovician lattice and a quiver with labelled arrows is given by \cref{General type Cambrian lattice and ideal polygon}.
  \end{enumerate}
\end{conjecture}

The following theorem provides supporting evidence for (3) of \cref{conj:structure of Ordovician lattice}.

\begin{theorem}
\label{general type of Ordovician lattice and ideal polygon}
  Let $Q$ be a weighted quiver of Dynkin type, $L=\Camb(\overrightarrow{B}(Q))$, $a_i \in L$ an atom, and $L'=\mu_{a_i}(L)$.
  Then, \cref{General type Cambrian lattice and ideal polygon} can be applied to $L'$, and the resulting quiver $\overrightarrow{B'}$ is isomorphic to $\overrightarrow{B}(\mu_{i}(Q))$, where $i$ is the vertex of $Q$ that corresponds to $a_i$.
  \begin{proof}
    If $i$ is a sink or a source, then the proposition follows from \cref{mutation of general type Cambrian lattice}.
    Thus, we may now assume that $i$ is neither a sink nor a source.

    Let $x,y \in L'$ be distinct atoms of $L'$.
    If $x,y \neq 0_L$, then $[0_L',x\vee_{L'} y]_{L'} = [0_L',x\vee_{L'} y]_{L}$.
    Let $a_j,a_k\in L$ be atoms of $L$ such that $x=a_ia_j$ or $x=a_i \vee_{L} a_j$, and such that $y=a_ia_k$ or $x=a_i \vee_{L} a_k$.
    Then, $[0_L',x\vee_{L'} y]_{L} \subset [0_L, a_i \vee_{L} a_j \vee_{L} a_k]_{L}$, and hence the edges between $x,y\in Q'$ is determined by $[0_L, a_i \vee_{L} a_j \vee_{L} a_k]_{L}$.    
    Since $[0_L, a_i \vee_{L} a_j \vee_{L} a_k]_{L}$ is a Cambrian lattice that the corresponding quiver is full subquiver of $\overrightarrow{B}$ with vertices $a_i,\ a_j,\ a_k$, it suffices to consider only those Cambrian lattices $L$ such that the corresponding quiver $\overrightarrow{B}$ has exactly three vertices.
    Such $L$ can only be the following types: $A_3$, $B_3$, $A_2\times A_1$, $B_2\times A_1$, $G_2\times A_1$, and $A_1\times A_1 \times A_1$.
    Here, a Cambrian lattice of type $X \times Y$ is defined as the product lattice of a Cambrian lattice of type $X$ and a Cambrian lattice of type $Y$.
    Therefore, the proposition follows from a straightforward computation in this case. 
    On the other hand, if $y=0_L$, let $a_j\in L$ be an atom of $L$ such that $x=a_ia_j$ or $x=a_i \vee_{L} a_j$.
    Using \cref{mutation of ideal}, we obtain $[0_L',0_L \vee_{L'} (a_i \vee_{L} a_j)]_{L'} = [0_L,a_i \vee_{L} a_j]_{L}$.
    Thus, a straightforward computation shows that the edge between $x,y\in Q'$ is reversed for $x,y \in Q$.
  \end{proof}
\end{theorem}

\begin{theorem}
\label{general type Ordovician lattice and the infinite case}
Let $L$ be a Dynkin type Cambrian lattice, $a_i \in L$ be an atom such that $i$ is neither a sink nor a source, and $L'=\mu_{a_i}(L)$. Then $L'$ cannot be expressed as a quotient of the weak order of any finite Coxeter groups. 
\begin{proof} 
We proceed by contradiction. Assume that such a map $f:S\to L'$ exists.
Without loss of generality, we may assume that no atom is mapped to $0$ by $f$. If not, \cref{quotient of weak order} allows us to replace $S$ with the parabolic subgroup $S'=\{a\in S \ | \ f(a)\neq 0\}$ and $f$ with $g$, so that the assumptions are satisfied.
For any two atoms $a,b$, we define $t(a,b)= \#([0,a\vee b])$.
It is straightforward to show that $t(a,b)=4$ if $ab=ba$.
Then, for any $3$ atoms $a,b,c$, at least one of $t(a,b),\ t(b,c),\ t(c,a)$ is equal to $4$.
By assumption, we may choose $j,k\in \overrightarrow{B}(Q)$ such that both an edge from $j$ to $i$ and from $i$ to $k$ exist.
Let $a'_i, a'_j, a'_k$ be atoms of $L'$.
We define $a'_i, a'_j, a'_k$ to correspond to $i,j,k \in \mu_i(\overrightarrow{B}(Q))$, respectively.
Then, by \cref{general type of Ordovician lattice and ideal polygon,weights of arrows and vertices,General type Cambrian lattice and ideal polygon}, $a'_i, a'_j, a'_k$ satisfy the following equation: $t(a'_i,a'_j),\ t(a'_j,a'_k),\ t(a'_k,a'_i)\geq 5$.
For distinct atoms $a,b$, $f$ satisfies $f(a)\neq f(b)$.
Hence, $t(a,b)\geq t(f(a),f(b))$.
However, if we take $a,b,c$ such that $f(a)=a'_i$, $f(b)=a'_j$, and $f(c)=a'_k$, this leads to a contradiction.

\end{proof}
\end{theorem}

A Cambrian lattice is a quotient of the weak order of a finite Coxeter group \cite{MR2258260}.
On the other hand, by our theorem, an Ordovician lattice is not, in general, a quotient of the weak order of a finite Coxeter group.

\begin{remark}[$\tilde{A}_2$ Affine Tamari lattice \cite{barkley2025affinetamarilattice}]
$\tilde{A}_2$ affine Tamari lattice coincides with \cref{fig:A2affineTamari}, where \cref{fig:A2affineTamari} represents an Ordovician lattice obtained from the $A_3$ Tamari lattice by a mutation.
\end{remark}

\begin{example}
  We present a mutation graph of the $A_3$ Tamari lattice. In this graph, the vertex set is all Ordocivian lattices obtained from the $A_3$ Tamari lattice by a finite sequence of mutations.
  We draw a directed edge from $L$ to $L'$ for each mutation $\mu$ such that $L'\cong \mu(L)$ exists.
  Since we ignore the labeling, this graph is not isomorphic to the underlying undirected graph of the Hasse quiver of the $A_3$ Tamari lattice.
  If mapping atoms of lattices to vertices of quivers by \cref{general type of Ordovician lattice and ideal polygon}, we obtain \cref{fig:mutation graph of A3 Ordovician lattice}. This coincides with the mutation graph of $A_3$ Coxeter quiver. 
  \begin{figure}[htbp]
\centering

\begin{tikzpicture}[scale=0.6]

\node[draw,shape=circle,inner sep=1pt] (A1) at (1,0) {};
\node[draw,shape=circle,inner sep=1pt] (B1) at (2,0) {};
\node[draw,shape=circle,inner sep=1pt] (C1) at (3,0) {};
\node[name=A3RR,draw,inner sep=2pt,circle,fit=(A1)(B1)(C1)] {};

\draw[->] (A1)--(B1);
\draw[->] (B1)--(C1);

\node[draw,shape=circle,inner sep=1pt] (A2) at (-3,3) {};
\node[draw,shape=circle,inner sep=1pt] (B2) at (-2,3) {};
\node[draw,shape=circle,inner sep=1pt] (C2) at (-1,3) {};
\node[name=A3LR,draw,inner sep=2pt,circle,fit=(A2)(B2)(C2)] {};

\draw[<-] (A2)--(B2);
\draw[->] (B2)--(C2);

\node[draw,shape=circle,inner sep=1pt] (A3) at (-3,-3) {};
\node[draw,shape=circle,inner sep=1pt] (B3) at (-2,-3) {};
\node[draw,shape=circle,inner sep=1pt] (C3) at (-1,-3) {};
\node[name=A3RL,draw,inner sep=2pt,circle,fit=(A3)(B3)(C3)] {};

\draw[->] (A3)--(B3);
\draw[<-] (B3)--(C3);

\node[draw,shape=circle,inner sep=1pt] (A4) at (7,0) {};
\node[draw,shape=circle,inner sep=1pt] (B4) at (8,0) {};
\node[draw,shape=circle,inner sep=1pt] (C4) at (9,0) {};
\node[name=A3Circle,draw,inner sep=2pt,circle,fit=(A4)(B4)(C4)] {};

\draw[->] (A4)--(B4);
\draw[->] (B4)--(C4);
\draw[->] (C4) to[bend right] (A4);

\draw[-{Stealth[scale=1]}] (A3RR) to[bend right] (A3LR);
\draw[-{Stealth}] (A3LR) to[bend right] (A3RR);
\draw[-{Stealth}] (A3LR) to[bend right=45] (A3RR);
\draw[-{Stealth[scale=1]}] (A3RR) to[bend right] (A3RL);
\draw[-{Stealth}] (A3RL) to[bend right] (A3RR);
\draw[-{Stealth}] (A3RL) to[bend right=45] (A3RR);
\draw[-{Stealth[scale=1]}] (A3LR) to[bend right] (A3RL);
\draw[-{Stealth[scale=1]}] (A3RL) to[bend right] (A3LR);
\draw[-{Stealth[scale=1]}] (A3RR) to[bend right] (A3Circle);
\draw[-{Stealth}] (A3Circle) to[bend right] (A3RR);
\draw[-{Stealth}] (A3Circle) to[bend right=45] (A3RR);
\draw[-{Stealth}] (A3Circle) to[bend right=60] (A3RR);

\end{tikzpicture}

\caption[figure iii]{A mutation graph of Ordovician lattices obtained from the $A_3$ Tamari lattice. If multiple mutations send to the same lattices, we duplicate edges.}
\label{fig:mutation graph of A3 Ordovician lattice}
\end{figure}
\end{example}

\begin{remark}
  The weak order of an infinite Coxeter group is, in general, not a lattice.
  However, affine Coxeter groups, a subclass of infinite Coxeter groups, are not lattices, but can be embedded to an affine Dyer lattice \cite{MR3943754,barkley2025affineextendedweakorder}.
  An affine Tamari lattice is a quotient lattice of an affine Dyer lattice \cite{barkley2025affinetamarilattice}. 
\end{remark}

\begin{example}
  We can obtain the following lattice structures and their opposites by a finite sequence of mutations from a $B_3$ Tamari lattice. These are all structures obtainable in this way.
  The rightmost lattice is a type-B Ordovician but not a type-B Cambrian. Others are type-B Cambrian.
  The rightmost lattice equals its opposite.
  Furthermore, the rightmost lattice is locally mutable and can be mutated into itself or a $B_3$ Tamari.
      \begin{figure}[htbp]
\centering
\begin{tabular}{ccc}
  \begin{minipage}[t]{0.3\linewidth}
  \begin{tikzpicture}[scale=0.3]

\node[name=min,draw,inner sep=2pt,circle] at (0,-18) {};
\node[name=a,draw,inner sep=2pt,circle] at (-3,-16) {};
\node[name=b,draw,inner sep=2pt,circle] at (0,-16) {};
\node[name=c,draw,inner sep=2pt,circle] at (3,-16) {};
\node[name=d,draw,inner sep=2pt,circle] at (5,-14) {};
\node[name=e,draw,inner sep=2pt,circle] at (-1,-12) {};
\node[name=f,draw,inner sep=2pt,circle] at (1,-12) {};
\node[name=g,draw,inner sep=2pt,circle] at (5,-12) {};
\node[name=h,draw,inner sep=2pt,circle] at (7,-12) {};
\node[name=i,draw,inner sep=2pt,circle] at (8,-10) {};
\node[name=j,draw,inner sep=2pt,circle] at (4,-9) {};
\node[name=k,draw,inner sep=2pt,circle] at (8,-8) {};
\node[name=l,draw,inner sep=2pt,circle] at (-1,-7) {};
\node[name=m,draw,inner sep=2pt,circle] at (5,-6) {};
\node[name=n,draw,inner sep=2pt,circle] at (7,-6) {};
\node[name=o,draw,inner sep=2pt,circle] at (5,-4) {};
\node[name=p,draw,inner sep=2pt,circle] at (-3,-2) {};
\node[name=q,draw,inner sep=2pt,circle] at (0,-2) {};
\node[name=r,draw,inner sep=2pt,circle] at (3,-2) {};
\node[name=max,draw,inner sep=2pt,circle] at (0,0) {};

\draw[black] (min)--(a);
\draw[black] (min)--(b);
\draw[black] (min)--(c);
\draw[black] (a)--(f);
\draw[black] (a)--(p);
\draw[black] (b)--(e);
\draw[black] (b)--(h);
\draw[black] (c)--(d);
\draw[black] (c)--(f);
\draw[black] (d)--(g);
\draw[black] (d)--(h);
\draw[black] (e)--(l);
\draw[black] (e)--(m);
\draw[black] (f)--(q);
\draw[black] (g)--(j);
\draw[black] (g)--(i);
\draw[black] (h)--(i);
\draw[black] (i)--(k);
\draw[black] (j)--(n);
\draw[black] (j)--(q);
\draw[black] (k)--(m);
\draw[black] (k)--(n);
\draw[black] (l)--(p);
\draw[black] (l)--(r);
\draw[black] (m)--(o);
\draw[black] (n)--(o);
\draw[black] (o)--(r);
\draw[black] (p)--(max);
\draw[black] (q)--(max);
\draw[black] (r)--(max);

\end{tikzpicture}
  \centering
  \captionsetup{width=0.8\textwidth}
  \caption{$B_3$ Tamari}
  \end{minipage} &
  
  \begin{minipage}[t]{0.3\linewidth}
  \begin{tikzpicture}[scale=0.3]

\node[name=min,draw,inner sep=2pt,circle] at (0,-18) {};
\node[name=a,draw,inner sep=2pt,circle] at (-3,-16) {};
\node[name=b,draw,inner sep=2pt,circle] at (0,-16) {};
\node[name=c,draw,inner sep=2pt,circle] at (3,-16) {};
\node[name=d,draw,inner sep=2pt,circle] at (0,-14) {};
\node[name=e,draw,inner sep=2pt,circle] at (3,-14) {};
\node[name=f,draw,inner sep=2pt,circle] at (3,-12) {};
\node[name=g,draw,inner sep=2pt,circle] at (6,-12) {};
\node[name=h,draw,inner sep=2pt,circle] at (0,-10) {};
\node[name=i,draw,inner sep=2pt,circle] at (3,-10) {};
\node[name=j,draw,inner sep=2pt,circle] at (-3,-8) {};
\node[name=k,draw,inner sep=2pt,circle] at (0,-8) {};
\node[name=l,draw,inner sep=2pt,circle] at (3,-8) {};
\node[name=m,draw,inner sep=2pt,circle] at (0,-6) {};
\node[name=n,draw,inner sep=2pt,circle] at (6,-6) {};
\node[name=o,draw,inner sep=2pt,circle] at (0,-4) {};
\node[name=p,draw,inner sep=2pt,circle] at (-3,-2) {};
\node[name=q,draw,inner sep=2pt,circle] at (0,-2) {};
\node[name=r,draw,inner sep=2pt,circle] at (3,-2) {};
\node[name=max,draw,inner sep=2pt,circle] at (0,0) {};

\draw[black] (min)--(a);
\draw[black] (min)--(b);
\draw[black] (min)--(c);
\draw[black] (a)--(j);
\draw[black] (a)--(q);
\draw[black] (b)--(d);
\draw[black] (b)--(e);
\draw[black] (c)--(g);
\draw[black] (c)--(q);
\draw[black] (d)--(f);
\draw[black] (d)--(h);
\draw[black] (e)--(f);
\draw[black] (e)--(g);
\draw[black] (f)--(i);
\draw[black] (g)--(n);
\draw[black] (h)--(j);
\draw[black] (h)--(k);
\draw[black] (i)--(k);
\draw[black] (i)--(l);
\draw[black] (j)--(p);
\draw[black] (k)--(m);
\draw[black] (l)--(m);
\draw[black] (l)--(n);
\draw[black] (m)--(o);
\draw[black] (n)--(r);
\draw[black] (o)--(p);
\draw[black] (o)--(r);
\draw[black] (p)--(max);
\draw[black] (q)--(max);
\draw[black] (r)--(max);

\end{tikzpicture}
  \centering
  \captionsetup{width=0.8\textwidth}
  \caption{A $B_3$ Cambrian but not a Tamari.}
  \end{minipage} &

  \begin{minipage}[t]{0.3\linewidth}
\begin{tikzpicture}[scale=0.3]

\node[name=min,draw,inner sep=2pt,circle] at (0,-18) {};
\node[name=a,draw,inner sep=2pt,circle] at (-3,-16) {};
\node[name=b,draw,inner sep=2pt,circle] at (0,-16) {};
\node[name=c,draw,inner sep=2pt,circle] at (3,-16) {};
\node[name=d,draw,inner sep=2pt,circle] at (4,-14) {};
\node[name=e,draw,inner sep=2pt,circle] at (0,-12) {};
\node[name=f,draw,inner sep=2pt,circle] at (5,-12) {};
\node[name=g,draw,inner sep=2pt,circle] at (-5,-10) {};
\node[name=h,draw,inner sep=2pt,circle] at (0,-10) {};
\node[name=i,draw,inner sep=2pt,circle] at (4,-10) {};
\node[name=j,draw,inner sep=2pt,circle] at (-5,-8) {};
\node[name=k,draw,inner sep=2pt,circle] at (0,-8) {};
\node[name=l,draw,inner sep=2pt,circle] at (4,-8) {};
\node[name=m,draw,inner sep=2pt,circle] at (0,-6) {};
\node[name=n,draw,inner sep=2pt,circle] at (5,-6) {};
\node[name=o,draw,inner sep=2pt,circle] at (4,-4) {};
\node[name=p,draw,inner sep=2pt,circle] at (-3,-2) {};
\node[name=q,draw,inner sep=2pt,circle] at (0,-2) {};
\node[name=r,draw,inner sep=2pt,circle] at (3,-2) {};
\node[name=max,draw,inner sep=2pt,circle] at (0,0) {};

\draw[black] (min)--(a);
\draw[black] (min)--(b);
\draw[black] (min)--(c);
\draw[black] (a)--(g);
\draw[black] (a)--(j);
\draw[black] (b)--(e);
\draw[black] (b)--(l);
\draw[black] (c)--(d);
\draw[black] (c)--(k);
\draw[black] (d)--(f);
\draw[black] (d)--(m);
\draw[black] (e)--(h);
\draw[black] (e)--(o);
\draw[black] (f)--(i);
\draw[black] (f)--(l);
\draw[black] (g)--(k);
\draw[black] (g)--(p);
\draw[black] (h)--(j);
\draw[black] (h)--(r);
\draw[black] (i)--(n);
\draw[black] (i)--(q);
\draw[black] (j)--(p);
\draw[black] (k)--(m);
\draw[black] (l)--(n);
\draw[black] (m)--(q);
\draw[black] (n)--(o);
\draw[black] (o)--(r);
\draw[black] (p)--(max);
\draw[black] (q)--(max);
\draw[black] (r)--(max);

\end{tikzpicture}
  \centering
  \captionsetup{width=0.8\textwidth}
  \caption{A type-B Ordovician but not a type-B Cambrian.}
  \end{minipage}
\end{tabular}
\end{figure}
 
\end{example}

\begin{example}
  We present a mutation graph of a $B_3$ Tamari lattice. In this graph, the vertex set is all type-B Ordocivian lattices obtained from a $B_3$ Tamari lattice by a finite sequence of mutations. We draw a directed edge from $L$ to $L'$ for each mutation $\mu$ such that $L'\cong \mu(L)$ exists.
  Since we ignore the labeling, this graph is not isomorphic to the underlying undirected graph of the Hasse quiver of $B_3$ Tamari lattice.
  If mapping atoms of lattices to vertices of quivers by \cref{general type of Ordovician lattice and ideal polygon}, we obtain \cref{fig:mutation graph of B3 Ordovician lattice}. This coincides with the mutation graph of $B_3$ Coxeter quiver. 
  \begin{figure}[htbp]
\centering
\begin{tikzpicture}[scale=0.4]

\node[draw,shape=circle,inner sep=2pt] (A1) at (4,8) {};
\node[draw,shape=circle,inner sep=2pt,double] (B1) at (6,8) {};
\node[draw,shape=circle,inner sep=2pt,double] (C1) at (8,8) {};
\node[name=B3RR,draw,inner sep=2pt,circle,fit=(A1)(B1)(C1)] {};

\draw[->] (A1)--(B1);
\draw[->] (B1)--(C1);

\node[draw,shape=circle,inner sep=2pt] (A2) at (-6,6) {};
\node[draw,shape=circle,inner sep=2pt,double] (B2) at (-4,6) {};
\node[draw,shape=circle,inner sep=2pt,double] (C2) at (-2,6) {};
\node[name=B3LR,draw,inner sep=2pt,circle,fit=(A2)(B2)(C2)] {};

\draw[<-] (A2)--(B2);
\draw[->] (B2)--(C2);

\node[draw,shape=circle,inner sep=2pt] (A3) at (-6,-6) {};
\node[draw,shape=circle,inner sep=2pt,double] (B3) at (-4,-6) {};
\node[draw,shape=circle,inner sep=2pt,double] (C3) at (-2,-6) {};
\node[name=B3RL,draw,inner sep=2pt,circle,fit=(A3)(B3)(C3)] {};

\draw[->] (A3)--(B3);
\draw[<-] (B3)--(C3);

\node[draw,shape=circle,inner sep=2pt] (A4) at (4,-8) {};
\node[draw,shape=circle,inner sep=2pt,double] (B4) at (6,-8) {};
\node[draw,shape=circle,inner sep=2pt,double] (C4) at (8,-8) {};
\node[name=B3LL,draw,inner sep=2pt,circle,fit=(A4)(B4)(C4)] {};

\draw[<-] (A4)--(B4);
\draw[<-] (B4)--(C4);

\node[draw,shape=circle,inner sep=2pt] (A5) at (12,0) {};
\node[draw,shape=circle,inner sep=2pt,double] (B5) at (14,0) {};
\node[draw,shape=circle,inner sep=2pt,double] (C5) at (16,0) {};
\node[name=B3Circle,draw,inner sep=2pt,circle,fit=(A5)(B5)(C5)] {};

\draw[->] (A5)--(B5);
\draw[->] (B5)--(C5);
\draw[->] (C5) to[bend right] (A5);

\draw[-{Stealth}] (B3RR) to[bend right] (B3LR);
\draw[-{Stealth}] (B3RR) to[bend right] (B3RL);
\draw[-{Stealth}] (B3LR) to[bend right] (B3RL);
\draw[-{Stealth}] (B3RL) to[bend right] (B3LL);
\draw[-{Stealth}] (B3LR) to[bend right] (B3LL);
\draw[-{Stealth}] (B3LR) to[bend right] (B3RR);
\draw[-{Stealth}] (B3RL) to[bend right] (B3RR);
\draw[-{Stealth}] (B3RL) to[bend right] (B3LR);
\draw[-{Stealth}] (B3LL) to[bend right] (B3RL);
\draw[-{Stealth}] (B3LL) to[bend right] (B3LR);
\draw[-{Stealth}] (B3RR) to[bend right] (B3Circle);
\draw[-{Stealth}] (B3LL) to[bend right] (B3Circle);
\draw[-{Stealth}] (B3Circle) to[bend right] (B3RR);
\draw[-{Stealth}] (B3Circle) to[bend right] (B3LL);
\draw[-{Stealth}] (B3Circle) to[looseness=2,loop right] (B3Circle);

\end{tikzpicture}
\caption{A mutation graph of Ordovician lattices obtained from a $B_3$ Tamari lattice.}
\label{fig:mutation graph of B3 Ordovician lattice}
\end{figure}
\end{example}

\begin{remark}[Other types of Ordovician lattices]
  We suggest that a \emph{type-D Ordovician lattice}, which is defined as obtained from a $D_{n}$ Cambrian lattice by a finite sequence of mutations, can also be obtained from $\tilde{A}_{n-1}$ affine Tamari lattice by a finite sequence of mutations.
  Indeed, $\tilde{A}_2$ affine Tamari lattice is obtained from the $A_3$ Tamari lattice by a mutation, and $A_3$ Coxeter group coincides with $D_3$ Coxeter group.
\end{remark}

\appendix

\section{A graphical description of flips of Cambrian lattices}
\crefalias{section}{appendix}
\label{sec:A graphical description of flips of Cambrian lattices}
In this section, we give a graphical proof of Type-A,B of Theorem.
This is author's original proof.

\begin{definition}[\cite{MR2258260}]
\label{type-A labelled polygon}
Fix $n$ as a positive integer.
We define the polygon $Q(\overrightarrow{B})$ to be a $(n+3)$-gon together with a labeling $0,1,\cdots,n+2$ of its vertices, where the labeling is given as follows:

Assign $1,2,\cdots,n+1$ to an upper or lower position.
For each $i\in\{1,2,\cdots,n+1\}$, if $i$ is assigned to the upper side, we denote it by $\overline{i}$.
If $i$ is assigned to the lower side, we denote it by $\underline{i}$.
We place $0$ at the leftmost position, $n+2$ at the rightmost position.
On the upper side (resp.~lower side) path, the elements assigned to the upper side (resp.~lower side) are arranged from left to right in increasing order.
In addition, we define a Coxeter quiver $\overrightarrow{B}$ to correspond to $Q(\overrightarrow{B})$.
We assign an orientation to $\overrightarrow{B}$ as follows:
For each $i\in\{1,2,\cdots,n-1\}$, $i\rightarrow i+1$ if $i+1$ is assigned to the upper side, $i\rightarrow i+1$ if $i+1$ is assigned to the lower side.
\begin{figure}[htbp]
  \centering
  \begin{tikzpicture}[scale=0.5]
  \node[name=0,draw,inner sep=2pt,circle] at (0,0) {0};
  \node[name=1,draw,inner sep=2pt,circle] at (2,2) {$\overline{1}$};
  \node[name=2,draw,inner sep=2pt,circle] at (4,3) {$\overline{2}$};
  \node[name=3,draw,inner sep=2pt,circle] at (6,-3) {$\underline{3}$};
  \node[name=4,draw,inner sep=2pt,circle] at (8,3) {$\overline{4}$};
  \node[name=5,draw,inner sep=2pt,circle] at (10,-3) {$\underline{5}$};
  \node[name=6,draw,inner sep=2pt,circle] at (12,2) {$\overline{6}$};
  \node[name=7,draw,inner sep=2pt,circle] at (14,0) {7};
  \node[name=q1,draw,inner sep=2pt,circle] at (3,0) {1};
  \node[name=q2,draw,inner sep=2pt,circle] at (5,0) {2};
  \node[name=q3,draw,inner sep=2pt,circle] at (7,0) {3};
  \node[name=q4,draw,inner sep=2pt,circle] at (9,0) {4};
  \node[name=q5,draw,inner sep=2pt,circle] at (11,0) {5};
  \node[name=qm1] at (4,0) {};
  \node[name=qm2] at (6,0) {};
  \node[name=qm3] at (8,0) {};
  \node[name=qm4] at (10,0) {};

  \draw (0)--(1);
  \draw (1)--(2);
  \draw (2)--(4);
  \draw (4)--(6);
  \draw (6)--(7);
  \draw (0)--(3);
  \draw (3)--(5);
  \draw (5)--(7);
  \draw[<-] (q1)--(q2);
  \draw[->] (q2)--(q3);
  \draw[<-] (q3)--(q4);
  \draw[->] (q4)--(q5);
  \draw[dotted] (2)--(qm1);
  \draw[dotted] (3)--(qm2);
  \draw[dotted] (4)--(qm3);
  \draw[dotted] (5)--(qm4);
\end{tikzpicture}
  \caption{A polygon and type-A Coxeter quiver correspond to a Cambrian lattice}
  \label{fig:type-A polygon and quiver}
\end{figure}
\end{definition}

We simply write $\overline{j}$ (resp.~$\underline{j}$) to indicate that $j$ is assigned to the upper side (resp.~lower side).
Moreover, we simply denote $B$ by the underlying undirected graph of $\overrightarrow{B}$.

\begin{definition}[\cite{MR2258260}]
For each $\sigma \in S_{n+1}$, we define
\[\sigma= \begin{pmatrix} 1 & \cdots & n+1 \\ \sigma(1) & \cdots & \sigma(n+1) \end{pmatrix},\]
and we construct $\eta$ which sends $\sigma$ to a triangulation of $Q(\overrightarrow{B})$;
\begin{enumerate}
  \item Let $C=\{0,n+2\}\cup \{j\in \{1,2,\cdots,n+1\}\ |\ \underline{j}\}$, and let $i=1$ be a counter.
  \item Draw a zigzag which through all elements in $C$ in increasing order.
  \item Remove $\sigma(i)$ from $C$ if $\sigma(i) \in C$; otherwise, add $\sigma(i)$ to $C$.
  \item Increase $i$ by $1$ if $i\neq n+1$; otherwise, terminate the procedure.
  \item Return to Step 2.
\end{enumerate}
$\eta$ induces a lattice structure on the weak order of $S_{n+1}$ in triangulations of $n+3$-gon, which is called \emph{(Type $A_n$) Cambrian lattice} \cite{MR2258260}.
\end{definition}

\begin{example}
We present an example. In $Q(\overrightarrow{B})$, $1,2,4,6$ are assigned to the upper side, and $3,5$ are assigned to the lower side. Let $\sigma= \begin{pmatrix} 1 & 2 & 3 & 4 & 5 & 6 \\ 1 & 4 & 3 & 6 & 5 & 2 \end{pmatrix} $.
Then $\eta(\sigma)$ is determined as \cref{fig:the map eta}:
  \begin{figure}[htbp]
    \centering
    \begin{tabular}{ccc}
    \begin{minipage}[t]{0.3\linewidth}
    \begin{tikzpicture}[scale=0.22]
  \node[name=0,draw,inner sep=2pt,circle] at (0,0) {0};
  \node[name=1,draw,inner sep=2pt,circle] at (2,3) {1};
  \node[name=2,draw,inner sep=2pt,circle] at (4,5) {2};
  \node[name=3,draw,inner sep=2pt,circle] at (6,-3) {3};
  \node[name=4,draw,inner sep=2pt,circle] at (8,5) {4};
  \node[name=5,draw,inner sep=2pt,circle] at (10,-3) {5};
  \node[name=6,draw,inner sep=2pt,circle] at (12,3) {6};
  \node[name=7,draw,inner sep=2pt,circle] at (14,0) {7};

  \draw (0)--(1);
  \draw (1)--(2);
  \draw (2)--(4);
  \draw (4)--(6);
  \draw (6)--(7);
  \draw (0)--(3);
  \draw (3)--(5);
  \draw (5)--(7);
  
\end{tikzpicture}
\end{minipage} &
  \begin{minipage}[t]{0.3\linewidth}
    \begin{tikzpicture}[scale=0.22]
  \node[name=0,draw,inner sep=2pt,circle] at (0,0) {0};
  \node[name=1,draw,inner sep=2pt,circle] at (2,3) {1};
  \node[name=2,draw,inner sep=2pt,circle] at (4,5) {2};
  \node[name=3,draw,inner sep=2pt,circle] at (6,-3) {3};
  \node[name=4,draw,inner sep=2pt,circle] at (8,5) {4};
  \node[name=5,draw,inner sep=2pt,circle] at (10,-3) {5};
  \node[name=6,draw,inner sep=2pt,circle] at (12,3) {6};
  \node[name=7,draw,inner sep=2pt,circle] at (14,0) {7};

  \draw (0)--(1);
  \draw (1)--(2);
  \draw (2)--(4);
  \draw (4)--(6);
  \draw (6)--(7);
  \draw (0)--(3);
  \draw (3)--(5);
  \draw (5)--(7);
  \draw[blue] (0)--(3)--(5)--(7);
  
\end{tikzpicture}
\end{minipage} &
\begin{minipage}[t]{0.3\linewidth}
    \begin{tikzpicture}[scale=0.22]
  \node[name=0,draw,inner sep=2pt,circle] at (0,0) {0};
  \node[name=1,draw,inner sep=2pt,circle] at (2,3) {1};
  \node[name=2,draw,inner sep=2pt,circle] at (4,5) {2};
  \node[name=3,draw,inner sep=2pt,circle] at (6,-3) {3};
  \node[name=4,draw,inner sep=2pt,circle] at (8,5) {4};
  \node[name=5,draw,inner sep=2pt,circle] at (10,-3) {5};
  \node[name=6,draw,inner sep=2pt,circle] at (12,3) {6};
  \node[name=7,draw,inner sep=2pt,circle] at (14,0) {7};

  \draw (0)--(1);
  \draw (1)--(2);
  \draw (2)--(4);
  \draw (4)--(6);
  \draw (6)--(7);
  \draw (0)--(3);
  \draw (3)--(5);
  \draw (5)--(7);
  \draw[cyan] (0)--(3);
  \draw[blue] (0)--(1)--(3)--(5)--(7);
  
\end{tikzpicture}
\end{minipage}\\ 
\begin{minipage}[t]{0.3\linewidth}
    \begin{tikzpicture}[scale=0.22]
  \node[name=0,draw,inner sep=2pt,circle] at (0,0) {0};
  \node[name=1,draw,inner sep=2pt,circle] at (2,3) {1};
  \node[name=2,draw,inner sep=2pt,circle] at (4,5) {2};
  \node[name=3,draw,inner sep=2pt,circle] at (6,-3) {3};
  \node[name=4,draw,inner sep=2pt,circle] at (8,5) {4};
  \node[name=5,draw,inner sep=2pt,circle] at (10,-3) {5};
  \node[name=6,draw,inner sep=2pt,circle] at (12,3) {6};
  \node[name=7,draw,inner sep=2pt,circle] at (14,0) {7};

  \draw (0)--(1);
  \draw (1)--(2);
  \draw (2)--(4);
  \draw (4)--(6);
  \draw (6)--(7);
  \draw (0)--(3);
  \draw (3)--(5);
  \draw (5)--(7);
  \draw[cyan] (0)--(3);
  \draw[cyan] (3)--(5);
  \draw[blue] (0)--(1)--(3)--(4)--(5)--(7);
  
\end{tikzpicture}
\end{minipage}&
\begin{minipage}[t]{0.3\linewidth}
    \begin{tikzpicture}[scale=0.22]
  \node[name=0,draw,inner sep=2pt,circle] at (0,0) {0};
  \node[name=1,draw,inner sep=2pt,circle] at (2,3) {1};
  \node[name=2,draw,inner sep=2pt,circle] at (4,5) {2};
  \node[name=3,draw,inner sep=2pt,circle] at (6,-3) {3};
  \node[name=4,draw,inner sep=2pt,circle] at (8,5) {4};
  \node[name=5,draw,inner sep=2pt,circle] at (10,-3) {5};
  \node[name=6,draw,inner sep=2pt,circle] at (12,3) {6};
  \node[name=7,draw,inner sep=2pt,circle] at (14,0) {7};

  \draw (0)--(1);
  \draw (1)--(2);
  \draw (2)--(4);
  \draw (4)--(6);
  \draw (6)--(7);
  \draw (0)--(3);
  \draw (3)--(5);
  \draw (5)--(7);
  \draw[cyan] (0)--(3);
  \draw[cyan] (3)--(5);
  \draw[cyan] (1)--(3)--(4);
  \draw[blue] (0)--(1)--(4)--(5)--(7);
  
\end{tikzpicture}
\end{minipage}&
\begin{minipage}[t]{0.3\linewidth}
    \begin{tikzpicture}[scale=0.22]
  \node[name=0,draw,inner sep=2pt,circle] at (0,0) {0};
  \node[name=1,draw,inner sep=2pt,circle] at (2,3) {1};
  \node[name=2,draw,inner sep=2pt,circle] at (4,5) {2};
  \node[name=3,draw,inner sep=2pt,circle] at (6,-3) {3};
  \node[name=4,draw,inner sep=2pt,circle] at (8,5) {4};
  \node[name=5,draw,inner sep=2pt,circle] at (10,-3) {5};
  \node[name=6,draw,inner sep=2pt,circle] at (12,3) {6};
  \node[name=7,draw,inner sep=2pt,circle] at (14,0) {7};

  \draw (0)--(1);
  \draw (1)--(2);
  \draw (2)--(4);
  \draw (4)--(6);
  \draw (6)--(7);
  \draw (0)--(3);
  \draw (3)--(5);
  \draw (5)--(7);
  \draw[cyan] (0)--(3);
  \draw[cyan] (3)--(5);
  \draw[cyan] (1)--(3)--(4);
  \draw[cyan] (5)--(7);
  \draw[blue] (0)--(1)--(4)--(5)--(6)--(7);
  
\end{tikzpicture}
\end{minipage}\\
\begin{minipage}[t]{0.3\linewidth}
    \begin{tikzpicture}[scale=0.22]
  \node[name=0,draw,inner sep=2pt,circle] at (0,0) {0};
  \node[name=1,draw,inner sep=2pt,circle] at (2,3) {1};
  \node[name=2,draw,inner sep=2pt,circle] at (4,5) {2};
  \node[name=3,draw,inner sep=2pt,circle] at (6,-3) {3};
  \node[name=4,draw,inner sep=2pt,circle] at (8,5) {4};
  \node[name=5,draw,inner sep=2pt,circle] at (10,-3) {5};
  \node[name=6,draw,inner sep=2pt,circle] at (12,3) {6};
  \node[name=7,draw,inner sep=2pt,circle] at (14,0) {7};

  \draw (0)--(1);
  \draw (1)--(2);
  \draw (2)--(4);
  \draw (4)--(6);
  \draw (6)--(7);
  \draw (0)--(3);
  \draw (3)--(5);
  \draw (5)--(7);
  \draw[cyan] (0)--(3);
  \draw[cyan] (3)--(5);
  \draw[cyan] (1)--(3)--(4);
  \draw[cyan] (5)--(7);
  \draw[cyan] (4)--(5)--(6);
  \draw[blue] (0)--(1)--(4)--(6)--(7);
  
\end{tikzpicture}
\end{minipage}&
\begin{minipage}[t]{0.3\linewidth}
    \begin{tikzpicture}[scale=0.22]
  \node[name=0,draw,inner sep=2pt,circle] at (0,0) {0};
  \node[name=1,draw,inner sep=2pt,circle] at (2,3) {1};
  \node[name=2,draw,inner sep=2pt,circle] at (4,5) {2};
  \node[name=3,draw,inner sep=2pt,circle] at (6,-3) {3};
  \node[name=4,draw,inner sep=2pt,circle] at (8,5) {4};
  \node[name=5,draw,inner sep=2pt,circle] at (10,-3) {5};
  \node[name=6,draw,inner sep=2pt,circle] at (12,3) {6};
  \node[name=7,draw,inner sep=2pt,circle] at (14,0) {7};

  \draw (0)--(1);
  \draw (1)--(2);
  \draw (2)--(4);
  \draw (4)--(6);
  \draw (6)--(7);
  \draw (0)--(3);
  \draw (3)--(5);
  \draw (5)--(7);
  \draw[cyan] (0)--(3);
  \draw[cyan] (3)--(5);
  \draw[cyan] (1)--(3)--(4);
  \draw[cyan] (5)--(7);
  \draw[cyan] (4)--(5)--(6);
  \draw[cyan] (1)--(4);
  \draw[blue] (0)--(1)--(2)--(4)--(6)--(7);
  
\end{tikzpicture}
\end{minipage}&
\begin{minipage}[t]{0.3\linewidth}
    \begin{tikzpicture}[scale=0.22]
  \node[name=0,draw,inner sep=2pt,circle] at (0,0) {0};
  \node[name=1,draw,inner sep=2pt,circle] at (2,3) {1};
  \node[name=2,draw,inner sep=2pt,circle] at (4,5) {2};
  \node[name=3,draw,inner sep=2pt,circle] at (6,-3) {3};
  \node[name=4,draw,inner sep=2pt,circle] at (8,5) {4};
  \node[name=5,draw,inner sep=2pt,circle] at (10,-3) {5};
  \node[name=6,draw,inner sep=2pt,circle] at (12,3) {6};
  \node[name=7,draw,inner sep=2pt,circle] at (14,0) {7};

  \draw (0)--(1);
  \draw (1)--(2);
  \draw (2)--(4);
  \draw (4)--(6);
  \draw (6)--(7);
  \draw (0)--(3);
  \draw (3)--(5);
  \draw (5)--(7);
  \draw[cyan] (0)--(3);
  \draw[cyan] (3)--(5);
  \draw[cyan] (1)--(3)--(4);
  \draw[cyan] (5)--(7);
  \draw[cyan] (4)--(5)--(6);
  \draw[cyan] (1)--(4);
  \draw[cyan] (0)--(1)--(2)--(4)--(6)--(7);
  
\end{tikzpicture}
\end{minipage}
\end{tabular}

    \caption{The map from a symmetric group to triangulations of a polygon}
    \label{fig:the map eta}
  \end{figure}
\end{example}

In a Cambrian lattice, $x\prec y$ or $x\succ y$ if and only if $y$ is obtained from $x$ by replacing one diagonal line.
Hence we can represent a cover relation $x\prec y$ in a Cambrian lattice by 2 crossing diagonals, in particular by a quadrilateral with these two diagonals.

\begin{lemma}
  Let $0\leq a\leq b\leq c\leq d\leq n+2$. For $x\prec y$ presented by a quadrilateral $abcd$, the following hold:
  \begin{enumerate}
    \item If $b,c$ are assigned to the opposite sides, then $bc,ad$ are diagonals; otherwise, $ac,bd$ are diagonals.
    \item If $\overline{b}$, then the diagonal with endpoint $b$ is smaller than the other diagonal; otherwise, diagonal with endpoint $a$ is smaller than the other diagonal.
  \end{enumerate}
  \begin{proof}
    (1) follows from \cref{type-A labelled polygon}.
    (2) We define $T,U\subset$ $\{1$, $2$, $\cdots$, $n+1\}$ as follows:
    \[T=\{i \in \{1,2,\cdots,n+1\} \setminus \{a,b,c,d\}\ | \ \underline{i}\} \sqcup \{i\in \{a,d\}\ | \ \overline{i}\},\]
    we wrtie $T=\{t_1,t_2,\cdots,t_m\}$, where $t_1\leq t_2\leq \cdots t_m$.
    \[U=\{i \in \{1,2,\cdots,n+1\} \setminus \{a,b,c,d\}\ | \ \underline{i}\} \sqcup \{i\in \{a,d\}\ | \ \overline{i}\},\]
    we write $U=\{u_1,u_2,\cdots,u_l\}$, where $u_1\leq u_2\leq \cdots u_l$.
    We define $\sigma_1,\sigma_2 \in S_{n+1}$ as follows:
    \begin{align*}
      \sigma_1=&(t_1, t_2, \cdots, t_m, b, c, u_1, u_2, \cdots, u_l),\\
      \sigma_2=&(t_1, t_2, \cdots, t_m, c, b, u_1, u_2, \cdots, u_l)
    \end{align*}
    Then the cover relation between $\eta(\sigma_1),\eta(\sigma_2)$ corresponds to the quadrilateral $abcd$.
  \end{proof} 
\label{diagonals and ordering}
\end{lemma}

Using \cref{kernel of eta} \cite{MR2258260}, we can determine which pairs of elements are mapped to the same element by $\eta$.
\begin{definition}[\cite{MR2258260}]
Let $\sigma \in S_{n+1}$. We define $i\lneq j\lneq k$ to be a \emph{$\overline{2}31$-pattern} if $\overline{j}$ and $\sigma^{-1}(j)\leq \sigma^{-1}(k)\leq \sigma^{-1}(i)$.
A \emph{$31\underline{2}$-pattern} is defined in the same manner.
\end{definition}

\begin{definition}[\cite{MR2258260}]
  Let $i\lneq j\lneq k$ be a $\overline{2}31$-pattern of $\sigma \in S_{n+1}$. We define a \emph{$\overline{2}31 \rightarrow \overline{2}13$-move} to be an operation that exchanges $i$ and $k$.
  A \emph{$31\underline{2}\rightarrow 13\underline{2}$-move} is defined similarly.
\end{definition}

\begin{remark}[\cite{MR2258260}]
\label{kernel of eta}
  The value $\eta(\sigma)$ is invariant under a $\overline{2}31 \rightarrow \overline{2}13$-move, a $31\underline{2}\rightarrow 13\underline{2}$-move applied to $\sigma$.
  Conversely, if $\eta(\sigma_{1})=\eta(\sigma_{2})$, then $\sigma_{1}$ and $\sigma_{2}$ can be transformed into each other by a finite sequence of $\overline{2}31 \rightarrow \overline{2}13$-moves and $31\underline{2}\rightarrow 13\underline{2}$-moves and their inverses.
\end{remark}

In particular, $\eta(\sigma)$ remains unchanged regardless of whether $1,n+1$ are assigned to the upper or lower side.
Hence, a Cambrian lattice is determined from $\overrightarrow{B}$.
We denote it by $\Camb(\overrightarrow{B})$.

\begin{proposition}[\cref{dual order of Cambrian lattice in section 7}]
  The lattice structure $\Camb(\overrightarrow{B'})$ is the dual order of $\Camb(\overrightarrow{B})$, where $\overrightarrow{B'}$ is obtained from $\overrightarrow{B}$ by reversing the orientation of every edge.
  \begin{proof}
    $Q(\overrightarrow{B'})$ is obtained from $Q(\overrightarrow{B})$ by swapping all numbers between the upper and lower sides. Then the result follows from \cref{diagonals and ordering}.
    A similar argument applies to type-B.
  \end{proof}
  \label{dual order of Cambrian lattice in section A}
\end{proposition}

For type-A, the original proof of \cref{mutation of general type Cambrian lattice}(2,3) is as follows:
\begin{proof} 
(2) If necessary, we assume that $1$ (resp.~$n+1$) and $2$ (resp.~$n$) lie on opposite sides.
Since $i\in \overrightarrow{B}$ is a sink or a source, $Q(\mu_{i}(\overrightarrow{B}))$ is obtained from $Q(\overrightarrow{B})$ by swapping $i$ and $i+1$.

For each cover relation $x\prec y$ in $\Camb(\overrightarrow{B})$, let $abcd$ be the corresponding quadrilateral.
In $\Camb (\mu_{i}(\overrightarrow{B}))$, $x\prec y$ corresponds to the quadrilateral $a'b'c'd'$ obtained from $abcd$ by replacing each occurrence $i+1$ with $i$ and each occurrence $i$ with $i+1$.

By \cref{diagonals and ordering}, which of the two diagonals corresponding to $a'b'c'd'$ is larger than the other diagonal depends only the sides and the order relations of $a',b',c',d'$. 
Hence, if $abcd$ and $a'b'c'd'$ have exactly the same sides and order relations,
then the ordering of the two diagonals (i.e., which diagonal is larger) remains unchanged.
From this, a straightforward computation shows the following: if the ordering of the two diagonals are changed, then $\{a,b,c,d\} \supset \{i,i+1\}$.
We may now assume $\{a,b,c,d\} \supset \{i,i+1\}$.
We prove this using \cref{diagonals and ordering}.
Our assumption implies that $i$ and $i+1$ lie on opposite sides.
Without loss of generality, we set $a\lneq c, b\lneq d, a\lneq b$.

If $a=i$, then $b=i+1$, and hence the ordering is unchanged.
If $b=i$, then $d=i+1$, and hence the ordering is reversed.
Otherwise $\{c,d\}=\{i,i+1\}$, and hence the ordering is unchanged.
Therefore the ordering is changed if and only if one of the diagonal is $(i\edge i+1)$.

Whether an element $\eta(\sigma)$ contains a diagonal $(i\edge i+1)$ or not corresponds to whether $\sigma^{-1}(i)\le \sigma^{-1}(i+1)$ (or $\sigma^{-1}(i)\ge \sigma^{-1}(i+1)$), and the the correspondence depends on the side of $i$.
Whether $\sigma^{-1}(i)\le \sigma^{-1}(i+1)$ (or $\sigma^{-1}(i)\ge \sigma^{-1}(i+1)$) also corresponds to whether $\sigma \geq (i,i+1)$ or not.
Hence, $\Camb(\mu_{i}(\overrightarrow{B}))$ is a mutation of $\Camb(\overrightarrow{B})$ on the atom $\eta((i,i+1))$.

(3) follows from (2) and \cref{direction of tree}.
\end{proof}

\begin{proposition}[\cref{Flip-Flop of Cambrian lattices in section 7}]
  The flip in (2) can also be described as a Flip-Flop \cite{ladkani2007universalderivedequivalencesposets}.
  In particular, $A=\partial A$ or $B=\partial B$ in this case.
  \begin{proof}
    In this case, exactly one of $A$ and $B$ corresponds to the set of triangulations that contain $(i\edge i+1)$.
  \end{proof}
  \label{Flip-Flop of Cambrian lattices in section A}
\end{proposition}

Hereafter, We apply a similar argument for type-B.

\begin{definition}
  Let $T=\{-n, -(n-1), \cdots, -2, -1, 1, 2, \cdots, n-1, n\}$. We define a type-B Coxeter group by $W_n=\{\sigma:T\to T\ |\ \text{bijective}$, $\sigma(-i) = -\sigma(i)\}$.
\end{definition}

We can take its generators by
$s_1=(-1,1)$, $s_2=(-2,-1)(1,2)$, $s_3=(-3,-2)(2,3)$, $\cdots$, $s_n=(-n,-(n-1))(n-1,n)$.
From now on, we assume that $S=\{s_1,\cdots,s_n\}$.

\begin{definition}[\cite{MR2258260}]
Fix a positive integer $n$.
We define the polygon $Q(\overrightarrow{B})$ to be a $(2n+2)$-gon together with a labeling 
$-(n+1)$, $-n$, $\cdots$, $-2$, $-1$, $1$, $2$, $\cdots$, $n$, $n+1$ of its vertices, where the labeling is given as follows:

Assign $-n,-(n-1),\cdots,-2,-1,1,2,\cdots,n-1,n$ to an upper or lower position such that $i$ and $-i$ lie on opposite sides.
For each $i\in\{-n,-(n-1),\cdots,-2,-1,1,2,\cdots,n-1,n\}$, if $i$ is assigned to the upper side, we denote it by $\overline{i}$.
If $i$ is assigned to the lower side, we denote it by $\underline{i}$.

We place $-(n+1)$ at the leftmost position, $n+1$ at the rightmost position.
On the upper side (resp.~lower side) path, the elements assigned to the upper side (resp.~lower side) are arranged from left to right in increasing order.

In addition, we define a type-B Coxeter quiver $\overrightarrow{B}$ to correspond to $Q(\overrightarrow{B})$.
We assign an orientation to $\overrightarrow{B}$ as follows:
For each $i\in\{1,2,\cdots,n-2,n-1\}$, $i\rightarrow i+1$ if $i+1$ is assigned to the upper side, $i\rightarrow i+1$ if $i+1$ is assigned to the lower side.
\begin{figure}[htbp]
  \centering
\begin{tikzpicture}[scale=0.5]
  \node[name=-4,draw,inner sep=2pt,circle] at (0,0) {$-4$};
  \node[name=-3,draw,inner sep=2pt,circle] at (2,2) {$\overline{-3}$};
  \node[name=-2,draw,inner sep=2pt,circle] at (4,3) {$\overline{-2}$};
  \node[name=-1,draw,inner sep=2pt,circle] at (6,-3) {$\underline{-1}$};
  \node[name=1,draw,inner sep=2pt,circle] at (8,3) {$\overline{1}$};
  \node[name=2,draw,inner sep=2pt,circle] at (10,-3) {$\underline{2}$};
  \node[name=3,draw,inner sep=2pt,circle] at (12,-2) {$\underline{3}$};
  \node[name=4,draw,inner sep=2pt,circle] at (14,0) {$4$};
  \node[name=q1,draw,inner sep=2pt,circle] at (7,0) {$1$};
  \node[name=q2,draw,inner sep=2pt,circle] at (9,0) {$2$};
  \node[name=q3,draw,inner sep=2pt,circle] at (11,0) {$3$};
  \node[name=qm1] at (8,0) {};
  \node[name=qm2] at (10,0) {};

  \draw (-4)--(-3);
  \draw (-3)--(-2);
  \draw (-2)--(1);
  \draw (1)--(4);
  \draw (-4)--(-1);
  \draw (-1)--(2);
  \draw (2)--(3);
  \draw (3)--(4);
  \draw[<-] (q1)--(q2) node[midway, below] {4};
  \draw[->] (q2)--(q3);
  \draw[dotted] (1)--(qm1);
  \draw[dotted] (2)--(qm2);
\end{tikzpicture}
  \caption{A type-B Coxeter quiver $Q(\overrightarrow{B})$ and a polygon correspond to a type-B Cambrian lattice}
  \label{fig:type-B polygon and quiver}
\end{figure}
\end{definition}

As in the case of type-A, $\eta_{B}(\sigma)$ remains unchanged regardless of whether $-n,n$ are assigned to the upper or lower side \cite{MR2258260}.
Hence, a type-B Cambrian lattice is determined from $\overrightarrow{B}$.
We denote it by $\Camb(\overrightarrow{B})$.

\begin{definition}[\cite{MR2258260}]
  For each $\sigma \in W_{n}$, we define 
  \[\sigma= \begin{pmatrix} -n & \cdots & -1 & 1 & \cdots & n  \\ \sigma(-n) & \cdots & \sigma(-1) & \sigma(1) & \cdots & \sigma(n) \end{pmatrix},\]
  and construct $\eta_{B}$ which sends $\sigma$ to a point-symmetric triangulation of $Q(\overrightarrow{B})$;

\begin{enumerate}
  \item Let $C=\{-(n+1),n+1\}$ $\cup$ $\{j\in \{-n,-(n-1),\cdots,-2,-1$, $1,2,\cdots,n-1,n\}\ |\ \underline{j}\}$, and let $i=-n$ be a counter.
  \item Draw a zigzag which through all elements in $C$ in increasing order.
  \item Remove $\sigma(i)$ from $C$ if $\sigma(i) \in C$; otherwise, add $\sigma(i)$ to $C$.
  \item Increase $i$ by $1$ if $i\neq -1,n+1$. If $i=-1$, change it to $1$. If $i=n+1$, we terminate the procedure. These cases exhaust all possibilities.
  \item Return to Step 2.
\end{enumerate}

$\eta_{B}$ induce a lattice structure on the weak order of $W_{n}$ in point-symmetric triangulations of $2n+2$-gon, which is called \emph{(Type $B_n$) Cambrian lattice} \cite{MR2258260}.
No atoms are send to $0$ by $\eta_{B}$. 

\end{definition}
As in the case of type-A Cambrian lattices, no atom is mapped to $0$ by $\eta_{B}$ \cite{MR2258260}.

We call the diagonals $(-i\edge i)$ for some $i\in\{1,2,\cdots,n+1\}$ a \emph{long edge}.
For any $j\neq i,-i$, we call the pair of diagonals $(i\edge j),(-i\edge -j)$ a \emph{short edge}.

In a type-B Cambrian lattice, $x\prec y$ or $x\succ y$ if and only if $y$ is obtained from $x$ by replacing one long edge (resp.~short edge) by another long edge (resp.~short edge).
Hence we can represent a cover relation $x\prec y$ in a type-B Cambrian lattice by 2 crossing long edges(which construct one quadrilateral) or 2 crossing short edges(which constuct two quadrilaterals that are point-symmetric to each other).

For type-B, the original proof of \cref{mutation of general type Cambrian lattice}(2,3) is as follows:
\begin{proof} 
(2) Take any two elements $x,y$ such that $x\prec y$ or $x\succ y$.
The method in \cref{diagonals and ordering} also applies to check the ordering of $x,y$.
When there are two quadrilaterals corresponding to $\{x,y\}$, we may choose a quadrilateral arbitrary.

Case1: a flip by vertex $1$

Between $Q(\overrightarrow{B}),Q(\mu_{i}(\overrightarrow{B}))$, the vertices $-1,1$ are swapped.
Like type-A, the ordering of $x,y$ is reversed if and only if $(-1\edge 1)$ is one of the diagonals.

Whether an element $\eta_{B}(\sigma)$ contains diagonal $(-1\edge 1)$ or not is corresponds to whether $\sigma^{-1}(-1)\le \sigma^{-1}(1)$ (or $\sigma^{-1}(-1)\ge \sigma^{-1}(1)$), and the the correspondence depends on the side of $1$.
Whether $\sigma^{-1}(-1)\le \sigma^{-1}(1)$ (or $\sigma^{-1}(-1)\ge \sigma^{-1}(1)$) also corresponds to whether $\sigma \geq (-1,1)$ or not.
Hence, $\Camb(\mu_{i}(\overrightarrow{B}))$ is a mutation of $\Camb(\overrightarrow{B})$ on the atom $\eta((-1,1))$ in this case.

Case2: a flip by a vertex $i\geq 2$ 

If necessary, we assume that $-n(resp.~n)$ and $-n+1(resp.~n-1)$ lie on opposite sides. 
Since $i\in \overrightarrow{B}$ is a sink or a source, $Q(\mu_{i}(\overrightarrow{B}))$ is obtained from $Q(\overrightarrow{B})$ by swapping $-i+1$ with $-i$, and $i-1$ with $i$.

As in the case of type-A, the ordering of $x,y$ is reversed if and only if $(-(i+1)\edge -i)$ (or equivalently, $(i\edge i+1)$) is one of the diagonals.

Whether an element $\eta_{B}(\sigma)$ contains diagonal $(i-1\edge i)$ or not is corresponds to whether $\sigma^{-1}(i-1)\le \sigma^{-1}(i)$ (or $\sigma^{-1}(i-1)\ge \sigma^{-1}(i)$), and the the correspondence depends on the side of $i-1$.
Whether $\sigma^{-1}(i-1)\le \sigma^{-1}(i)$ (or $\sigma^{-1}(i-1)\ge \sigma^{-1}(i)$) also corresponds to whether $\sigma \geq (-(i+1),-i)(i,i+1)$ or not.
Therefore, part (2) is proved.

(3) follows from (2) and \cref{direction of tree}.
\end{proof}

\begin{conjecture}
  Let $L$ be a type-A or type-B Ordovician order with label induced by a type-A or type-B Cambrian lattice, and let $x\in L$.
  Assume that $L$ is mutable.
  Then, $u(x)$, defined in \cref{the map u}, is obtained from $x$ by a rotation that sends each vertex to the next vertex in the clockwise direction.
\end{conjecture}

\section{Lattice structures on the $A_3$ associahedron}
\crefalias{section}{appendix}
\label{sec:Lattice structures on the A_3 associahedron}
In this section, we give a complete list of all lattice structures on $A_3$ associahedron.
In particular, we give an example that is not mutable but N-regular.

$A_3$ associahedron is obtained by gluing two copies of \cref{fig:part_of_associahedron} along the parts indicated by the blue dots and lines.
\begin{figure}[htbp]
  \centering
  % [inline block 0: 2 envs, 77786 chars in 2 pieces, piece 1 here, a bare % at each other -> data_tex | \begin{tikzpicture}[scale=0.1] ...]

  \caption{}
  \label{fig:part_of_associahedron}
\end{figure}

Using \cref{cycle in lattice}, we obtain the following lemma:

\begin{lemma}
  Let $K,L$ be poset structures on \cref{fig:part_of_associahedron} used to construct $A_3$ associahedron, so that a lattice structure on $A_3$ associahedron is obtained by gluing $K$ and $L$ along the parts indicated by the blue dots and lines.
  Then, every possible $K (resp.~L)$ is of one of the 11 types shown in \cref{fig:possible poset structure} and their opposites:
\begin{figure}[htbp]
%
\caption{}
\label{fig:possible poset structure}
\end{figure}

\end{lemma}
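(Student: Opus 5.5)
The plan is a finite case analysis driven entirely by \cref{cycle in lattice}. Any lattice structure on the $A_3$ associahedron restricts to $K$ and $L$ on the two copies of \cref{fig:part_of_associahedron}. The underlying undirected graph of the Hasse quiver is fixed, so a structure on one copy is determined by an orientation of its edges. The task is therefore to classify the orientations that can occur.

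\textbf{Local constraint.} Every face of the half-associahedron is a square or a pentagon, and its boundary is a covering path in the ambient lattice.
\begin{itemize}
\item By \cref{cycle in lattice}(1), each square must be oriented as a $(2,2)$-polygon. Such an orientation is determined by the choice of its bottom vertex, so there are $4$ choices.
\item By \cref{cycle in lattice}(2), each pentagon must be oriented as a $(2,3)$-polygon. Such an orientation is determined by its bottom vertex together with which neighbouring side carries the chain of length $2$, so there are $5\cdot 2=10$ choices.
\end{itemize}
Adjacent faces share an edge, so their local orientations must agree on that edge. I would also impose acyclicity and the absence of non-covering shortcuts, though the face conditions should already force most of this.

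\textbf{Enumeration.} I will enumerate as a propagation.
\begin{enumerate}
\item Fix a central face of the half-associahedron, preferably a square, since it touches the most faces and has the fewest options.
\item Run through its admissible orientations. Up to the symmetry of the half-associahedron that preserves the blue gluing boundary, and up to order reversal, only one or two of these should be essentially different.
\item Extend face by face to the adjacent faces. Each shared edge cuts the $4$ or $10$ possibilities of the next face down to one or two, so the search tree stays small.
\item At each leaf, record the resulting orientation of the copy.
\item Identify leaves that differ by a symmetry of the figure fixing the gluing data, or by passing to the opposite order. The opposites are listed separately in the statement, so the classification is taken up to reversal.
\end{enumerate}
The claim is that exactly $11$ classes survive, matching \cref{fig:possible poset structure}.

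\textbf{Obstacles.} The main difficulty is bookkeeping rather than any conceptual step. I have to make sure that:
\begin{itemize}
\item no consistent combination is missed;
\item symmetric duplicates are merged correctly, and only using symmetries compatible with the blue boundary, since that boundary is needed later for gluing.
\end{itemize}
To control this, I would encode each candidate by the orientation of the boundary edges plus the bottom vertex of each face, and check that it is a poset by verifying that no directed cycle occurs. If the count does not come out to $11$, the first thing to check is whether some surviving orientation admits no completion, or yields a non-lattice that is only detected globally. In that case I would apply \cref{cycle in lattice}(3) to longer cycles with at most two descending covers.
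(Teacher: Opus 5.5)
Your proposal is correct and follows the same route as the paper, which gets the list directly from \cref{cycle in lattice} by this kind of finite check. One correction to your setup: the piece is three pentagons around a common vertex, and the squares of the associahedron only appear after gluing. So the natural parametrization is by the orientations of the three edges at that vertex. The pentagon conditions alone then give $124$ orientations, which fall into exactly $11$ classes under the dihedral symmetry of the piece together with order reversal, so the fallback to longer cycles is never needed.
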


\begin{proposition}
  There are exactly $8$ types of lattice structures on $A_3$ associahedron.
  \begin{enumerate}
    \item Three $A_3$ Cambrian lattices
    \item $\tilde{A}_2$ Affine Tamari lattice
    \item \cref{fig:BadCase1} and its opposite
    \item \cref{fig:BadCase2} and its opposite
  \end{enumerate}
  \begin{proof}
    From now on, we denote by $X--Y$ the result of gluing $X$ and $Y$.
    Moreover, we denote by $X'$ the opposite of $X$.
    Only the following candidates are consistent with \cref{cycle in lattice}:
    $A1--A1'$, $B1--B1'$, $A2--A2'$, $A2--B3'$, $B3--B3'$, $B2--C4'$, $B2--D1$, $C1--C1$, $C2--C2$, $C2--C2'$ \text{ and their opposites}.
    Among these candidates, $A1--A1'$, $B1--B1'$, $C1--C1$, and $C1'--C1'$ are Ordovician lattices obtained from the $A_3$ Tamari lattice.
    The cases $A2--A2'$, $B3--B3'$, $C2--C2'$, $C2--C2$, and $B2--D1$ are not lattices, since there is no least upper bound of black elements that are shown in \cref{Gluings that do not form a lattice}.
    Also their opposites are not lattices.
      \begin{figure}[htbp]
\centering
\begin{tabular}{ccccc}
  \begin{minipage}[t]{0.2\linewidth}
  \centering
  \begin{tikzpicture}[scale=0.3]
\node[name=min,draw,inner sep=2pt,circle] at (0,-10) {};
\node[name=a,draw,inner sep=2pt,circle,fill=black] at (-2,-8) {};
\node[name=b,draw,inner sep=2pt,circle] at (0,-8) {};
\node[name=c,draw,inner sep=2pt,circle] at (2,-8) {};
\node[name=d,draw,inner sep=2pt,circle] at (-2,-6) {};
\node[name=e,draw,inner sep=2pt,circle] at (2,-6) {};
\node[name=f,draw,inner sep=2pt,circle,fill=black] at (4,-6) {};
\node[name=g,draw,inner sep=2pt,circle] at (-2,-4) {};
\node[name=h,draw,inner sep=2pt,circle] at (2,-4) {};
\node[name=i,draw,inner sep=2pt,circle] at (4,-4) {};
\node[name=j,draw,inner sep=2pt,circle] at (-2,-2) {};
\node[name=k,draw,inner sep=2pt,circle] at (0,-2) {};
\node[name=l,draw,inner sep=2pt,circle] at (2,-2) {};
\node[name=max,draw,inner sep=2pt,circle] at (0,0) {};

\draw[black] (min)--(a);
\draw[black] (min)--(b);
\draw[black] (min)--(c);
\draw[black] (a)--(d);
\draw[black] (a)--(e);
\draw[black] (b)--(g);
\draw[black] (b)--(h);
\draw[black] (c)--(f);
\draw[black] (c)--(i);
\draw[black] (d)--(g);
\draw[black] (d)--(k);
\draw[black] (e)--(i);
\draw[black] (e)--(k);
\draw[black] (f)--(h);
\draw[black] (f)--(l);
\draw[black] (g)--(j);
\draw[black] (h)--(j);
\draw[black] (i)--(l);
\draw[black] (j)--(max);
\draw[black] (k)--(max);
\draw[black] (l)--(max);

\end{tikzpicture}
  \subcaption*{$A2--A2'$}
  \end{minipage} &
  
  \begin{minipage}[t]{0.16\linewidth}
  \centering
  \begin{tikzpicture}[scale=0.3]
\node[name=min,draw,inner sep=2pt,circle] at (0,-12) {};
\node[name=a,draw,inner sep=2pt,circle,fill=black] at (-2,-10) {};
\node[name=b,draw,inner sep=2pt,circle] at (0,-10) {};
\node[name=c,draw,inner sep=2pt,circle,fill=black] at (2,-10) {};
\node[name=d,draw,inner sep=2pt,circle] at (-2,-8) {};
\node[name=e,draw,inner sep=2pt,circle] at (0,-8) {};
\node[name=f,draw,inner sep=2pt,circle] at (-2,-6) {};
\node[name=g,draw,inner sep=2pt,circle] at (2,-6) {};
\node[name=h,draw,inner sep=2pt,circle] at (0,-4) {};
\node[name=i,draw,inner sep=2pt,circle] at (2,-4) {};
\node[name=j,draw,inner sep=2pt,circle] at (-2,-2) {};
\node[name=k,draw,inner sep=2pt,circle] at (0,-2) {};
\node[name=l,draw,inner sep=2pt,circle] at (2,-2) {};
\node[name=max,draw,inner sep=2pt,circle] at (0,0) {};

\draw[black] (min)--(a);
\draw[black] (min)--(b);
\draw[black] (min)--(c);
\draw[black] (a)--(d);
\draw[black] (a)--(h);
\draw[black] (b)--(e);
\draw[black] (b)--(g);
\draw[black] (c)--(e);
\draw[black] (c)--(j);
\draw[black] (d)--(f);
\draw[black] (d)--(g);
\draw[black] (e)--(l);
\draw[black] (f)--(i);
\draw[black] (f)--(k);
\draw[black] (g)--(i);
\draw[black] (h)--(j);
\draw[black] (h)--(k);
\draw[black] (i)--(l);
\draw[black] (j)--(max);
\draw[black] (k)--(max);
\draw[black] (l)--(max);

\end{tikzpicture}
  \subcaption*{$B3--B3'$}
  \end{minipage} &

  \begin{minipage}[t]{0.16\linewidth}
  \centering
\begin{tikzpicture}[scale=0.3]
\node[name=min,draw,inner sep=2pt,circle] at (0,-10) {};
\node[name=a,draw,inner sep=2pt,circle,fill=black] at (-2,-8) {};
\node[name=b,draw,inner sep=2pt,circle] at (0,-8) {};
\node[name=c,draw,inner sep=2pt,circle] at (2,-8) {};
\node[name=d,draw,inner sep=2pt,circle] at (-2,-6) {};
\node[name=e,draw,inner sep=2pt,circle] at (0,-6) {};
\node[name=f,draw,inner sep=2pt,circle,fill=black] at (2,-6) {};
\node[name=g,draw,inner sep=2pt,circle] at (-2,-4) {};
\node[name=h,draw,inner sep=2pt,circle] at (0,-4) {};
\node[name=i,draw,inner sep=2pt,circle] at (2,-4) {};
\node[name=j,draw,inner sep=2pt,circle] at (-2,-2) {};
\node[name=k,draw,inner sep=2pt,circle] at (0,-2) {};
\node[name=l,draw,inner sep=2pt,circle] at (2,-2) {};
\node[name=max,draw,inner sep=2pt,circle] at (0,0) {};

\draw[black] (min)--(a);
\draw[black] (min)--(b);
\draw[black] (min)--(c);
\draw[black] (a)--(d);
\draw[black] (a)--(e);
\draw[black] (b)--(f);
\draw[black] (b)--(g);
\draw[black] (c)--(d);
\draw[black] (c)--(h);
\draw[black] (d)--(i);
\draw[black] (e)--(g);
\draw[black] (e)--(j);
\draw[black] (f)--(h);
\draw[black] (f)--(k);
\draw[black] (g)--(k);
\draw[black] (h)--(l);
\draw[black] (i)--(j);
\draw[black] (i)--(l);
\draw[black] (j)--(max);
\draw[black] (k)--(max);
\draw[black] (l)--(max);

\end{tikzpicture}
  \subcaption*{$C2--C2'$}
  \end{minipage} &

  \begin{minipage}[t]{0.16\linewidth}
  \centering
\begin{tikzpicture}[scale=0.3]
\node[name=min,draw,inner sep=2pt,circle] at (0,-10) {};
\node[name=a,draw,inner sep=2pt,circle] at (-2,-8) {};
\node[name=b,draw,inner sep=2pt,circle] at (0,-8) {};
\node[name=c,draw,inner sep=2pt,circle,fill=black] at (2,-8) {};
\node[name=d,draw,inner sep=2pt,circle] at (-2,-6) {};
\node[name=e,draw,inner sep=2pt,circle] at (0,-6) {};
\node[name=f,draw,inner sep=2pt,circle] at (2,-6) {};
\node[name=g,draw,inner sep=2pt,circle,fill=black] at (-2,-4) {};
\node[name=h,draw,inner sep=2pt,circle] at (0,-4) {};
\node[name=i,draw,inner sep=2pt,circle] at (2,-4) {};
\node[name=j,draw,inner sep=2pt,circle] at (-2,-2) {};
\node[name=k,draw,inner sep=2pt,circle] at (0,-2) {};
\node[name=l,draw,inner sep=2pt,circle] at (2,-2) {};
\node[name=max,draw,inner sep=2pt,circle] at (0,0) {};

\draw[black] (min)--(a);
\draw[black] (min)--(b);
\draw[black] (min)--(c);
\draw[black] (a)--(e);
\draw[black] (a)--(h);
\draw[black] (b)--(e);
\draw[black] (b)--(i);
\draw[black] (c)--(d);
\draw[black] (c)--(f);
\draw[black] (d)--(i);
\draw[black] (d)--(k);
\draw[black] (e)--(g);
\draw[black] (f)--(h);
\draw[black] (f)--(k);
\draw[black] (g)--(j);
\draw[black] (g)--(l);
\draw[black] (h)--(j);
\draw[black] (i)--(l);
\draw[black] (j)--(max);
\draw[black] (k)--(max);
\draw[black] (l)--(max);

\end{tikzpicture}
  \subcaption*{$C2--C2$}
  \end{minipage} &
  
  \begin{minipage}[t]{0.16\linewidth}
  \centering
\begin{tikzpicture}[scale=0.3]
\node[name=min,draw,inner sep=2pt,circle] at (0,-12) {};
\node[name=a,draw,inner sep=2pt,circle,fill=black] at (-2,-10) {};
\node[name=b,draw,inner sep=2pt,circle] at (0,-10) {};
\node[name=c,draw,inner sep=2pt,circle,fill=black] at (2,-10) {};
\node[name=d,draw,inner sep=2pt,circle] at (-2,-8) {};
\node[name=e,draw,inner sep=2pt,circle] at (0,-8) {};
\node[name=f,draw,inner sep=2pt,circle] at (-2,-6) {};
\node[name=g,draw,inner sep=2pt,circle] at (0,-6) {};
\node[name=h,draw,inner sep=2pt,circle] at (-2,-4) {};
\node[name=i,draw,inner sep=2pt,circle] at (0,-4) {};
\node[name=j,draw,inner sep=2pt,circle] at (-2,-2) {};
\node[name=k,draw,inner sep=2pt,circle] at (0,-2) {};
\node[name=l,draw,inner sep=2pt,circle] at (2,-2) {};
\node[name=max,draw,inner sep=2pt,circle] at (0,0) {};

\draw[black] (min)--(a);
\draw[black] (min)--(b);
\draw[black] (min)--(c);
\draw[black] (a)--(d);
\draw[black] (a)--(g);
\draw[black] (b)--(e);
\draw[black] (b)--(f);
\draw[black] (c)--(e);
\draw[black] (c)--(l);
\draw[black] (d)--(f);
\draw[black] (d)--(i);
\draw[black] (e)--(j);
\draw[black] (f)--(h);
\draw[black] (g)--(i);
\draw[black] (g)--(l);
\draw[black] (h)--(j);
\draw[black] (h)--(k);
\draw[black] (i)--(k);
\draw[black] (j)--(max);
\draw[black] (k)--(max);
\draw[black] (l)--(max);

\end{tikzpicture}
  \subcaption*{$B2--D1$}
  \end{minipage}
\end{tabular}
\caption{Gluings that do not form a lattice}
\label{Gluings that do not form a lattice}
\end{figure}

On the other hand, it follows from a straightforward computation that $A2--B3'$, $B2--C4'$ are lattices.
Also their opposites are lattices.
\begin{figure}[htbp]
\centering
\begin{tabular}{cc}
  \begin{minipage}[t]{0.45\linewidth}
  \centering
  \begin{tikzpicture}[scale=0.5]

\node[name=min,draw,inner sep=2pt,circle] at (0,-12) {};
\node[name=a,draw,inner sep=2pt,circle] at (-2,-10) {2};
\node[name=b,draw,inner sep=2pt,circle] at (0,-10) {};
\node[name=c,draw,inner sep=2pt,circle] at (2,-10) {1'};
\node[name=d,draw,inner sep=2pt,circle] at (2,-8) {};
\node[name=e,draw,inner sep=2pt,circle] at (4,-8) {};
\node[name=f,draw,inner sep=2pt,circle] at (2,-6) {};
\node[name=g,draw,inner sep=2pt,circle] at (0,-4) {};
\node[name=h,draw,inner sep=2pt,circle] at (2,-4) {};
\node[name=i,draw,inner sep=2pt,circle] at (4,-4) {};
\node[name=j,draw,inner sep=2pt,circle] at (-2,-2) {1'};
\node[name=k,draw,inner sep=2pt,circle] at (0,-2) {};
\node[name=l,draw,inner sep=2pt,circle] at (2,-2) {2};
\node[name=max,draw,inner sep=2pt,circle] at (0,0) {};

\draw[black] (min)--(a);
\draw[black] (min)--(b);
\draw[black] (min)--(c);
\draw[black] (a)--(g);
\draw[black] (a)--(j);
\draw[black] (b)--(h);
\draw[black] (b)--(j);
\draw[black] (c)--(d);
\draw[black] (c)--(e);
\draw[black] (d)--(f);
\draw[black] (d)--(g);
\draw[black] (e)--(h);
\draw[black] (e)--(i);
\draw[black] (f)--(i);
\draw[black] (f)--(k);
\draw[black] (g)--(k);
\draw[black] (h)--(l);
\draw[black] (i)--(l);
\draw[black] (j)--(max);
\draw[black] (k)--(max);
\draw[black] (l)--(max);

\end{tikzpicture}
  \caption{$A2--B3'$}
  \label{fig:BadCase1}
  \end{minipage} &
  
  \begin{minipage}[t]{0.45\linewidth}
  \centering
  \begin{tikzpicture}[scale=0.5]

\node[name=min,draw,inner sep=2pt,circle] at (0,-12) {};
\node[name=a,draw,inner sep=2pt,circle] at (-2,-10) {};
\node[name=b,draw,inner sep=2pt,circle] at (0,-10) {};
\node[name=c,draw,inner sep=2pt,circle] at (2,-10) {1};
\node[name=d,draw,inner sep=2pt,circle] at (0,-8) {};
\node[name=e,draw,inner sep=2pt,circle] at (2,-8) {};
\node[name=f,draw,inner sep=2pt,circle] at (-2,-6) {};
\node[name=g,draw,inner sep=2pt,circle] at (0,-6) {};
\node[name=h,draw,inner sep=2pt,circle] at (0,-4) {};
\node[name=i,draw,inner sep=2pt,circle] at (2,-4) {};
\node[name=j,draw,inner sep=2pt,circle] at (-2,-2) {1};
\node[name=k,draw,inner sep=2pt,circle] at (0,-2) {};
\node[name=l,draw,inner sep=2pt,circle] at (2,-2) {};
\node[name=max,draw,inner sep=2pt,circle] at (0,0) {};

\draw[black] (min)--(a);
\draw[black] (min)--(b);
\draw[black] (min)--(c);
\draw[black] (a)--(f);
\draw[black] (a)--(g);
\draw[black] (b)--(d);
\draw[black] (b)--(j);
\draw[black] (c)--(d);
\draw[black] (c)--(e);
\draw[black] (d)--(l);
\draw[black] (e)--(g);
\draw[black] (e)--(i);
\draw[black] (f)--(h);
\draw[black] (f)--(j);
\draw[black] (g)--(h);
\draw[black] (h)--(k);
\draw[black] (i)--(k);
\draw[black] (i)--(l);
\draw[black] (j)--(max);
\draw[black] (k)--(max);
\draw[black] (l)--(max);

\end{tikzpicture}
  \caption{$B2--C4'$}
  \label{fig:BadCase2}
  \end{minipage}
\end{tabular}
\end{figure}

  \end{proof}
\end{proposition}

\begin{proposition}
\cref{fig:BadCase1} has two mutations, and is mutated into \cref{fig:BadCase2} or the opposite of \cref{fig:BadCase1}.
\cref{fig:BadCase2} has one mutation, and is mutated into \cref{fig:BadCase1}.
  \begin{proof}
    This can be checked directly.
    \cref{fig:BadCase1,fig:BadCase2} exhibit the atom (resp.~coatom) corresponding to a mutation.
  \end{proof}
\end{proposition}

\cref{fig:BadCase1,fig:BadCase2} are 3-regular but not locally mutable, then there are counterexamples to the converse of \cref{conj:mutable implies N-regular}.

\vspace{5mm}
\noindent {\bf{Acknowledgments.}} This research was supported by the Science Tokyo Support Program for Doctoral Students, funded by the Universities for International Research Excellence.

This paper is based on the author's master's thesis, completed under the supervision of Hironori Oya. The author is deeply grateful to Hironori Oya for his continuous and kind guidance throughout the course of this research.

The author would like to express sincere gratitude to Sota Asai, Baptiste Rognerud, and Nathan Reading for their invaluable advice and helpful comments on this research.

\end{document}